\documentclass[11pt]{amsart}

\voffset=5mm
\oddsidemargin=5pt \evensidemargin=5pt
\headheight=19pt     \topmargin=-24pt
\textheight=635pt   \textwidth=463.pt

\usepackage{amsmath,amsfonts,amssymb,amsthm,mathtools}

\usepackage{verbatim}
\usepackage{amsmath,amsfonts}
\usepackage[mathscr]{euscript} 
\usepackage{amsthm}
\usepackage{blkarray,bigstrut} 
\usepackage{url}

\usepackage{graphicx} 

\usepackage{enumitem} 

\usepackage{hyperref} 
\hypersetup{colorlinks} 

\usepackage[colorinlistoftodos]{todonotes}

\RequirePackage{cleveref}
\usepackage{hypcap}
\hypersetup{colorlinks=true, citecolor=darkblue, linkcolor=darkblue}
\definecolor{darkblue}{rgb}{0.0,0,0.7}
\newcommand{\darkblue}{\color{darkblue}}

\definecolor{darkred}{rgb}{0.68,0,0}
\newcommand{\darkred}{\color{darkred}}

\definecolor{darkgreen}{rgb}{0,.38,0}
\newcommand{\darkgreen}{\color{darkgreen}}

\newcommand{\defn}[1]{\emph{\darkblue #1}}
\newcommand{\defna}[1]{\emph{\darkred #1}}
\newcommand{\defnb}[1]{\emph{\darkblue #1}}
\newcommand{\defng}[1]{\emph{\darkgreen #1}}




\setlist[enumerate]{
	label=\textnormal{({\roman*})},
	ref={\roman*}}

\makeatletter
\def\th@plain{%
	\thm@notefont{}
	\itshape 
}
\def\th@definition{%
	\thm@notefont{}
	\normalfont 
}
\makeatother


\makeatletter
\def\fdsy@scale{1}
\newcommand\fdsy@mweight@normal{Book}
\newcommand\fdsy@mweight@small{Book}
\newcommand\fdsy@bweight@normal{Medium}
\newcommand\fdsy@bweight@small{Medium}

\DeclareFontFamily{U}{FdSymbolB}{}
\DeclareFontShape{U}{FdSymbolB}{m}{n}{
	<-7.1> s * [\fdsy@scale] FdSymbolB-\fdsy@mweight@small
	<7.1-> s * [\fdsy@scale] FdSymbolB-\fdsy@mweight@normal
}{}
\DeclareFontShape{U}{FdSymbolB}{b}{n}{
	<-7.1> s * [\fdsy@scale] FdSymbolB-\fdsy@bweight@small
	<7.1-> s * [\fdsy@scale] FdSymbolB-\fdsy@bweight@normal
}{}

\makeatother

\DeclareSymbolFont{fdrelations}{U}{FdSymbolB}{m}{n}
\SetSymbolFont{fdrelations}{bold}{U}{FdSymbolB}{b}{n}

\DeclareMathSymbol{\lescc}{\mathrel}{fdrelations}{66}


\newtheorem{thm}{Theorem}[section]
\newtheorem{lemma}[thm]{Lemma}

\newtheorem{cor}[thm]{Corollary}
\newtheorem{prop}[thm]{Proposition}
\newtheorem{conj}[thm]{Conjecture}

\theoremstyle{definition}
\newtheorem{ex}[thm]{Example}

\newtheorem{rem}[thm]{Remark}

\numberwithin{figure}{section}
\numberwithin{equation}{section}


\def\bd{\triangledown}

\def\nn{\mathbb N}

\def\rr{\mathbb R}

\def\sm{\smallsetminus}

\def\la{\lambda}
\def\ga{\gamma}
\def\si{\sigma}

\def\ep{\varepsilon}
\def\al{\alpha}
\def\be{\beta}
\def\om{\omega}

\def\cB{\mathcal B}

\def\cF{\mathcal F}

\def\cM{\mathcal{M}}

\def\cP{\mathcal P}

\def\ssu{\subset}

\def\<{\langle}
\def\>{\rangle}

\def\rk{\textnormal{rk}}

\def\vt{\vartriangleleft}
\def\vte{\trianglelefteq}

\def\0{{\mathbf 0}}
\def\id{{\rm id}}

\def\.{\hskip.06cm}
\def\ts{\hskip.03cm}

\def\lra{\leftrightarrow}

\def\di{{\small{\ts\diamond\ts}}}

\def\ze{{\zeta}}

\newcommand{\sort}{\mathrm{sort}}

\newcommand{\SYT}{\operatorname{{\rm SYT}}}

\def\nin{\noindent}

\def\SP{{\textsc{\#P}}}

\def\SP{{\textsc{\#{}P}}}

\def\Pb{{\mathbb{P}}}
\def\Eb{{\mathbb{E}}}
\def\Varb{{\text{\sc Var}}}
\def\Covb{{\text{\sc Cov}}}

\def\aF{\textrm{F}}

\def\ah{\textrm{h}}

\def\aM{\textrm{M}}

\def\aMr{\textrm{\em M}}
\def\aN{\textrm{N}}
\def\aNr{\textrm{\em N}}

\def\aQ{\textrm{Q}}


\def\av{\textrm{v}}

\def\bM{\textbf{\textrm{M}}\hskip-0.03cm{}}

\def\bMr{\textbf{\textrm{\em M}}\hskip-0.03cm{}}







\DeclareMathOperator{\Bc}{\mathcal{B}}

\DeclareMathOperator{\Cc}{\mathcal{C}}

\DeclareMathOperator{\Cov}{\textnormal{Cov}}
\DeclareMathOperator{\Dc}{\mathcal{D}}

\DeclareMathOperator{\Ec}{\mathcal{E}} 





\DeclareMathOperator{\Rb}{\mathbb{R}}




\newcommand{\mn}{{{\text{min}}}} 

\DeclareMathOperator{\hb}{\mathbf{h}}

\DeclareMathOperator{\vb}{\mathbf{v}}

\DeclareMathOperator{\xb}{\mathbf{x}}

\DeclareMathOperator{\yb}{\mathbf{y}}
\DeclareMathOperator{\zb}{\mathbf{z}}

 \def\precc{\preccurlyeq} 





\newcommand{\down}{\textnormal{down}}
\newcommand{\up}{\textnormal{up}}

\newcommand{\pa}{a} 



\newcommand{\Mf}{\mathscr{M}}



\title{Correlation inequalities for linear extensions}

\date{\today}

%

\author[Swee Hong Chan \and Igor Pak]{\ Swee Hong Chan$^{\star}$ \ \. \and \ \. Igor~Pak$^{\di}$}

\thanks{\thinspace ${\hspace{-.45ex}}^\star$Department of Mathematics,
Rutgers University, Piscataway, NJ, 08854.
 \.  Email: \ts \texttt{sweehong.chan@rutgers.edu}}

\thanks{\thinspace ${\hspace{-.66ex}}^\di$Department of Mathematics,
UCLA, Los Angeles, CA, 90095. \.  Email: \ts \texttt{{pak}@math.ucla.edu}}

\subjclass[2020]{05A20 (Primary),  06A07, 60C05 (Secondary).}

\keywords{poset, linear extension, combinatorial atlas, correlation inequality, log-concavity, standard Young tableau}


\begin{document}

\begin{abstract}
We employ the combinatorial atlas technology to prove new correlation
inequalities for the number of linear extensions of finite posets.
These include the approximate independence of probabilities and
expectations of values of random linear extensions, closely related
to Stanley's inequality.  We also give applications to the numbers
of standard Young tableaux and to Euler numbers.
\end{abstract}

	\maketitle

\section{Introduction} \label{s:intro}

If you really want to hear about it, the first thing you'll probably want to know
about \emph{Poset Theory} \ts is that it was born and languished for decades without
any tools whatsoever.  It is still a question whether the area has caught up with
the other, more established parts of Combinatorics, but by now it is definitely
in possession of powerful tools of various nature, which makes it at least somewhat
prominent, if not prosperous.

In this paper we obtain a melange of new correlation inequalities
for the number of linear extensions of finite posets
using the \defna{combinatorial atlas} \ts technology \cite{CP1,CP2}.
This is the tool we recently developed and whose power has yet to be
fully exploited.
Viewed individually, each new inequality shows that there is more to the story.
Taken together, these inequalities offer a glimpse at the big world of poset
inequalities that we are just beginning to understand.

\smallskip

\subsection{Linear extensions}\label{ss:intro-LE}
Let \ts $P=(X,\prec)$ \ts be a \defnb{partially ordered set}
on the ground set $X$ of size $|X|=n$, and with the partial order~``$\prec$''.
A \defn{linear extension} of $P$ is a bijection \ts $f: X \to \{1,\ldots,n\}$ \ts
that is order-preserving: \ts $x \prec y$ \ts implies \ts $f(x)<f(y)$, for all \ts
$x,y\in X$.
We denote by \ts $\Ec(P)$ \ts the set of linear extensions of $P$, and by \ts
$e(P):=|\Ec(P)|$ \ts the number of linear extensions.

An element \ts $x\in X$ \ts is \defn{minimal} \ts if there is no \ts
$y\in X$ \ts such that $y \prec x$.
Denote by \ts $\min(P) \subseteq X$ \ts the subset of minimal elements in~$P$.
We use \ts $C_n$ \ts and \ts $A_n$ \ts to denote the $n$-element \defn{chain} \ts
and \defn{antichain}, respectively.
For an element \ts $x \in X$, we denote by \ts $P-x$ \ts the induced subposet of~$P$
on the subset \ts $X-x$. See Section~\ref{s:notation} for further poset notations.

\smallskip

\begin{thm}[{\rm cf.\ Theorem~\ref{t:poset-corr-deletion-strong}}{}]
\label{t:poset-corr-deletion}
Let \ts $P=(X,\prec)$ \ts be a poset with \ts $|X|=n> 2$ \ts elements.
Let \ts $x,y\in \min(P)$ \ts  be distinct minimal elements of~$P$. Then:
\begin{equation}\label{eq:poset-corr-tight}
  \frac{n}{n-1} \ \leq \ \frac{e(P) \.\cdot\. e(P-x-y)}{e(P-x) \.\cdot\. e(P-y)}  \ \leq \ 2 \ts .
\end{equation}
\end{thm}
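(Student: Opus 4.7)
My plan is to use the combinatorial atlas technology of \cite{CP1, CP2} together with a combinatorial identity that reduces the four counts in the ratio to moments of a pair of statistics on $\Ec(P-x-y)$.

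First, I reformulate probabilistically. For $f$ uniformly random on $\Ec(P)$, minimality of $x, y$ yields $\Pr[f(x)=1] = e(P-x)/e(P)$, $\Pr[f(y)=1] = e(P-y)/e(P)$, and $\Pr[f(x)=1,\,f(y)=2] = e(P-x-y)/e(P)$. The target ratio therefore equals
\[
\frac{\Pr[f(x)=1,\,f(y)=2]}{\Pr[f(x)=1]\,\Pr[f(y)=1]} \;=\; \frac{e(P-x-y)/e(P-x)}{e(P-y)/e(P)},
\]
i.e.\ the ratio of the probability that $y$ is first in a uniform linear extension of $P-x$ to that in $P$; the theorem says that removing the minimal competitor $x$ multiplies $y$'s chance of being first by a factor in $[n/(n-1),\,2]$.

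Next, I establish a combinatorial identity on $\Ec(P-x-y)$. For each $g \in \Ec(P-x-y)$, let $\alpha(g) \in \{1,\ldots,n-1\}$ denote the position in the sequence $g$ of the first element above $y$ in $P$ (set to $n-1$ if no such element appears in $g$), and define $\beta(g)$ symmetrically with respect to $x$. Classifying each $f \in \Ec(P)$ by the restriction $g := f|_{X\setminus\{x,y\}}$ and by the insertion positions of $x$ and $y$, one obtains
\[
e(P-x) \;=\; \sum_g \alpha(g), \quad e(P-y) \;=\; \sum_g \beta(g), \quad e(P) \;=\; \sum_g \bigl(\alpha(g)\beta(g) + \min(\alpha(g),\beta(g))\bigr),
\]
with the $\min$-term accounting for the two linear extensions of $P$ that differ only by a swap of adjacent $x, y$. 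Setting $m := e(P-x-y)$ and treating $\alpha, \beta$ as random variables under the uniform measure on $\Ec(P-x-y)$, the target ratio becomes $R = (\Eb[\alpha\beta] + \Eb[\min(\alpha,\beta)])/(\Eb[\alpha]\,\Eb[\beta])$.

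Both bounds then reduce to correlation estimates on $(\alpha, \beta)$. For the lower bound, the pointwise inequality $\min(\alpha,\beta) \geq \alpha\beta/(n-1)$ (valid since $\max(\alpha,\beta) \leq n-1$) gives $R \geq (n/(n-1))\,\Eb[\alpha\beta]/(\Eb[\alpha]\Eb[\beta])$, so it suffices to establish the positive correlation $m\,\Eb[\alpha\beta] \geq \Eb[\alpha]\,\Eb[\beta]$. For the upper bound, the map $f \mapsto (f|_{P-x}, f|_{P-y})$ is at most $2$-to-$1$, with fibers of size exactly $2$ at those $f$ where $x, y$ are adjacent, giving $e(P) \leq 2\sum_g \alpha(g)\beta(g)$, so it suffices to establish the reverse correlation $m\,\Eb[\alpha\beta] + m\,\Eb[\min(\alpha,\beta)] \leq 2\,\Eb[\alpha]\,\Eb[\beta]$. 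Neither correlation estimate holds for arbitrary integer statistics on a finite set --- the poset structure of $\Ec(P-x-y)$ is essential --- so I would supply both using the combinatorial atlas machinery, which encodes these statistics in a hyperbolic $2 \times 2$ matrix whose principal-minor (Alexandrov-type) inequality produces the required correlation bounds, as in the framework of \cite{CP1}.

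The main obstacle is calibrating the atlas so that both sharp constants emerge from a single construction: the lower bound $n/(n-1)$ is tight on antichains ($\alpha \equiv \beta \equiv n-1$), while the upper bound $2$ is tight when every element of $P-x-y$ lies above both $x$ and $y$ ($\alpha \equiv \beta \equiv 1$). The matrix entries must be designed so that its hyperbolicity simultaneously certifies both extremes and interpolates between them across all intermediate poset configurations.
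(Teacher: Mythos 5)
Your combinatorial identity $e(P)=\sum_g\bigl(\alpha(g)\beta(g)+\min(\alpha(g),\beta(g))\bigr)$ (with $e(P-x)=\sum_g\alpha(g)$, $e(P-y)=\sum_g\beta(g)$) is correct — it is the same insertion count the paper uses in the proof of Theorem~\ref{t:poset-covariances-multiple}. But the reduction you build on top of it has a genuine error on the lower-bound side, and it points in the wrong direction for the upper bound too.

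For the lower bound you use the pointwise estimate $\min(\alpha,\beta)\ge\alpha\beta/(n-1)$ to conclude that it suffices to show the \emph{positive} correlation $\Eb[\alpha\beta]\ge\Eb[\alpha]\Eb[\beta]$. That inequality is false. Take $P=C_2+C_2$ with $x\prec u$ and $y\prec v$. Then $\Ec(P-x-y)=\{uv,vu\}$, $(\alpha,\beta)\in\{(2,1),(1,2)\}$ with equal probability, so $\Eb[\alpha\beta]=2<9/4=\Eb[\alpha]\Eb[\beta]$; the statistics are \emph{negatively} correlated. Nevertheless the target ratio equals $\frac{6\cdot 2}{3\cdot 3}=4/3=n/(n-1)$, so the lower bound holds with equality. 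Your route would only yield $R\ge\frac{n}{n-1}\cdot\frac{\Eb[\alpha\beta]}{\Eb[\alpha]\Eb[\beta]}=\frac{32}{27}<\frac43$, which does not prove the claim. The problem is structural, not just a missing lemma: after dropping $\min\le\alpha\beta$, the upper bound reduces to $\Eb[\alpha\beta]\le\Eb[\alpha]\Eb[\beta]$, while your lower bound reduction needs the reverse inequality — these cannot both be a consequence of the same hyperbolic certificate, and the examples show neither has a fixed sign.

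The paper does something quite different. The upper bound is proved in Theorem~\ref{t:poset-corr-deletion-strong}: one applies Proposition~\ref{prop:M} to build a $2(n-1)\times 2(n-1)$ hyperbolic matrix $\bM(P',\pa,k)$ on the auxiliary poset $P'=P+\{\pa\}$, and Lemma~\ref{lem:atlas-corr-half} (the inequality $\bM_{\zb\zb}\bM_{\xb\yb}\le 2\bM_{\xb\zb}\bM_{\yb\zb}$, with $\xb,\yb$ the characteristic vectors of $x,y$ and $\zb$ that of $Z_{\up}$) yields $e(P)\,e(P-x-y)\le 2\,e(P-x)\,e(P-y)$ directly — no detour through covariances of $\alpha,\beta$. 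The lower bound is not proved by the atlas at all; it is the special case $A=X-x$, $B=X-y$ of Fishburn's inequality \eqref{eq:poset-Fish}, which comes from FKG. So besides the false correlation claim, your plan to extract both sharp constants from a single $2\times 2$ hyperbolic matrix does not match any mechanism in the paper and is unlikely to work as stated: a $2\times 2$ hyperbolic matrix supplies only one determinant inequality, whereas the two bounds here come from two independent sources (Lemma~\ref{lem:atlas-corr-half} on a large matrix, and FKG).

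Two smaller issues: the factors of $m$ in ``$m\,\Eb[\alpha\beta]+m\,\Eb[\min(\alpha,\beta)]\le 2\,\Eb[\alpha]\,\Eb[\beta]$'' are spurious (the correct statement is $\Eb[\alpha\beta]+\Eb[\min(\alpha,\beta)]\le 2\,\Eb[\alpha]\,\Eb[\beta]$, equivalent to the upper bound itself, so nothing has been reduced); and the two-to-one map observation $e(P)\le 2\sum_g\alpha\beta$ is just the pointwise bound $\min\le\alpha\beta$ restated, so invoking it does not bring you closer to the needed negative-correlation estimate.
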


\smallskip

This correlation inequality is the most natural
and the simplest to state.
The lower bound in~\eqref{eq:poset-corr-tight} is a special case of the
\defng{Fishburn inequality} (see $\S$\ref{ss:finrem-Fish}), while the upper
bound is new and is a special case of the upper bound in
Theorem~\ref{t:poset-corr-deletion-strong}.
%
Note that the lower bound is tight for \ts $P=A_n$ \ts and
the upper bounds is tight for the linear sum \ts $P=A_2\oplus C_{n-2}$\..


\smallskip

Correlation inequalities are best understood in probabilistic notation.
Let \ts $\Pb$ \ts and \ts $\Eb$ \ts be taken over the uniform random
linear extension \ts $f\in\Ec(P)$.
The inequality~\eqref{eq:poset-corr-tight} can be rewritten as
\begin{equation}\label{eq:poset-corr-tight-probab}
  \frac{n}{n-1} \ \leq \ \frac{\Pb[f(x)=1, f(y)=2]}{\Pb[f(x)=1] \.\cdot\. \Pb[f(y)=1]}  \ \leq \ 2.
\end{equation}

\smallskip

The following theorem gives a similar upper bound for the covariances:

\smallskip

\begin{thm}[{\rm see~$\S$\ref{ss:proof-poset-covariances}}{}]\label{t:poset-covariances}
Let \ts $P=(X,\prec)$ \ts be a finite poset, and let \ts $x,y \in X$ \ts
be fixed poset elements. Then:
\begin{equation}\label{eq:poset-cov}
\frac{\Eb[f(x) \ts f(y)]  \. + \. \Eb\big[\min\{f(x),f(y)\}\big]}{\Eb[f(x)] \.\cdot\. \Eb[f(y)]} \ \le \, 2 \ts.
\end{equation}
\end{thm}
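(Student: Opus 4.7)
The plan is to deduce Theorem~\ref{t:poset-covariances} from Theorem~\ref{t:poset-corr-deletion} by an auxiliary poset construction that converts both sides of~\eqref{eq:poset-cov} into honest counts of linear extensions, so that the required inequality is precisely the upper bound~\eqref{eq:poset-corr-tight} applied to a bigger poset.

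First I would introduce two fresh ``shadow'' elements $u,v \notin X$ and consider three enlargements of $P$: the poset $P^u$ on $X \cup \{u\}$ whose only new relation is $u \prec x$; the poset $P^v$ on $X \cup \{v\}$ whose only new relation is $v \prec y$; and the poset $\widetilde P$ on $X \cup \{u,v\}$ whose only new relations are $u \prec x$ and $v \prec y$ (in particular $u,v$ are incomparable to each other, and $u,v$ are both minimal in $\widetilde P$).

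Next I would establish the three counting identities
\begin{equation*}
e(P^u) \, = \, e(P)\cdot \Eb[f(x)], \qquad e(P^v) \, = \, e(P)\cdot \Eb[f(y)],
\end{equation*}
\begin{equation*}
e(\widetilde P) \, = \, e(P)\cdot \Eb\bigl[f(x)\,f(y) + \min\{f(x),f(y)\}\bigr].
\end{equation*}
The first two are standard slot-insertion: given $f \in \Ec(P)$ with $f(x) = k$, the element $u$ can be placed into any of the $k$ slots at or before position~$k$, producing each linear extension of $P^u$ exactly once. The third identity is the crux of the proof. Fix $f \in \Ec(P)$ with $f(x) = k$ and $f(y) = l$; to extend $f$ to a linear extension of $\widetilde P$, insert $u$ into one of the first $k$ slots of $f$ and $v$ into one of the first $l$ slots. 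There are $kl - \min(k,l)$ ways in which $u$ and $v$ land in different slots, and $\min(k,l)$ ways in which they share a slot, each contributing two orderings of $u,v$. Adding gives $kl + \min(k,l)$, and summing over $f \in \Ec(P)$ yields the identity.

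Finally, since $u$ and $v$ are distinct minimal elements of $\widetilde P$ and $|\widetilde P| = n+2 \ge 3$, Theorem~\ref{t:poset-corr-deletion} applied to $\widetilde P$ at $u,v$ gives
\begin{equation*}
e(\widetilde P)\cdot e(\widetilde P - u - v) \ \le \ 2\, e(\widetilde P - u)\cdot e(\widetilde P - v).
\end{equation*}
Observing that $\widetilde P - u - v = P$, $\widetilde P - u = P^v$, and $\widetilde P - v = P^u$, and substituting the three identities above, the inequality becomes
\begin{equation*}
e(P)^2 \cdot \Eb\bigl[f(x) f(y) + \min\{f(x),f(y)\}\bigr] \ \le \ 2\, e(P)^2 \cdot \Eb[f(x)]\cdot \Eb[f(y)],
\end{equation*}
which upon dividing by $e(P)^2$ is exactly~\eqref{eq:poset-cov}. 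The only subtle point is the third counting identity: the $\min\{f(x),f(y)\}$ correction term in Theorem~\ref{t:poset-covariances} arises precisely from the two orderings of $u,v$ available when they are inserted into a common slot, and this is exactly what aligns the statement with a direct invocation of the upper bound in Theorem~\ref{t:poset-corr-deletion}.
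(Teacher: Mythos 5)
Your proposal is correct and takes essentially the same route as the paper: the paper proves the more general Theorem~\ref{t:poset-covariances-multiple} by adding two new minimal elements below $A$ and below $B$, deriving the same three counting identities (with $f_{\min}(A),f_{\min}(B)$ in place of $f(x),f(y)$), and invoking the upper bound of Theorem~\ref{t:poset-corr-deletion}. Your argument is precisely that construction specialized to the singletons $A=\{x\}$, $B=\{y\}$, which is all that Theorem~\ref{t:poset-covariances} requires.
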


\smallskip
We note that \eqref{eq:poset-cov} is asymptotically tight; see Example~\ref{ex:poset-AK}.
Also
note that it follows from \eqref{eq:poset-cov} that
\begin{align}\label{eq:poset-cov-2}
	\frac{\Covb(f(x), f(y))}{\Eb[f(x)] \.\cdot\. \Eb[f(y)]}  \ = \ \frac{\Eb[f(x) \ts f(y)]}{\Eb[f(x)] \.\cdot\. \Eb[f(y)]} \. - \. 1  \ \leq \  1.
\end{align}
See also~$\S$\ref{ss:app-second} for an application to the second moment estimates.

\medskip

\subsection{Stanley inequality}\label{ss:intro-Stanley}
For linear extensions, one can use both elements and values to set up
correlation inequalities.  The following celebrated result by Stanley
is foundational in the area:

\smallskip

\begin{thm}[{\rm \defn{Stanley inequality}~{\cite[Thm~3.1]{Sta}}}{}]\label{t:poset-Stanley}
Let \ts $P=(X,\prec)$ \ts be a poset with \ts $|X|=n$ \ts elements. Fix \ts
$x\in X$ \ts and let \. $2 \leq k \leq n-1$. Denote by \. $e_k(P)$ \.
the number of linear extensions \ts $f\in \Ec(P)$ \ts such that \. $f(x)=k$.
Then:
\begin{equation}\label{eq:Stanley-thm}
 e_k(P)^2 \ \geq \  e_{k-1}(P) \. \cdot \. e_{k+1}(P).
\end{equation}
\end{thm}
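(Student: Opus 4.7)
My plan is to prove Stanley's inequality using the combinatorial atlas technology from \cite{CP1,CP2}, obtaining a self-contained linear-algebraic proof that bypasses the Aleksandrov--Fenchel inequality on order-polytope slices used in Stanley's original argument. To begin, I would reformulate~\eqref{eq:Stanley-thm} as a hyperbolicity statement: construct a symmetric matrix $\mathbf{M} = \mathbf{M}(P,x)$ whose rows and columns are indexed by admissible positions $k \in \{2,\ldots,n-1\}$, and whose entries count suitable pairs of linear extensions of $P$ decorated by an extra marked element. The matrix would be designed so that the $2\times 2$ principal minor on rows and columns $\{k-1,k+1\}$, together with the diagonal entry $\mathbf{M}_{k,k}$, collectively encode the triple $\bigl(e_{k-1}(P),\, e_k(P),\, e_{k+1}(P)\bigr)$, reducing~\eqref{eq:Stanley-thm} to the claim that $\mathbf{M}$ has at most one positive eigenvalue on a relevant invariant subspace.

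I would then verify this hyperbolicity by induction on $n = |X|$, using a pullback construction analogous to those in \cite{CP1,CP2}. The pullback decomposes $\mathbf{M}_P$ as a nonnegative combination of the smaller matrices $\mathbf{M}_{P-y}$ for $y \in \min(P)\setminus\{x\}$, plus a combinatorially explicit correction term accounting for the case $y = x$, each piece being hyperbolic by the inductive hypothesis. Standard Cauchy interlacing, combined with the fact that nonnegative combinations of forms sharing a common positive direction preserve the one-positive-eigenvalue property, then yields the hyperbolicity of $\mathbf{M}_P$. The base case $n = 3$ is checked by direct enumeration, and the hypothesis $2 \leq k \leq n-1$ is used to ensure that all relevant sub-instances are nontrivial.

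The main obstacle will be designing the entries of $\mathbf{M}$ so that the pullback closes up exactly: the combinatorial weights must be balanced so that the decomposition into smaller atlases leaves behind a provably positive-semidefinite correction rather than a term of indefinite sign. This bookkeeping step is the technical heart of the combinatorial atlas method, and adapting it to Stanley's inequality requires handling the ``level-set'' slicing $\{f : f(x) = k\}$, which is not of the usual minimal-element-deletion form treated in Theorem~\ref{t:poset-corr-deletion}. I expect this adaptation to require augmenting the index set with an auxiliary parameter recording the position of $x$, so that the deletion operation $P \mapsto P - y$ interacts compatibly with the value $f(x)$.
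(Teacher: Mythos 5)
First, a point of orientation: this theorem is stated in the paper as a background result, with attribution to Stanley's original paper, and the present paper does not give its own proof of it; the paper instead points to \cite{CP1} for the linear-algebraic proof and sets up the relevant machinery in Section~\ref{s:proof}. So there is no ``in-paper'' proof against which to check your argument directly, but the matrix $\bM(P,\pa,k)$ of $\S$\ref{ss:proof-setup} together with Proposition~\ref{prop:M} is exactly the engine that proves it, and that is the natural benchmark.

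Against that benchmark, your outline has the right high-level shape (hyperbolicity of a combinatorially defined matrix, proved by induction via deletion of minimal elements, from which the log-concavity drops out), but the matrix you describe is materially different from the one that actually works, and I do not see how yours would close. You index $\bM$ by the admissible positions $k\in\{2,\ldots,n-1\}$ and aim to read off \eqref{eq:Stanley-thm} from a $2\times 2$ principal minor on rows and columns $\{k-1,k+1\}$. The matrix the paper uses is instead indexed by $Z=Z_\up\cup Z_\down$, two disjoint copies of $X-\{\pa\}$, so its size is $2(n-1)$ and its indices are poset \emph{elements}, not positions. It also depends on $k$: one constructs $\bM(P,\pa,k)$ for each fixed $k$, so a single matrix never encodes the full sequence $\bigl(e_j(P)\bigr)_j$. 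The inequality $e_k(P)^2\ge e_{k-1}(P)\,e_{k+1}(P)$ is then extracted not from a $2\times 2$ minor but by applying \eqref{eq:Hyp} with the test vectors $\yb=\mathbf 1_{Z_\down}$ and $\zb=\mathbf 1_{Z_\up}$; the row-sum identities \eqref{eq:M-sum-min}--\eqref{eq:M-sum-max} give $\bM_{\yb\yb}=e_{k+1}(P)$, $\bM_{\zb\zb}=e_{k-1}(P)$, $\bM_{\yb\zb}=e_k(P)$, and \eqref{eq:Hyp} does the rest. The element-indexed structure is precisely what makes the inductive ``pullback'' you mention compatible with deletion: deleting a minimal element removes an index, whereas your position-indexed matrix would have its index set shift as $n$ decreases, which is the obstruction you flag at the end but do not resolve.

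Your closing remark, that the level-set slicing $\{f:f(x)=k\}$ is ``not of the usual minimal-element-deletion form,'' also undersells what the paper's construction achieves: the matrix is deliberately indexed by elements while its \emph{entries} are counts $\aN_{k\pm 1}(\cdot)$ conditioned on $f(\pa)$; this is the mechanism that reconciles level-set slicing with element deletion, and it is not an add-on parameter but the organizing principle of the whole matrix. So the gap is not a small bookkeeping issue: as written, the object you propose to make hyperbolic is not the one that the combinatorial atlas method handles, and the plan to recover \eqref{eq:Stanley-thm} from a $2\times 2$ principal minor of a position-indexed matrix would have to be replaced with the element-indexed, $k$-dependent $\bM(P,\pa,k)$ and the characteristic-vector pairing described above.
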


\smallskip

In probabilistic notation, Stanley's inequality can be restated as
\begin{equation}\label{eq:Stanley}
\Pb[f(x)=k]^2 \  \geq \  \Pb[f(x)=k-1] \. \cdot \. \Pb[f(x)=k+1]\ts.
\end{equation}

\smallskip

\nin
%
The proof in \cite{Sta} used the \defng{Alexandrov--Fenchel
inequality} \ts applied to order polytopes, see $\S$\ref{ss:finrem-Stanley}.
Stanley's inequality was famously extended by Kahn and Saks~\cite{KS}, to a prove a weak version of the
\. \defng{$\frac{1}{3}$--$\frac{2}{3}$ \. Conjecture}, see~$\S$\ref{ss:finrem-1323}.

Stanley inequality~\eqref{eq:Stanley} has remained mysterious until recently,
when the equality conditions have  been established by Shenfeld and van~Handel~\cite{SvH}
by advancing geometric arguments.  In our paper~\cite{CP1}, we gave a linear
algebraic proof of Stanley inequality, the equality conditions, and the
generalization to weighted linear extensions, see also~$\S$\ref{ss:finrem-Stanley}.

\smallskip

Let us single out two slightly
nonstandard general consequences of Stanley inequality:

\smallskip

\begin{cor}[{\rm see $\S$\ref{ss:cor-proof-two-lemmas}}{}]\label{c:poset-Stanley-cdf}
Let \ts $P=(X,\prec)$ \ts be a poset with \ts $|X|=n$ \ts elements. Fix \ts
$x\in X$ \ts and let \. $1 \leq k \leq n-1$. Then:
    \begin{equation}\label{eq:Stanley-cor1}
		  \Pb[f(x)> k]^2 \ \geq \ \Pb[f(x) > k-1] \. \cdot \. \Pb[f(x) > k+1].
	\end{equation}
and
    \begin{equation}\label{eq:Stanley-cor2}
\Pb[f(x)  =  1]  \. \cdot \. \Pb[f(x)>1]  \  \leq \ \Pb[ f(x)  =  2]\ts.
	\end{equation}
\end{cor}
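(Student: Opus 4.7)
The plan is to deduce both inequalities as purely formal consequences of Stanley's log-concavity statement from Theorem~\ref{t:poset-Stanley}. Throughout, write $a_k := \Pb[f(x)=k]$ and $s_k := \Pb[f(x) > k] = \sum_{j > k} a_k$. Since $a_k = e_k(P)/e(P)$, Stanley's inequality~\eqref{eq:Stanley} says precisely that the finite sequence $(a_1,\dots,a_n)$ is log-concave: $a_k^2 \geq a_{k-1} a_{k+1}$ for $2 \leq k \leq n-1$. Equivalently, the ratios $a_{k+1}/a_k$ are nonincreasing on the support of the sequence. It is convenient to set $a_0 := 0$ and $a_{n+1} := 0$, so that $s_0 = 1$ and $s_n = 0$.

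First I would prove~\eqref{eq:Stanley-cor1} by showing that log-concavity transfers from $(a_k)$ to the tail-sum sequence $(s_k)$. A direct computation with $s_{k-1} = s_k + a_k$ and $s_{k+1} = s_k - a_{k+1}$ reduces $s_k^2 \geq s_{k-1} s_{k+1}$ to the single inequality
\begin{equation*}
a_k \. s_{k+1} \ \leq \ a_{k+1} \. s_k\ts.
\end{equation*}
To establish this, I would use log-concavity in the form $a_k\ts a_{j+1} \leq a_{k+1}\ts a_j$ for every $j \geq k$ (which iterates the hypothesis $a_{k+1}/a_k$ is nonincreasing), and then sum over $j \geq k+1$. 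Boundary cases cause no trouble: at $k=n-1$ we have $s_{k+1}=0$, and for $k=1$ the argument uses only ratios at $j \geq 1$. If $a_k = 0$ for some $k$, then by log-concavity the sequence $(a_k)$ is supported on an interval, and the claim is immediate on each side.

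For~\eqref{eq:Stanley-cor2}, I would note it is the identity $a_1(1-a_1) \leq a_2$ after rewriting $\Pb[f(x)>1] = 1-a_1$ and $\Pb[f(x)=2] = a_2$. If $a_1=0$, the inequality is trivial, so assume $a_1>0$. Log-concavity gives $a_{k+1}/a_k \leq a_2/a_1$ for every $k \geq 1$, hence $a_{k+1} \leq (a_2/a_1)\.a_k$. Summing over $k \geq 1$ yields
\begin{equation*}
1 - a_1 \ = \ \sum_{k\geq 2} a_k \ \leq \ \frac{a_2}{a_1}\ts\sum_{k\geq 1} a_k \ = \ \frac{a_2}{a_1}\ts,
\end{equation*}
which rearranges to the claim.

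I do not anticipate a genuine obstacle, as both arguments are short algebraic manipulations of log-concave sequences; the only care required is in verifying the boundary indices $k=1$ and $k=n-1$ and handling the degenerate case $a_1=0$ (i.e., when $x$ is not a minimal element of~$P$).
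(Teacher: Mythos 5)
Your proof is correct and follows essentially the same route as the paper: both inequalities are reduced to elementary facts about log-concave sequences (the paper states them as its Lemma~\ref{l:poset-Stanley-one} and the unnumbered lemma that follows it in~\S\ref{ss:cor-proof-two-lemmas}), and the telescoping/ratio manipulations you carry out match the paper's verbatim up to an index shift (your $s_k=\Pb[f(x)>k]$ versus the paper's $s_k=p_k+\cdots+p_n$) and the fact that the paper states its second lemma as a polynomial inequality and thereby sidesteps dividing by $a_1$.
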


\smallskip

For \ts $k=1$, inequality \eqref{eq:Stanley-cor1} further simplifies to
    \begin{equation}\label{eq:Stanley-cor11}
	 \Pb[f(x)> 1]^2 \ \geq \	\Pb[f(x) > 2]  .
	\end{equation}

\smallskip

%
We propose the following unusual extension of Stanley inequality to general subsets of
the ground set. Fix a nonempty subset $A\subseteq X$.  For a linear extension
\ts $f\in \Ec(P)$, define
\[
f(A) \, := \, \big\{\ts f(x) \, : \, x \in A \ts\big\} \quad \text{and} \quad
f_\mn(A) \, := \, \min \ts f(A).
\]
Note that \ts $f_\mn(A)=f(x)$ \ts for all singletons \ts $A=\{x\}$, where \ts $x\in X$.

\smallskip

\begin{conj}[{\rm\defn{Extended Stanley inequality}\ts}{}]\label{conj:poset-Stanley-subset}
Let \ts $P=(X,\prec)$ \ts be a poset with \ts $|X|=n$ \ts elements. Fix a nonempty
subset \ts
$A \subseteq  X$, and let \. $2 \leq k \leq n-1$. Then:
\begin{equation}\label{eq:Stanley-conj}
     \Pb[f_{\min}(A)=k]^2   \ \geq \  \Pb[f_{\min}(A)=k-1]  \. \cdot \. \Pb[f_{\min}(A)=k+1]\ts.
\end{equation}
\end{conj}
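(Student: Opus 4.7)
The plan is to adapt the combinatorial atlas framework of \cite{CP1}, which yields Theorem~\ref{t:poset-Stanley}, to the subset statistic $f_\mn(A)$. In the single-element case, the atlas produces a hyperbolic quadratic form whose restriction to a canonical vector recovers $e_k(P)^2 \geq e_{k-1}(P) \. e_{k+1}(P)$. For the extended statement, one would build an analogous atlas whose vertices record refined counts of the form $E_k(P, A') := \#\{f \in \Ec(P) : f_\mn(A') = k\}$ as $A'$ varies over certain subsets of~$X$, and whose edge matrices encode how these counts evolve under the deletion of elements or the removal of covering relations.

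As a warm-up and partial result, consider the auxiliary poset $P' := P \cup \{z\}$ obtained by adjoining a new minimal element $z$ with $z \prec a$ for every $a \in A$ and $z$ incomparable to $X\sm A$. A short bijection (insert $z$ at position $k$) shows
\begin{equation*}
  \#\bigl\{g \in \Ec(P') \,:\, g(z) = k \bigr\} \ = \ \#\{f \in \Ec(P) : f_\mn(A) \geq k\} \ =: \ S_k(P, A),
\end{equation*}
so applying the classical Stanley inequality to the element $z \in P'$ yields the log-concavity of the tail sums \ts $S_k(P,A)^2 \geq S_{k-1}(P,A) \. S_{k+1}(P,A)$. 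This is a direct consequence of Theorem~\ref{t:poset-Stanley} but is strictly weaker than Conjecture~\ref{conj:poset-Stanley-subset}: in general, log-concavity of a nonnegative sequence does not imply log-concavity of its first differences (a short arithmetic example suffices to see this), and the reduction therefore cannot close the gap.

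The hard part is to upgrade log-concavity of $S_k$ to log-concavity of $E_k = S_k - S_{k+1}$. The plan is to extract from the atlas associated with $P'$ a \emph{matrix} Alexandrov--Fenchel-type bound strong enough to imply
\begin{equation*}
  (S_k - S_{k+1})^2 \ \geq \ (S_{k-1} - S_k)\.(S_{k+1} - S_{k+2}),
\end{equation*}
which expands to an inequality among four consecutive values $S_{k-1}, S_k, S_{k+1}, S_{k+2}$. Concretely, one would isolate a two-dimensional subspace of the ambient vector space of the atlas, spanned by indicator-type vectors for the two positions $k$ and $k+1$ of~$z$, and show that the associated $2\times 2$ principal minor of the atlas quadratic form has nonpositive determinant. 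Reading off this inequality would furnish Conjecture~\ref{conj:poset-Stanley-subset}.

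The main obstacle, which is why the statement is left as a conjecture, is verifying this hyperbolicity on the chosen two-dimensional subspace. The local PSD computations in \cite{CP1} were tailored to the single-element case; here the auxiliary element $z$ interacts simultaneously with every element of $A$, and the inductive reduction over the poset structure must be rebalanced to accommodate the fact that deleting a single element of $X$ can change the subset $A$ as well. We expect that, as for Stanley's inequality, the argument will reduce to establishing PSD conditions on explicit combinatorial matrices built from the pair $(P, A)$, and that the extremal posets for equality should mirror the characterization of Shenfeld--van Handel \cite{SvH} with $A$ playing the role of a single element.
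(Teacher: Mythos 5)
This statement is presented as a conjecture, and the paper offers no proof of it, only the partial results in Theorem~\ref{t:poset-Stanley-subset}; you correctly treat it as open rather than claiming to close it. Your warm-up observation is essentially correct and worth noting: adjoining a new minimum $z$ below the subset $A$ (more precisely, below the upper closure $A\!\uparrow$, since transitivity forces $z \prec c$ for every $c \in A\!\uparrow$; the bijection is unaffected because $f_{\min}(A) \geq k$ already gives $f(c) \geq k$ for all $c \in A\!\uparrow$) and applying Stanley's Theorem~\ref{t:poset-Stanley} to $z$ in $P':=P\cup\{z\}$ yields log-concavity of the tail sums $S_k := \#\{f \in \Ec(P) : f_{\min}(A) \geq k\}$ for \emph{all} $k$. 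This actually strengthens the paper's inequality \eqref{eq:Stanley-ext1}, which is only the $k=2$ instance $S_2^2\ge S_1 S_3$, and it reaches that case by a lighter route than the atlas-based Lemma~\ref{lem:poset-Stanley-subset-cdf-Nform}. (The same auxiliary-element construction does appear in the paper, in the proof of Theorem~\ref{t:poset-covariances-multiple}, but not applied in this way.) You are also right that log-concavity of $S_k$ does not bootstrap to log-concavity of $E_k := S_k - S_{k+1}$: for instance $S=(4,2,1,0)$ is log-concave while its differences $(2,1,1,0)$ are not, so the gap you identify is real, and your sketch of a two-dimensional atlas subspace together with the stated obstacle (deleting an element of $X$ perturbs $A$ itself) is a fair account of why the statement remains a conjecture.
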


\smallskip

To justify the conjecture, we prove that the inequalities \eqref{eq:Stanley-cor2} and
\eqref{eq:Stanley-cor11} extend to all subsets (see also~$\S$\ref{ss:finrem-conj}).

\smallskip

\begin{thm}[{\rm see $\S$\ref{ss:proof-poset-Stanley-subset}}{}]
	\label{t:poset-Stanley-subset}
Let \ts $P=(X,\prec)$ \ts be a poset on \ts $|X|\ge 2$ \ts elements. Fix a nonempty
subset \ts $A \subseteq  X$.  Then:
	\begin{equation}\label{eq:Stanley-ext1}
		    \Pb[f_{\min}(A)> 1]^2 \ \geq \  \Pb[f_{\min}(A) > 2].
	\end{equation}
and
\begin{equation}\label{eq:Stanley-ext2}
    \Pb[f_{\min}(A)  =  1]  \.\cdot \. \Pb[f_{\min}(A)>1]  \  \leq \   \Pb[f_{\min}(A) =   2]\ts.
	\end{equation}
\end{thm}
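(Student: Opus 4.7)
The plan is to reduce both inequalities to a single instance of Stanley's inequality (Theorem~\ref{t:poset-Stanley}) applied to an auxiliary enlarged poset. Concretely, I construct $P^* = (X^*, \prec^*)$ on the ground set $X^* := X \cup \{\ast\}$, where $\prec^*$ is the partial order generated by $\prec$ on $X$ together with the relations $\ast \prec^* a$ for every $a \in A$. Note that $\ast$ is a minimal element of $P^*$. For $1 \le k \le n+1$, let $e^*_k(P^*)$ denote the number of linear extensions $f^* \in \Ec(P^*)$ with $f^*(\ast) = k$.

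The key step is a bijection
\[
\bigl\{\ts f^* \in \Ec(P^*) \,:\, f^*(\ast) = k\ts\bigr\} \ \longleftrightarrow \ \bigl\{\ts f \in \Ec(P) \,:\, f_{\min}(A) \geq k\ts\bigr\},
\]
obtained by inserting or deleting $\ast$ at position $k$ and shifting the remaining values by $\pm 1$. Order-preservation in the direction $\Ec(P) \to \Ec(P^*)$ holds because $f^*(\ast) = k \leq f_{\min}(A) \leq f(a) < f(a)+1 = f^*(a)$ for every $a \in A$, and the reverse direction follows by restriction. Consequently,
\[
e^*_k(P^*) \ = \ e(P) \.\cdot\. \Pb\bigl[\ts f_{\min}(A) \geq k\ts\bigr];
\]
specializing to $k = 1, 2, 3$ yields $e^*_1(P^*) = e(P)$, \ts $e^*_2(P^*) = e(P) \.\cdot\. \Pb[f_{\min}(A) > 1]$, \ts and \ts $e^*_3(P^*) = e(P) \.\cdot\. \Pb[f_{\min}(A) > 2]$.

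Now apply Stanley's inequality \eqref{eq:Stanley-thm} to the poset $P^*$, which has $n+1 \geq 3$ elements, at the element $\ast$ and position $k = 2$, to obtain $e^*_2(P^*)^2 \geq e^*_1(P^*) \.\cdot\. e^*_3(P^*)$. Dividing both sides by $e(P)^2$ immediately yields \eqref{eq:Stanley-ext1}. For \eqref{eq:Stanley-ext2}, set $p := \Pb[f_{\min}(A) > 1]$ and $q := \Pb[f_{\min}(A) > 2]$; then $\Pb[f_{\min}(A) = 1] = 1 - p$ and $\Pb[f_{\min}(A) = 2] = p - q$, so \eqref{eq:Stanley-ext2} rearranges to $q \leq p^2$, which is precisely \eqref{eq:Stanley-ext1} again.

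In summary, both inequalities in Theorem~\ref{t:poset-Stanley-subset} turn out to be equivalent, and both reduce to a single instance of Stanley's inequality on the auxiliary poset $P^*$. The only conceptual step is the observation that adjoining a single new minimal element below $A$ converts the set statistic $f_{\min}(A)$ into the singleton position statistic $f^*(\ast)$, after which Stanley's inequality applies directly. No serious obstacle is anticipated.
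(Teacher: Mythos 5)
Your proof is correct, and it takes a genuinely different and notably cleaner route than the paper's. The bijection
\[
\bigl\{\ts f^* \in \Ec(P^*) \,:\, f^*(\ast) = k\ts\bigr\} \ \longleftrightarrow \ \bigl\{\ts f \in \Ec(P) \,:\, f_{\min}(A) \geq k\ts\bigr\}
\]
is valid: order-preservation on the new covering relations follows from \ts $f(a) \ge f_{\min}(A) \ge k$ \ts for all \ts $a \in A$, and the constraint $\ast \prec^* x$ for $x \in A\!\uparrow$ is then automatic, so the insertion/deletion maps are mutually inverse. Applying Stanley's inequality to $P^*$ at $\ast$ with $k = 2$ (legitimate since $|X^*| = n+1 \ge 3$) gives \eqref{eq:Stanley-ext1}, and your algebraic observation that \eqref{eq:Stanley-ext2} rearranges to precisely the same inequality $q \le p^2$ is also correct. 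The paper, by contrast, proves the two inequalities separately via two distinct lemmas (Lemmas~\ref{lem:poset-Stanley-subset-cdf-Nform} and~\ref{lem:poset-Stanley-subset-yinyang-Nform}), each derived from the hyperbolic property \eqref{eq:Hyp} of the associated matrix $\bM(P,\pa,k)$ via Proposition~\ref{prop:M}, with nontrivial combinatorial bookkeeping (equations~\eqref{eq:alfa1}--\eqref{eq:alfa3} and \eqref{eq:Stanley-Lemma-cconsequence-calc}). Your proof treats Stanley's inequality as a black box, entirely bypassing the atlas machinery. What the paper's approach buys is uniformity: the same matrix setup and lemmas feed directly into Theorems~\ref{t:poset-disjoint-logconcave} and~\ref{t:poset-disjoint-half-correlation}, so the longer route is amortized across the section. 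What your approach buys is modularity and generality: applying Stanley to $P^*$ at arbitrary $k$ immediately yields the full log-concavity of the tail sequence $\Pb[f_{\min}(A) \ge k]$ (a strengthening of \eqref{eq:Stanley-ext1} that sits between the theorem and Conjecture~\ref{conj:poset-Stanley-subset}), and makes transparent that the two stated inequalities are one and the same. It is worth noting that the paper does use the analogous auxiliary-poset trick, adding elements below subsets, in the proof of Theorem~\ref{t:poset-covariances-multiple}, but there reduces to Theorem~\ref{t:poset-corr-deletion} rather than to Stanley; your observation that the same device reduces Theorem~\ref{t:poset-Stanley-subset} to Stanley appears to have been missed.
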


\smallskip

Although Conjecture~\ref{conj:poset-Stanley-subset} is written as a natural generalization
of Stanley Theorem~\ref{t:poset-Stanley}, the inequalities \eqref{eq:Stanley-ext1}
and \eqref{eq:Stanley-ext2} are best understood as
\begin{equation}\label{eq:Stanley-subset1}
 \Pb[1\notin f(A)]^2 \ \geq \ \Pb[1,2\notin f(A)]  \qquad \text{and}
\end{equation}
\begin{equation}\label{eq:Stanley-subset2}
\Pb[1\in f(A)]  \.\cdot \. \Pb[1\notin f(A)] \ \leq \ \Pb[1\notin f(A), 2\in f(A)]\ts,
\end{equation}
respectively.
We show in $\S$\ref{ss:cor-proof-two-lemmas}, that Conjecture~\ref{conj:poset-Stanley-subset}
implies Theorem~\ref{t:poset-Stanley-subset}.
The following corollary is an immediate consequence of \eqref{eq:Stanley-subset1} and \eqref{eq:Stanley-subset2}.

\smallskip

\begin{cor}[{\rm cf.\ Lemma~\ref{lem:poset-Stanley-subset-cdf-Nform}}{}] \label{c:Stanley-poset-two}
Let \ts $P=(X,\prec)$ \ts be a poset on \ts $|X|\ge 2$ \ts elements, and let \ts $A \subseteq X$ \ts be
a nonempty subset of elements.  Then:
\begin{equation}\label{eq:Stanley-ext3}
 \Pb[1\in   f(A), 2\notin f(A)]^2  \ \geq \ \Pb[1,2\in f(A)] \. \cdot \. \Pb[1,2\notin f(A)]\ts.
\end{equation}
\end{cor}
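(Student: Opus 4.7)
The plan is to apply Theorem~\ref{t:poset-Stanley-subset} both to the given subset $A$ and to its complement $B := X \setminus A$, and then to combine the resulting inequalities by a short multiplicative argument. The key observation driving this approach is the duality $\{i \notin f(A)\} = \{i \in f(B)\}$ for every label $i$, which allows every event describing the positions of the labels $1$ and $2$ relative to $A$ to be re-expressed in terms of $B$, and hence to be bounded by Theorem~\ref{t:poset-Stanley-subset} applied to $B$.

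First I would introduce
\[
p_1 := \Pb[1 \in f(A),\ 2 \notin f(A)], \quad p_2 := \Pb[1,2 \in f(A)], \quad p_3 := \Pb[1 \notin f(A),\ 2 \in f(A)], \quad p_4 := \Pb[1,2 \notin f(A)],
\]
so that the target inequality \eqref{eq:Stanley-ext3} reads $p_1^2 \geq p_2 \cdot p_4$. Translating via $B$, the bound \eqref{eq:Stanley-subset1} applied to $B$ becomes $(p_1+p_2)^2 \geq p_2$, and \eqref{eq:Stanley-subset2} applied to $B$ becomes $(p_1+p_2)(p_3+p_4) \leq p_1$. Applying \eqref{eq:Stanley-subset1} directly to $A$ supplies the third ingredient, $(p_3+p_4)^2 \geq p_4$.

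With these three bounds in hand, I would square the relation $(p_1+p_2)(p_3+p_4)\leq p_1$ and substitute the other two to obtain
\[
p_1^2 \ \geq \ (p_1+p_2)^2\,(p_3+p_4)^2 \ \geq \ p_2 \cdot p_4,
\]
which is precisely \eqref{eq:Stanley-ext3}.

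The only real obstacle is conceptual: one must recognize that Theorem~\ref{t:poset-Stanley-subset} should be applied to the complement $B$ as well, and not only to $A$. Once this swap is spotted, the deduction is purely algebraic and does not rely on any further structure of the poset $P$ or of the subset $A$.
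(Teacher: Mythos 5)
Your proof is correct and is essentially the same as the paper's own argument: the paper's one-line proof ("Multiply \eqref{eq:Stanley-subset1} for subsets $A$ and $X\sm A$. Then use \eqref{eq:Stanley-subset2}.") carries out exactly the multiplication $(p_1+p_2)^2(p_3+p_4)^2 \geq p_2\,p_4$ followed by the squared form of \eqref{eq:Stanley-subset2} applied to the complement, as you did. Your write-up simply spells out the bookkeeping under the substitution $\{i\notin f(A)\}=\{i\in f(X\sm A)\}$.
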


\begin{proof}  Multiply \eqref{eq:Stanley-subset1} for subsets $A$ and $X\sm A$. Then use \eqref{eq:Stanley-subset2}.
\end{proof}

\medskip

\subsection{Multiple subsets}\label{ss:intro-Stanley-more}
We now consider other correlation inequalities obtained by replacing poset
elements with subsets.  First, we have the following
natural generalization of Theorem~\ref{t:poset-covariances}.

\smallskip

\begin{thm}[{\rm see~$\S$\ref{ss:proof-poset-covariances}}{}]\label{t:poset-covariances-multiple}
Let \ts $P=(X,\prec)$ \ts be a finite poset, and let  \ts $A,B\subseteq X$ \ts be nonempty
subsets. Then:
	\begin{equation}\label{eq:poset-covariances-multiple}
		\frac{\Eb\big[f_{\min}(A) \ts  f_{\min}(B) \big] \. + \. \Eb\big[f_{\min}(A\cup B)\big]}{\Eb\big[f_{\min}(A)\big]  \.\cdot\.  \Eb\big[f_{\min}(B)\big]} \ \leq \ 2\ts.
	\end{equation}
\end{thm}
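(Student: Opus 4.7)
The plan is to deduce Theorem~\ref{t:poset-covariances-multiple} from the upper bound in Theorem~\ref{t:poset-corr-deletion} via an augmented-poset reduction.

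First, I would construct the poset $\widetilde{P}$ on the ground set $X \sqcup \{\widehat{a}, \widehat{b}\}$ by adjoining two new incomparable elements subject only to the relations $\widehat{a} \prec x$ for every $x \in A$ and $\widehat{b} \prec y$ for every $y \in B$. Then $\widehat{a}$ and $\widehat{b}$ are distinct minimal elements of $\widetilde{P}$, with $\widetilde{P} - \widehat{a} - \widehat{b} = P$, while $\widetilde{P} - \widehat{b}$ and $\widetilde{P} - \widehat{a}$ are the single-adjoin posets $P \cup \{\widehat{a}\}$ and $P \cup \{\widehat{b}\}$, respectively.

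Second, the key step is an insertion-counting identity: for each $f \in \Ec(P)$,
\[
\bigl|\{g \in \Ec(\widetilde{P}) \,:\, g \text{ restricts to } f\}\bigr| \ = \ f_{\min}(A) \cdot f_{\min}(B) \. + \. f_{\min}(A \cup B).
\]
The constraint $\widehat{a} \prec A$ allows $\widehat{a}$ to occupy any of the $f_{\min}(A)$ gaps preceding the first $A$-element of $f$, and similarly $\widehat{b}$ has $f_{\min}(B)$ permissible gaps. Distinct-gap placements produce $f_{\min}(A)\. f_{\min}(B) - \min(f_{\min}(A), f_{\min}(B))$ extensions, while coinciding-gap placements admit two orderings of $\widehat{a}, \widehat{b}$ and contribute $2\min(f_{\min}(A), f_{\min}(B))$; using $\min(f_{\min}(A), f_{\min}(B)) = f_{\min}(A \cup B)$ yields the identity. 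The analogous count with one adjoined element gives $e(\widetilde{P} - \widehat{b}) = e(P)\, \Eb[f_{\min}(A)]$ and $e(\widetilde{P} - \widehat{a}) = e(P)\, \Eb[f_{\min}(B)]$.

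Third, summing the identity over $f \in \Ec(P)$ yields $e(\widetilde{P}) = e(P)\bigl(\Eb[f_{\min}(A)\, f_{\min}(B)] + \Eb[f_{\min}(A \cup B)]\bigr)$, so the left-hand side of \eqref{eq:poset-covariances-multiple} becomes
\[
\frac{e(\widetilde{P}) \. \cdot \. e(\widetilde{P} - \widehat{a} - \widehat{b})}{e(\widetilde{P} - \widehat{a}) \. \cdot \. e(\widetilde{P} - \widehat{b})},
\]
which is at most $2$ by the upper bound in Theorem~\ref{t:poset-corr-deletion} applied to $\widetilde{P}$ with the two distinct minimal elements $\widehat{a}, \widehat{b}$. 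The main obstacle is verifying the insertion-counting identity cleanly, particularly the $+\. f_{\min}(A \cup B)$ correction from the coinciding-gap case; once that is in hand, the claim follows from a correlation inequality already in hand by cancelling $e(P)$. Specializing to $A = \{x\}$, $B = \{y\}$ immediately recovers Theorem~\ref{t:poset-covariances}.
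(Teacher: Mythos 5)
Your proposal is correct and matches the paper's proof essentially line for line: the same auxiliary poset $P'$ built by adjoining two new minimal elements below $A$ and $B$, the same insertion-counting identities $e(P'-x) = \Eb[f_{\min}(B)]\,e(P)$, $e(P'-y) = \Eb[f_{\min}(A)]\,e(P)$, $e(P') = \big(\Eb[f_{\min}(A)f_{\min}(B)] + \Eb[f_{\min}(A\cup B)]\big)\,e(P)$, and the same invocation of the upper bound in Theorem~\ref{t:poset-corr-deletion} applied to $P'$. Your careful accounting of the coinciding-gap case (two orderings of $\widehat a,\widehat b$ giving the $+\min(f_{\min}(A),f_{\min}(B)) = +f_{\min}(A\cup B)$ correction) is exactly the detail the paper's proof relies on but states without elaboration.
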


\smallskip

Let us emphasize that here \ts $A$ \ts and \ts $B$ \ts are \emph{arbitrary} \ts subsets of the ground set~$X$.
Next, we have the following variation of Corollary~\ref{c:Stanley-poset-two}
for two subsets of minimal elements.

\smallskip

For an element \ts $b\in X$, denote by \. $b  \!\uparrow \. := \ts \{x\in X \. : \. x\succcurlyeq b\}$ \.
the \defn{upper order ideal} generated by~$b$.  For
a subset \ts $B\subseteq X$, denote by \ts $B\!\uparrow \. := \ts \cup_{b\in B} \, b\!\uparrow$ \. the
\defn{upper closure} of~$B$.
\smallskip

\begin{thm}[{\rm see~$\S$\ref{ss:proof-poset-disjoint}}{}]
\label{t:poset-disjoint-logconcave}
Let \ts $P=(X,\prec)$ \ts be a finite poset, and let \ts $A,B \subset \min(P)$ \ts be
disjoint nonempty subsets of minimal elements.  Then:
	\begin{equation}\label{eq:poset-disjoint-logconcave-new}
    \aligned
	& \Pb\big[1\in f(A), 2 \in f(B)\big]^2 \ \geq \ \Pb\big[1\in f(A), \ts 2 \in f(A\!\uparrow)\ts\big] \. \cdot \. \Pb\big[1\in f(B), \ts 2 \in f(B\!\uparrow)\big]\..
    \endaligned
	\end{equation}
\end{thm}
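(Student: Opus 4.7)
The plan is to reformulate \eqref{eq:poset-disjoint-logconcave-new} as a reverse Cauchy--Schwarz (Alexandrov--Fenchel-type) inequality for a suitable symmetric matrix associated with $P$, and then invoke the combinatorial atlas hyperbolicity from \cite{CP1,CP2}.

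First, I multiply both sides by $e(P)^2$ to pass from probabilities to counts. Since $A,B\subseteq\min(P)$ are disjoint, each $b\in B$ remains minimal in $P-a$, so
\[
e(P)\cdot\Pb\bigl[\ts 1\in f(A),\ts 2\in f(B)\ts\bigr] \ = \ N(A,B) \ := \ \sum_{a\in A,\ b\in B} e(P-a-b)\ts.
\]
Since $\min(P)\cap A\!\uparrow=A$, a linear extension has both labels $1$ and $2$ in $A\!\uparrow$ iff label $1$ is in $A$ and label $2$ is in $A\!\uparrow$, so
\[
e(P)\cdot\Pb\bigl[\ts 1\in f(A),\ts 2\in f(A\!\uparrow)\ts\bigr] \ = \ \widetilde N(A) \ := \ \#\bigl\{\ts f\in\Ec(P)\,:\,f^{-1}(\{1,2\})\subseteq A\!\uparrow\ts\bigr\},
\]
and similarly for $B$. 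Conditioning on $f^{-1}(1)=a$ and using that $\min(P-a)\cap A\!\uparrow=(A\setminus\{a\})\sqcup N_a$, where $N_a:=\bigl\{y\in X\,:\,\{z\in X\,:\,z\prec y\}=\{a\}\bigr\}$ is the set of elements whose unique predecessor in $P$ is $a$, I get
\[
\widetilde N(A) \ = \ \sum_{\substack{a,a'\in A\\ a\neq a'}} e(P-a-a') \ + \ \sum_{a\in A}\sum_{y\in N_a} e(P-a-y)\ts.
\]

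Next, I define a symmetric matrix $M=(M_{a,a'})_{a,a'\in\min(P)}$ by
\[
M_{a,a'} \ := \ e(P-a-a') \ \text{ for } \ a\neq a'\ts, \qquad M_{a,a} \ := \ \sum_{y\in N_a} e(P-a-y)\ts.
\]
The displays above yield
\[
\widetilde N(A) \ = \ \mathbf{1}_A^{T} M\, \mathbf{1}_A\ts, \qquad N(A,B) \ = \ \mathbf{1}_A^{T} M\, \mathbf{1}_B\ts,
\]
where the second identity uses $A\cap B=\emptyset$ so that the diagonal of $M$ does not contribute. Consequently, \eqref{eq:poset-disjoint-logconcave-new} is equivalent to the quadratic form inequality
\[
\bigl(\ts\mathbf{1}_A^{T} M\, \mathbf{1}_B\ts\bigr)^{2} \ \geq \ \bigl(\ts\mathbf{1}_A^{T} M\, \mathbf{1}_A\ts\bigr)\,\bigl(\ts\mathbf{1}_B^{T} M\, \mathbf{1}_B\ts\bigr),
\]
i.e.\ the hyperbolic (Alexandrov--Fenchel-type) inequality for $M$ evaluated on the nonnegative indicator vectors $\mathbf{1}_A$ and $\mathbf{1}_B$.

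The final step is to identify $M$ with the base-level matrix of a combinatorial atlas for $P$ and deduce its hyperbolicity by induction on $|X|$, following the strategy of \cite{CP1,CP2} used for Theorem~\ref{t:poset-Stanley-subset}; plugging $u=\mathbf{1}_A$, $v=\mathbf{1}_B$ into the resulting inequality then completes the proof. The main obstacle is precisely this last step: the diagonal entry $M_{a,a}$ is governed by the unique-predecessor set $N_a$, which is \emph{not} preserved under deletion of other minimal elements (indeed, $N_a$ can strictly grow when a minimal $c\neq a$ is deleted, as any $y$ with $\{z\prec y\}=\{a,c\}$ acquires $\{z\prec y\text{ in }P-c\}=\{a\}$). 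Encoding this subtle dependence uniformly into the inductive atlas structure, so that the atlas's hyperbolicity axioms propagate and produce exactly $M$ at the base level, is the technical heart of the proof.
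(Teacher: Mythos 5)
Your reduction is essentially the one the paper uses in \S\ref{ss:proof-poset-disjoint}. The matrix $M$ you construct, indexed by $\min(P)$ with $M_{a,a'}=e(P-a-a')$ for $a\neq a'$ and $M_{a,a}=\sum_{y\in N_a}e(P-a-y)$, is exactly the principal submatrix of the paper's matrix $\bM:=\bM(P+\{\pa\},\pa,k)$ on the block $\min(P)\subset\min(Z_{\down})$, and your identities $\mathbf{1}_A^{T}M\mathbf{1}_A=\widetilde N(A)$ and $\mathbf{1}_A^{T}M\mathbf{1}_B=N(A,B)$ reproduce Lemma~\ref{lem:poset-disjoint-comb-interpret} (via \eqref{eq:M-diag-comb} for the diagonal). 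So up to the last step you have rederived the paper's argument.

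Where you go wrong is in treating the ``final step'' as an open technical problem. You do not need to set up a new combinatorial atlas whose base-level matrix \emph{is} your $M$, nor to propagate the $N_a$-dependence through an induction on $|X|$; that worry about $N_a$ growing under deletion of another minimal element is moot. The paper simply cites Proposition~\ref{prop:M} (= \cite[Prop.~14.9]{CP1}) as a black box: the full $2(n-1)\times 2(n-1)$ matrix $\bM(P',\pa,k)$ for $P'=P+\{\pa\}$ satisfies \eqref{eq:Hyp}. Since \eqref{eq:Hyp} is stated for arbitrary vectors, and $\mathbf{1}_A,\mathbf{1}_B$ (padded by zeros to lie in $\Rb^{2(n-1)}_{+}$) are nonnegative with $\langle\mathbf{1}_B,\bM\mathbf{1}_B\rangle\geq 0$ automatically, the inequality
\[
\langle\mathbf{1}_A,\bM\mathbf{1}_B\rangle^{2}\ \geq\ \langle\mathbf{1}_A,\bM\mathbf{1}_A\rangle\,\langle\mathbf{1}_B,\bM\mathbf{1}_B\rangle
\]
is immediate and, by your combinatorial identifications, gives \eqref{eq:poset-disjoint-logconcave-new} directly. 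In particular, $M$ inherits the relevant reverse Cauchy--Schwarz inequality from $\bM$ by restriction (or, if you prefer, by Cauchy interlacing $M$ itself has at most one positive eigenvalue). So your proposal is correct in its reduction but incomplete because it stops at a step that is already a citable result; you should replace ``identify $M$ with the base-level matrix and redo the induction'' with ``view $M$ as a compression of $\bM(P+\{\pa\},\pa,k)$ and invoke Proposition~\ref{prop:M}.''
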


\smallskip

We conclude with the following  three-subset variation:

\smallskip

\begin{thm}[{\rm see~$\S$\ref{ss:proof-poset-disjoint}}{}]
	\label{t:poset-disjoint-half-correlation}
Let \ts $P=(X,\prec)$ \ts be a finite poset, and let \ts $A,B,C \subset \min(P)$ \ts be
disjoint nonempty subsets of minimal elements. Then:
\begin{equation}\label{eq:Stanley-ext-12}
\frac{\Pb\big[1\in f(C), \ts 2 \in f(C\!\uparrow)\big] \. \cdot \. \Pb\big[1\in f(A), \ts 2\in f(B)\big]}{\Pb\big[1\in f(A), \ts 2\in f(C)\big] \.\cdot \. \Pb\big[1\in f(B), \ts 2\in f(C)\big]}
		\ \leq \  2\ts.
\end{equation}
Moreover, there exists a permutation \. $(A',B',C')$ \. of \. $(A,B,C)$ \. such that
\begin{equation}\label{eq:Stanley-ext-12-prime}
\frac{\Pb\big[1\in f(C'), \ts 2 \in f(C'\!\uparrow)\big] \. \cdot \. \Pb\big[1\in f(A'), \ts 2\in f(B')\big]}{\Pb\big[1\in f(A'), \ts 2\in f(C')\big] \.\cdot \. \Pb\big[1\in f(B'), \ts 2\in f(C')\big]} \
\leq \ 1\ts.
\end{equation}
\end{thm}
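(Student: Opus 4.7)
For disjoint subsets $X, Y \subseteq \min(P)$, write
\[
q_X \ := \ \Pb\big[1 \in f(X),\, 2 \in f(X\!\uparrow)\big], \qquad p_{X,Y} \ := \ \Pb\big[1 \in f(X),\, 2 \in f(Y)\big].
\]
Since $X \cap Y = \emptyset$ and both subsets consist of minimal elements of $P$, one has the explicit formula $p_{X,Y} = \tfrac{1}{e(P)} \sum_{x \in X, y \in Y} e(P-x-y)$; the symmetry of the summand in $(x,y)$ then yields the identity $p_{X,Y} = p_{Y,X}$, a fact used repeatedly below.

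I address the ``moreover'' inequality \eqref{eq:Stanley-ext-12-prime} first. Using $p_{X,Y}=p_{Y,X}$, each of the three essentially distinct permutations of $(A,B,C)$ yields exactly one of the ratios
\[
R_A \ = \ \frac{q_A\, p_{B,C}}{p_{A,B}\, p_{A,C}}, \qquad R_B \ = \ \frac{q_B\, p_{A,C}}{p_{A,B}\, p_{B,C}}, \qquad R_C \ = \ \frac{q_C\, p_{A,B}}{p_{A,C}\, p_{B,C}}.
\]
Direct multiplication gives $R_A R_B R_C = q_A q_B q_C / (p_{A,B}\, p_{A,C}\, p_{B,C})$. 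Theorem~\ref{t:poset-disjoint-logconcave} applied to each of the three disjoint pairs yields $q_X q_Y \leq p_{X,Y}^2$; taking the product of these three inequalities and a square root gives $q_A q_B q_C \leq p_{A,B}\, p_{A,C}\, p_{B,C}$, hence $R_A R_B R_C \leq 1$. Therefore at least one of the three ratios is $\leq 1$, which supplies the required permutation in \eqref{eq:Stanley-ext-12-prime}.

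For the uniform bound \eqref{eq:Stanley-ext-12}, by symmetry of the statement in $(A,B,C)$ it suffices to establish $R_C \leq 2$. The pairwise consequence $R_X R_Y \leq 1$ (immediate from Theorem~\ref{t:poset-disjoint-logconcave}) is not strong enough: it ensures at most one $R_X$ exceeds $1$, but does not cap that exceedance. My plan is to split
\[
q_C \ = \ \Pb[1,\, 2 \in f(C)] \, + \, \Pb[1 \in f(C),\, 2 \in f(C\!\uparrow \sm C)]
\]
and bound the two summands separately against $2\, p_{A,C}\, p_{B,C}/p_{A,B}$. The first summand is a sum over ordered pairs $(y,z) \in C \times C$, $y \neq z$, of terms $e(P-y-z)/e(P)$, and it is bounded via the pointwise upper inequality $e(P)\, e(P-y-z) \leq 2\, e(P-y)\, e(P-z)$ from Theorem~\ref{t:poset-corr-deletion}, applied after rearranging against the defining sum expansions of $p_{A,C}$ and $p_{B,C}$. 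For the second summand, the constraint that $y \in C\!\uparrow \sm C$ be minimal in $P-x$ forces $x \in C$ to be the unique $P$-predecessor of $y$; this rigidity permits reduction to an inequality in the deleted poset $P - y$, where the subset-enhanced version Theorem~\ref{t:poset-corr-deletion-strong} closes the argument.

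The main obstacle is the second summand: the element $y$ is not minimal in $P$ and has no direct counterpart in the minimal-element expansions of $p_{A,C}$ and $p_{B,C}$. Routing the bound through the auxiliary posets $P - y$ requires careful aggregation across all such $y$, and this is precisely the organizational task for which the combinatorial atlas framework of \cite{CP1,CP2} is designed: the relevant counts assemble into a matrix whose hyperbolic properties deliver the factor $2$ uniformly across both summands.
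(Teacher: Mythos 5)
Your proof splits into two halves of very different quality.

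The argument for the ``moreover'' inequality~\eqref{eq:Stanley-ext-12-prime} is correct and is a genuinely different route from the paper's. The paper invokes Lemma~\ref{lem:atlas-corr-conditional}, which establishes the stronger statement that at least \emph{two} of the three inequalities hold, by supposing two fail and multiplying those two to contradict~\eqref{eq:Hyp}. You instead observe that the product \ts $R_A R_B R_C = q_A q_B q_C / (p_{A,B}\,p_{A,C}\,p_{B,C})$ \ts is at most~$1$ by three applications of Theorem~\ref{t:poset-disjoint-logconcave}, so at least one ratio is $\le 1$. This is clean, uses only a black-box theorem already in the paper, and needs no separate hyperbolic lemma; the paper's route buys the marginally stronger conclusion that at most one of the three ratios can exceed~$1$, which is not needed for the theorem as stated. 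Either approach is fine, and in fact yours is arguably shorter. (Your identity \ts $p_{X,Y}=p_{Y,X}$ \ts for disjoint $X,Y\subseteq\min(P)$ is correct and necessary for the three permutation ratios to collapse to $R_A,R_B,R_C$.)

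The main bound~\eqref{eq:Stanley-ext-12}, however, is not proven. Your decomposition plan has two problems. First, an arithmetic one: if you bound \emph{each} of the two summands of \ts $q_C$ \ts by \ts $2\,p_{A,C}\,p_{B,C}/p_{A,B}$, the total is \ts $4\,p_{A,C}\,p_{B,C}/p_{A,B}$, twice the target, so the split cannot distribute the constant $2$ the way you suggest without a sharper bound on at least one piece. Second, and more importantly, you explicitly punt on the ``second summand'' and say the assembly is ``precisely the organizational task for which the combinatorial atlas framework \dots\ is designed''---but that just restates the problem without solving it, and the pointwise inequality \ts $e(P)\,e(P-y-z)\le 2\,e(P-y)\,e(P-z)$ \ts does not obviously aggregate into a bound on the ratio of sums in the first piece either. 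The paper's actual proof does not decompose \ts $q_C$ \ts at all: it applies Lemma~\ref{lem:atlas-corr-half}, which for any hyperbolic \ts $\bM$ \ts and nonnegative \ts $\xb,\yb,\zb$ \ts gives \ts $\bM_{\zb\zb}\bM_{\xb\yb}\le 2\,\bM_{\xb\zb}\bM_{\yb\zb}$, with \ts $\xb,\yb,\zb$ \ts taken to be the characteristic vectors of \ts $A,B,C\subseteq Z_{\down}$\ts, uses Lemma~\ref{lem:poset-disjoint-comb-interpret} to translate the four matrix entries into \ts $\aN_{k+1}$-counts, and finishes with the usual parallel-sum reduction \ts $P'=P+\{\pa\}$. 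If you want an honest proof of~\eqref{eq:Stanley-ext-12}, you need to invoke that three-vector hyperbolic inequality directly, not gesture at the atlas.
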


%
%
%

\medskip

\subsection{Background and historical remarks}\label{ss:intro-back}
In Combinatorics, correlation inequalities go back to the work of
Kirchhoff on electrical networks.  In modern notation, his result can
be reformulated as saying that for all edges \ts $e,e'\in E$, we have:
\begin{equation}\label{eq:Kir}
\Pb(e,e'\in T) \, \le \, \Pb(e\in T) \.\cdot\. \Pb(e'\in T),
\end{equation}
where the probability is over uniform spanning trees~$T$ in a given
graph \ts $G=(V,E)$. This idea has proved extremely influential over the past
decades, leading to a long series of remarkable results.

In Graph Theory, the classic examples of correlation inequalities for hereditary
graph properties include the \defng{Harris inequality} (1960), the
\defng{Kleitman inequality} (1966) 
and their far-reaching
extensions: the \defng{Ahlswede--Daykin} (1978) 
and the \defng{FKG inequalities} (1971).  
We refer to \cite[$\S$6.1]{AS} for the introduction, and to \cite{FS} for a detailed overview.

In Statistical Physics, notable correlation
inequalities include \defng{Griffiths} (1967) 
and \defng{Griffiths--Hurst--Sherman inequalities} (1970), 
all for the Ising model.  We refer to \cite[Ch.~15]{AB} for the background 
and combinatorial applications.
See also \cite{BBL,KN,Pem} for the general theory of negative correlations. 

In Enumerative and Algebraic Combinatorics, correlation inequalities are closely related to
log-concavity (log-convexity) of sequences counting various combinatorial objects,
see~\cite{Bra,Sta2}.  
%
In the past few years, Huh and his coauthors introduced and explored a new
algebro-geometric approach to the subject.  We refer to \cite{AHK} for a major
breakthrough proving a long-standing Mason's conjecture on the numbers of independent
sets of a matroid, to \cite{HSW} for correlation inequalities for matroids,
and to \cite{Huh,Huh2} for recent surveys.
 We also mention the remarkable papers \cite{ALOV,BH} \ts
 which used \defng{Lorentzian polynomials} to obtain (stronger) ultra-log-concave
 inequalities for matroids.

In Poset Theory, besides Stanley inequality (Theorem~\ref{t:poset-Stanley}), notable
correlation inequalities include the \defng{Graham--Yao-Yao inequality} (1980),
 the \defng{XYZ inequality} (1982), and the \defng{Kahn--Saks inequality} \cite{KS},
 see an overview in \cite{Fish,Win}.
We also note the (conjectural) \defng{cross--product inequality}, see~$\S$\ref{ss:finrem-CPC}.

As we mentioned earlier, this paper grew from our paper \cite{CP1}
where we introduced the combinatorial atlas technology, using an involved
inductive argument based on linear algebra.  See also~\cite{CP2} for an
introduction to this theory, and for connections to Lorentzian polynomials and the
Alexandrov--Fenchel inequality (see also $\S$\ref{ss:finrem-Stanley}).

In contrast with the Lorentzian polynomial approach,  combinatorial
atlas is fundamentally noncommutative.  This and the elementary nature
of matrices makes the combinatorial atlas technology flexible enough to apply
to linear extensions and obtain the inequalities that are not reachable
by other means.  We believe that this applies to \cite[Thm~1.35]{CP1}
and most new inequalities for linear extension in this paper.
See~$\S$\ref{ss:finrem-easy} for further discussion on this.

\medskip

\subsection{Discussion}\label{ss:intro-disc}
Linear extensions of finite posets are somewhat different in spirit from
graphs, matroids, etc., in the type of statistics worth studying, and 
the difficulty of available tools, see \cite{CP23-survey}. 
Of the fairly large pool of potential correlation inequalities
we chose the ones that were accessible with the atlas technology.
The following two examples were motivational for our work.

\smallskip

\nin
{\small $(1)$} \.
It is instructive to compare our results with the \defn{Huh--Schr\"oter--Wang correlation
inequality} \ts for \defng{bases of matroids} \ts \cite{HSW}, which can be stated as follows.
Let \ts $\Mf=(X,\Bc)$ \ts be a finite matroid
of rank \ts $d=\rk(\Mf)$ \ts given by the set of bases \ts $\Bc \subseteq \binom{X}{d}$.
Then, for every two distinct elements \ts $u,v\in X$, we have:
\begin{equation}\label{eq:HSW}
\frac{\Pb[u \in B, v\in B] \.\cdot \.  \Pb[u \notin B, v\notin B] }{\Pb[u \in B, v\notin B]  \.\cdot \. \Pb[u \notin B, v\in B] }
\ \le  \ 2  \left(1 \. - \. \frac1d\right),
\end{equation}
where the probability is over uniform \ts $B\in \Bc$, see \cite[Thm~5]{HSW}.
The authors note that this implies the covariance bound
\begin{equation}\label{eq:HSW2}
\Covb(u\in B, v\in B) \, < \, \Pb[u \in B] \.\cdot \.\Pb[v\in B]\ts,
\end{equation}
for all fixed distinct elements \ts $u,v\in X$.
Although we are not aware of a formal connection, our upper bounds
\eqref{eq:poset-corr-tight-probab} and \eqref{eq:poset-cov-2} have a similar
in structure to \eqref{eq:HSW} and \eqref{eq:HSW2}, respectively.

Continuing the analogy, the constant~$2$ in~\eqref{eq:HSW} was improved to~$1$
in some special cases \cite[Thm~6]{HSW}. It is not known whether it can be further
improved in full generality, with some examples giving $8/7$ ratio.  Similarly,
our tools do not allow the asymptotic improvement of the constant~$2$.
in~\eqref{eq:poset-cov}, but we do have examples showing that here the constant~2 is asymptotically tight (Example~\ref{ex:poset-AK}).

\smallskip

\nin
{\small $(2)$} \.
In a related earlier study of \defng{random graph matchings}, Kahn \cite[$\S$4]{Kahn}
introduced the notion of \defna{approximate independence} \ts to describe
correlation inequalities of the type we consider in this paper.
%
Let $G=(V,E)$ be a simple graph. A \defn{matching} \ts is a subset \ts $M\subseteq E$ \ts
of pairwise nonadjacent edges.  Let \ts $\cM$ \ts denote the set of all matchings in~$G$,
and let \ts $\cM_k\subseteq\cM$ \ts be the set of matchings in~$G$ of size~$k$.

Heilmann and Lieb (1972) 
famously showed that the sequence \ts $\big\{|\cM_k|\big\}$ \ts is
log-concave, which parallels Stanley inequality \eqref{eq:Stanley}.
%
\defn{Kahn inequality} \cite[Cor.~4.3]{Kahn} states
that for all distinct vertices \ts $u,v\in V$, we have:\footnote{In fact, Kahn's bound is
stronger on both sides. We present a simplified version for clarity.}
\begin{equation}\label{eq:Kahn}
\frac12 \, \le \, \frac{\Pb[u\not\in M, \ts v\not\in M]}{\Pb[u\not\in M]\. \cdot \. \Pb[v\not\in M]} \, \le \, 2\ts,
\end{equation}
where \ts $M \in \cM$ \ts is a uniform random matching (cf.\ Example~\ref{ex:Kahn-zeta}).
Our correlation inequality \eqref{eq:poset-corr-tight-probab}
is modeled after this result.

\bigskip

\subsection{Paper structure}\label{ss:intro-structure}
We start with notations in Section~\ref{s:notation}.  In Section~\ref{s:app},
we present a number of examples and applications of correlation inequalities.
Many of these are for illustrative purposes and can be skipped, but some give
background and context for the type of correlation inequalities that we consider
throughout the paper.

We present applications to Enumerative Combinatorics in Section~\ref{s:enum},
notably to inequalities between numbers of certain \defng{standard Young tableaux},
and to polynomial inequalities for \defng{Euler numbers}.  Although these are
results not mentioned in the introduction to avoid shifting the focus,
the reader might find them curious (cf.~$\S$\ref{ss:finrem-Bjo}).  While we aim to be
self-contained and include most definitions, some familiarity with the
subject is helpful for the clarity and the motivation.

In Section~\ref{s:atlas}, we give a linear algebraic setup to derive
correlation inequalities for hyperbolic matrices. This section is completely
self-contained.  The hope is that our basic approach can be used to obtain
other inequalities.  A lengthy Section~\ref{s:proof} contains proofs
and occasional generalizations of results in the introduction.  The key
result there is Proposition~\ref{prop:M} proved in \cite{CP1} and used
as a black box in this paper.

A short Section~\ref{s:log-proof} has proofs of three implications of
the log-concavity.  All of these are straightforward and included for
completeness.  We conclude with final remarks and open problems in
Section~\ref{s:finrem}.

\bigskip

\section{Notations} \label{s:notation}

We use \ts $\nn=\{0,1,2,\ldots\}$, \ts $\rr_+ = \{x\ge 0\}$ \ts and \ts $[n]=\{1,2,\ldots,n\}$.
For a poset \ts $P=(X,\prec)$, denote by \ts $P^\ast = (X, \prec^\ast)$ \ts the
\defn{dual poset} \ts with \ts $x\prec^\ast y$ \ts if and only if \ts $y\prec x$,
for all \ts $x,y \in X$.  Denote by \ts $\min(P)$ \ts and \ts $\max(P)$ \ts the set
of minimal and maximal elements in~$P$, respectively.
For $x,y \in X$, we write \. $x \. || \. y $ \. if $x$ is incomparable to $y$  in $P$.

For posets \ts $P=(X,\prec_P)$ \ts and \ts $Q=(Y,\prec_Q)$,
the \defnb{parallel sum} \. $P+Q=(Z,\prec)$ \. is the poset on the disjoint union \ts $Z=X \sqcup Y$,
where elements of $X$ retain the partial order of~$P$, elements of $Y$ retain
the partial order of $Q$, and elements \ts $x \in X$ \ts and  \ts $y \in Y$ are incomparable.
Similarly, the \defnb{linear sum} \. $P\oplus Q = (Z,\prec)$, where \ts $x\prec y$ \ts for every
two elements \ts $x \in X$ \ts and  \ts $y \in Y$ \ts and other relations as in the parallel sum.
For an element \ts $a\in X$, denote by \. $a  \!\uparrow \. := \ts \{x\in X \. : \. x\succcurlyeq a\}$ \.
the \defn{upper order ideal} generated by~$a$.
For
a subset \ts $A\subseteq X$, denote by \ts $A\!\uparrow \. := \ts \cup_{a\in A} \, a\!\uparrow$ \. the
\defn{upper closure} of~$A$. We refer to \cite[Ch.~3]{EC} and \cite{Tro} for more on
poset definitions and notation.

We use capitalized bold letters to denote matrices, e.g.\ \ts $\bM=(\aM_{ij})$.  We also keep
conventional index notations, so, e.g., \. $\big( \bM+ \bM^2 \big)_{ij}$ \. is the $(i,j)$-th
entry of \. $\bM+\bM^2$.
Similarly, we use small bold letters to denote vectors, e.g.\ \ts
$\hb = (\ah_1,\ah_2,\ldots)$ \ts and \ts $\ah_i= (\hb)_i$.
For a subset \ts $S\subseteq [n]$,
the \defn{characteristic vector} \ts of~$S$  is the vector \. $\vb \in \Rb^n$ \. such that
\ts $\av_i=1$ \ts if \ts $i \in S$ and \ts $\av_i=0$ \ts if \ts $i \notin S$.
We denote by $\mathbf{0}$ the zero vector $(0,\ldots, 0)$.

%

\bigskip



\section{Examples and special cases}\label{s:app}

\smallskip

\subsection{Basic examples}\label{ss:app-simple}
Here we consider several simple observations on correlation inequalities that
apply to general posets.  We recommend reading these first, before
studying more elaborate examples with special families of posets.

\smallskip

\begin{ex} \label{ex:poset-corr-upper-two}
Suppose \ts $\min(P)=\{x,y\}$, i.e.\ \ts $x, y$  \ts are
 the only minimal elements of~$P$. We then have
\ts $e(P)  = e(P-x) + e(P-y)$.  The upper bound
of~\eqref{eq:poset-corr-tight} is straightforward in this case:
\begin{align*}
	 \frac{e(P) \.\cdot \. e(P-x-y)}{e(P-x) \.\cdot \.  e(P-y)}  \ = \
\frac{e(P-x-y)}{e(P-x)} \, + \, \frac{e(P-x-y)}{e(P-y)} \  \leq \ 2\ts,
\end{align*}
where the inequality follows from  \. $e(P-x-y) \ts \le \ts \min\big\{e(P-x), \ts e(P-y)\big\}$.
See also~$\S$\ref{ss:finrem-easy} for a further discussion of this proof.
\end{ex}

\smallskip

\begin{ex} \label{ex:poset-corr-upper-two-more}
Again, suppose \ts $\min(P)=\{x,y\}$.  In the notation of $\S$\ref{ss:intro-Stanley},  we have: \.
$$\Pb\big[ f(x)  =  1\big] \. = \. \frac{e(P-x)}{e(P)}\., \ \quad
\Pb\big[ f(x) >  1\big] \. = \.  \frac{e(P-y)}{e(P)} \ts \quad \text{and} \ts \quad
\Pb\big[f(x)  =   2\big]  \. = \. \frac{e(P-x-y)}{e(P)}\ts.
$$
Then \eqref{eq:Stanley-cor2}  becomes \.
$$
\frac{e(P-x-y)  \.\cdot \.  e(P)}{e(P-x)  \.\cdot \.  e(P-y)} \, \ge \, 1\ts.
$$
This inequality follows from and is asymptotically equivalent to the lower bound
in~\eqref{eq:poset-corr-tight}.
\end{ex}

\smallskip

\begin{ex} \label{ex:poset-narrow-closure}
For a subset \ts $A\subset \min(P)$ \ts of minimal elements,
define \. $A^\bd$ \. as follows:
$$
 A^\bd \, := \, \big\{\ts x\in X \ : \ x \succcurlyeq y, \ y \in \min(P) \ \Rightarrow \ y \in A \ts \big\}.
$$
Equivalently, \. $A^\bd  =  A\!\uparrow  \sm \.  B\!\uparrow$, where \ts $B:=\min(P)-A$.
Clearly, we have \ts $A^\bd \subseteq A\!\uparrow$, and sometimes the subset \ts $A^\bd$ \ts
is much smaller than \ts $A\!\uparrow$.  On the other hand, we have:
$$
\Pb\big[1 \in f(A), \ts 2\in f(A\!\uparrow)\big] \, = \, \Pb\big[1 \in f(A), \ts 2\in f(A^\bd)\big].
$$
This is because the only way we can have \ts $f(y)=2$ \ts and \ts $f(x)=1$ \ts for some \ts $x\in A$,
is if either \ts $y\in \min(P)$, or \ts $y$ \ts covers element~$x$ and no other element.
This argument is useful in trying understand
our correlation inequalities, as the next example shows.
\end{ex}

\smallskip

\begin{ex} \label{ex:poset-upper-closure}
Let \ts $A\subset \min(P)$ \ts and let \ts $B := \min(P)-A$\ts.
By applying \eqref{eq:Stanley-ext3}
to $A\!\uparrow$,  we get:
\begin{equation}\label{eq:poset-upper-ex1}
\Pb[1\in f(A), 2\notin f(A\!\uparrow)]^2 \ \geq \ \Pb[1 \in f(A), \ts 2\in f(A\!\uparrow)] \. \cdot \. \Pb[1 \in f(B), \ts 2\notin f(A\!\uparrow)]\..
\end{equation}
Alternatively,  apply \eqref{eq:poset-disjoint-logconcave-new} with this choice of $A,B$. We get:
\begin{equation}\label{eq:poset-upper-ex2}
\Pb\big[1\in f(A), 2 \in f(B)\big]^2 \ \geq \ \Pb\big[1\in f(A), \ts 2 \in f(A\!\uparrow)\ts\big] \. \cdot \. \Pb\big[1\in f(B), \ts 2 \in f(B\!\uparrow)\big]\ts. \quad \ \
\end{equation}
At first sight, the LHS of \eqref{eq:poset-upper-ex1} $\ge$ LHS of \eqref{eq:poset-upper-ex2}, while
the RHS of \eqref{eq:poset-upper-ex1} $\le$ RHS of \eqref{eq:poset-upper-ex2}. The argument in
the previous example shows that both of these are equalities.  In other words,
the inequality \eqref{eq:Stanley-ext3} is equivalent to the inequality
\eqref{eq:poset-disjoint-logconcave-new} in this case.
\end{ex}

\medskip

\subsection{Second moment bound}\label{ss:app-second}
%
%
In Theorem~\ref{t:poset-covariances}, letting \ts $y= x$ \ts
gives the following curious bound on the second moment:

\smallskip

\begin{cor}\label{c:poset-variances}
Let \ts $P=(X,\prec)$ \ts be a finite poset, and let \ts $x \in X$ \ts be a fixed element.
Then:
\begin{equation}\label{eq:poset-exp}
1 \, \le \	\frac{\Eb[f(x)^2 ]}{\Eb[f(x)]^2} \ < \, 2 \ts.
\end{equation}
\end{cor}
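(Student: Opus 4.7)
The plan is to derive this directly from Theorem~\ref{t:poset-covariances} by substituting \ts $y = x$, then combine with a trivial variance lower bound. Since the statement of Theorem~\ref{t:poset-covariances} only requires that \ts $x,y \in X$ \ts be fixed poset elements (with no requirement that they be distinct), we may simply set \ts $y := x$.

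With \ts $y = x$, we have \. $\min\{f(x), f(y)\} = f(x)$ \. pointwise, so \. $\Eb\big[\min\{f(x), f(y)\}\big] = \Eb[f(x)]$. Moreover, \ts $\Eb[f(x) \ts f(y)] = \Eb[f(x)^2]$ \ts and the denominator becomes \ts $\Eb[f(x)]^2$. The inequality \eqref{eq:poset-cov} then rearranges to
\[
\frac{\Eb[f(x)^2]}{\Eb[f(x)]^2} \, \le \, 2 \, - \, \frac{1}{\Eb[f(x)]}\ts.
\]
Since every linear extension satisfies \ts $f(x) \ge 1$, we have \ts $\Eb[f(x)] \ge 1$, so \ts $1/\Eb[f(x)] > 0$, which yields the strict upper bound \ts $< 2$ \ts in \eqref{eq:poset-exp}.

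For the lower bound, I would just apply the Cauchy--Schwarz inequality (or equivalently the nonnegativity of variance) to the random variable \ts $f(x)$: since \. $\Varb(f(x)) = \Eb[f(x)^2] - \Eb[f(x)]^2 \ge 0$, we get \. $\Eb[f(x)^2] \ge \Eb[f(x)]^2$, giving the ratio \ts $\ge 1$.

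There is essentially no obstacle here --- the entire content is hidden in Theorem~\ref{t:poset-covariances}, and the corollary is a mechanical substitution together with an elementary observation about the range of \ts $f(x)$. The only nuance worth noting is the strict inequality on the right, which is a consequence of \ts $f$ \ts taking positive integer values rather than something more subtle.
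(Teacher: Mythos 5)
Your proof is correct and matches the paper's approach exactly: the paper also derives the corollary by setting $y = x$ in Theorem~\ref{t:poset-covariances}, and notes that the lower bound is simply the nonnegativity of variance. Your rearrangement to $\Eb[f(x)^2]/\Eb[f(x)]^2 \le 2 - 1/\Eb[f(x)]$ makes explicit why the upper bound is strict, which the paper leaves implicit.
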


\smallskip

Here the lower bound is trivial:  \ts $0\le \Varb(Z) = \Eb[Z^2] -\Eb[Z]^2$ \ts for every random variable~$Z$.
Note also that the corollary immediately implies a weaker version of \eqref{eq:poset-cov}:
\begin{equation}\label{eq:poset-cov-more}
\frac{\Eb[f(x) \ts f(y)]}{\Eb[f(x)] \.\cdot\. \Eb[f(y)]} \ < \, 2 \ts.
\end{equation}
This follows by combining the upper bound in \eqref{eq:poset-exp} with \. $\Cov(X,Y)\le \Varb(X) \cdot \Varb(Y)$.

\smallskip

Note that our proof of Theorem~\ref{t:poset-covariances} follows from a construction similar
to that used in~\cite{CP1} to rederive Stanley Theorem~\ref{t:poset-Stanley}.
Since Corollary~\ref{c:poset-variances} is a simple consequence of
Theorem~\ref{t:poset-covariances}, let us show that it follows  from
Stanley theorem as well.

\smallskip

\begin{prop}\label{p:log-Petrov}
Let \ts $Z$ \ts be a random variable on \ts $\{1,2,\ldots\}$ \ts with a
log-concave distribution.  Then \. $\Eb[Z^2] \ts \le \ts 2\ts \Eb[Z]^2$.
\end{prop}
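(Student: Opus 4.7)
My plan is to pass to the survival function $q_k := \Pb[Z \ge k]$ (so $q_1 = 1$) and exploit its log-concavity via a double-counting identity. The first step is to verify that log-concavity of $p_k := \Pb[Z = k]$ is inherited by $(q_k)$. This follows from the identity
\[
q_k^2 \. - \. q_{k-1}\ts q_{k+1} \ = \ p_{k-1}\ts p_k \. + \. q_k\ts (p_k - p_{k-1}),
\]
which is manifestly nonnegative when $p_k \ge p_{k-1}$; in the complementary case, setting $\rho := p_k/p_{k-1} < 1$ and using that log-concavity makes $(p_{j+1}/p_j)$ non-increasing, one gets $p_{k+m} \le \rho^m p_k$, so summing the geometric tail gives $q_k(p_{k-1}-p_k) \le p_{k-1}p_k$, i.e.\ $q_k^2 \ge q_{k-1}q_{k+1}$.

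Given log-concavity of $(q_k)$, the ratios $\lambda_j := q_{j+1}/q_j \in [0,1]$ are non-increasing. Using $\sum_{k=1}^{j}(2k-1) = j^2$, the moments translate to
\[
\mu \, := \, \Eb[Z] \, = \, \sum_{k \ge 1} q_k, \qquad \Eb[Z^2] \, = \, \sum_{k \ge 1}(2k-1) q_k \, = \, 2\sum_{k \ge 1} k\ts q_k \. - \. \mu,
\]
so the claim $\Eb[Z^2] \le 2\mu^2$ reduces to $\sum_{k \ge 1} k q_k \le \mu^2 + \mu/2$, and in fact the stronger bound $\sum_{k \ge 1} k q_k \le \mu^2$ will follow for free. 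To prove this, swap the order of summation:
\[
\sum_{k \ge 1} k\ts q_k \, = \, \sum_{j \ge 1} \sum_{k \ge j} q_k \, = \, \sum_{j \ge 1} q_j\ts \mu_j, \qquad \mu_j \, := \, \sum_{m \ge 0} \lambda_j\ts \lambda_{j+1}\cdots \lambda_{j+m-1}
\]
(empty product equal to~$1$). The termwise inequality $\lambda_j \lambda_{j+1}\cdots \lambda_{j+m-1} \le \lambda_1 \lambda_2 \cdots \lambda_m$, immediate from monotonicity of $(\lambda_j)$, yields $\mu_j \le \mu_1 = \mu$. Thus $\sum_k k q_k \le \mu \sum_j q_j = \mu^2$, giving the sharper $\Eb[Z^2] \le 2\mu^2 - \mu \le 2\mu^2$, with the factor~$2$ asymptotically tight for geometric distributions.

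The main technical input is the first step: the inheritance of log-concavity by the survival function. It genuinely uses log-concavity (not merely unimodality), since the geometric tail bound is what is needed to absorb the single potentially negative term in the displayed identity. Once that is secured, the remaining double-counting argument is elementary.
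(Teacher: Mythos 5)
Your proof is correct, and it takes a genuinely different route from the paper's. The paper proves this as Lemma~\ref{l:log-Petrov}: it writes \ts $2(\sum k p_k)^2 - (\sum p_k)(\sum k^2 p_k)$ \ts directly as a sum over pairs $(i,j)$, groups the terms \ts $p_i p_j$ \ts by the antidiagonal \ts $i+j$, and shows each group is nonnegative by combining three facts: the coefficients along each antidiagonal change sign exactly once (negative to positive), their total is positive, and the products \ts $p_1 p_k \le p_2 p_{k-1} \le \cdots$ \ts are ordered by log-concavity, so the positive tail dominates. Your argument instead passes to the survival function \ts $q_k = \Pb[Z\ge k]$, first establishing its log-concavity (the identity \ts $q_k^2 - q_{k-1}q_{k+1} = p_{k-1}p_k + q_k(p_k - p_{k-1})$ \ts together with the geometric tail bound \ts $q_k(p_{k-1}-p_k)\le p_{k-1}p_k$ \ts is correct and is exactly what is needed in the non-obvious case), and then exploits the monotonicity of the ratios \ts $\lambda_j = q_{j+1}/q_j$ \ts to show that every tail sum \ts $\sum_{k\ge j} q_k = q_j\mu_j$ \ts satisfies \ts $\mu_j \le \mu_1 = \mu$, giving \ts $\sum_k k q_k \le \mu^2$. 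This buys you the strictly sharper bound \ts $\Eb[Z^2] \le 2\mu^2 - \mu$, which the paper's coefficient-bookkeeping does not extract; the paper's argument, on the other hand, avoids the auxiliary lemma on survival functions and works in one step directly on the PMF. Both are sound; yours is more conceptual and quantitatively a bit stronger, while the paper's is more self-contained.
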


\smallskip

Note that constant~$2$ in the proposition is tight, as shown by a geometric
random variable \ts $Z=$ Geo$(q)$, where we let \ts $q\to 1$.   We prove
Proposition~\ref{p:log-Petrov} in~$\S$\ref{ss:cor-proof-log-lemma}.
On the other hand, the constant~$2$ in \eqref{eq:poset-exp}
can be lowered in many cases.
\smallskip

\begin{ex}\label{ex:poset-43}
	Let \ts $P:=C_{n-1}+\{x\}$ \ts be the parallel sum of two chains of size $n-1$ and $1$.  We have:
	\[\frac{\Eb[f(x)^2 ]}{\Eb[f(x)]^2}
	\ = \ \frac{\frac{1}{n} \. \sum_{k=1}^n \. k^2}{\big(\frac{1}{n} \. \sum_{k=1}^n \. k \big)^2} \ = \    \frac{4n+2}{3n+3} \
	\to \ \frac{4}{3} \ \ \quad \text{as \ \, $n \to \infty$\ts.}
	\]
\end{ex}

\smallskip

This example motivated us to conjecture that the bound \ts $\frac43$ \ts is sharp.   
The following counterexample was given by Max Aires and Jeff Kahn.\footnote{July 2024, personal communication}

\smallskip

\begin{ex}\label{ex:poset-AK}
Let \ts $P:=C_{m}+C_{n-m}$ \ts be the parallel sum of two chains of size $m$ and $n-m$, 
such that \ts $m=o(n)$\ts.
Let $x$ be the minimum element in the chain $C_m$. We have:
\[
\Eb[f(x)] \, = \, \frac{1}{\binom{n}{m}} \. \sum_{k=1}^{n-m+1} \. k \binom{n-k}{m-1}
 \qquad \text{and}
\qquad  
\Eb[f(x)^2] \, = \, \frac{1}{\binom{n}{m}} \. \sum_{k=1}^{n-m+1} \. k^2 \binom{n-k}{m-1}.
\]
This gives
\[
\frac{\Eb[f(x)^2 ]}{\Eb[f(x)]^2}  \ \to \ 2 \ \ \quad \text{as \ \, $m, \. n \. \to \. \infty$\ts.}
	\]
In particular, this shows that the upper bound in \eqref{eq:poset-exp} and Theorem~\ref{t:poset-covariances} 
is asymptotically tight.
\end{ex}

\smallskip

\begin{ex}\label{ex:Kahn-zeta}
Proposition~\ref{p:log-Petrov} is sharp in full generality, but can be weak in other
instances. In the notation of $\S$\ref{ss:intro-disc}, let~$G$ be simple graph on $2n$ vertices.
Denote by \ts $\al_G(M)$ \ts be the size of a random matching~$M$,
and let \ts $\ze_G(M):=n-\al_G$ \ts be half of the number of vertices not covered by~$M$.
The Heilmann--Lieb theorem proves that \ts $\ze_G$ \ts has a log-concave distribution.
Proposition~\ref{p:log-Petrov} then implies \.
$\Varb(\ze_G) = \Eb\big[\ze_G^2\big] - \Eb[\ze_G]^2 <  \ts \Eb[\ze_G]^2$.
On the other hand, \cite[Cor.~4.2]{Kahn} gives a much
stronger bound \. $\Varb(\ze_G) \le \Eb[\ze_G]$, which can then be used to obtain
concentration inequalities (see e.g.\ \cite[Ch.~7]{AS}).
\end{ex}

\smallskip

\subsection{Posets with unique covers of minimal elements}\label{ss:app-general}
%
%
%
%
For elements \ts $x,y\in X$ \ts in a poset $P=(X,\prec)$,
we say that element $y$ \defn{covers} \ts $x$, if \ts $x\prec y$,
and there is no \ts $v\in X$ \ts s.t.\ $x\prec v \prec y$.
For elements \ts $x\prec y$ \ts in~$X$, we say that $y$ is a
\defn{unique cover} \ts of~$x$, if
$$(\lozenge) \quad \text{$y$ \ts covers \ts $x$ \ts and does not cover any other elements in~$X$.}
$$
The following correlation inequality is surprising even
in the most simple special cases (see below).

\smallskip

\begin{cor}\label{c:poset-disjoint-cor}
Let \ts $P=(X,\prec)$ \ts be a finite poset, and let \ts $x,y\in \min(P)$ \ts be distinct
minimal elements.  Suppose element \ts $v\in X$ \ts is a unique cover of~$x$, and \ts $w \in X$ \ts
is a  unique cover of~$y$. Then:
\begin{equation}\label{eq:poset-disjoint-logconcave-var}
	  e(P-x-y)^2 \ \geq \ e(P-x-v) \.\cdot \. e(P-y-w).
\end{equation}
\end{cor}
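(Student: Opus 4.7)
The plan is to derive Corollary~\ref{c:poset-disjoint-cor} as a direct consequence of Theorem~\ref{t:poset-disjoint-logconcave} applied to the singleton subsets $A=\{x\}$ and $B=\{y\}$. Since $x,y\in\min(P)$ are distinct, these form disjoint nonempty subsets of $\min(P)$, so the hypothesis of the theorem is met.

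First, I would rewrite the LHS of \eqref{eq:poset-disjoint-logconcave-new}. Because $x$ and $y$ are both minimal in $P$, the placement $f(x)=1,\, f(y)=2$ is always realizable, so
\[
\Pb\big[1\in f(A),\ 2\in f(B)\big] \ = \ \Pb\big[f(x)=1,\ f(y)=2\big] \ = \ \frac{e(P-x-y)}{e(P)}.
\]
Hence the LHS of \eqref{eq:poset-disjoint-logconcave-new} equals $e(P-x-y)^2/e(P)^2$, which is exactly the LHS of \eqref{eq:poset-disjoint-logconcave-var} after multiplying through by $e(P)^2$.

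Next, I would bound each factor on the RHS of \eqref{eq:poset-disjoint-logconcave-new} from below using the unique-cover hypothesis. The key structural observation is that, since $v$ covers $x$ and covers no other element, while $x\in\min(P)$, any $u\prec v$ in $P$ must satisfy $u\preccurlyeq x$, and minimality of $x$ forces $u=x$. Thus the only predecessor of $v$ in $P$ is $x$, which means $v$ is minimal in $P-x$. Therefore $f(v)=2$ is feasible given $f(x)=1$, and since $v\in x\!\uparrow$,
\[
\Pb\big[1\in f(A),\ 2\in f(A\!\uparrow)\big] \ = \ \Pb\big[f(x)=1,\ 2\in f(x\!\uparrow)\big] \ \ge \ \Pb\big[f(x)=1,\ f(v)=2\big] \ = \ \frac{e(P-x-v)}{e(P)}.
\]
Symmetrically, $\Pb\big[1\in f(B),\ 2\in f(B\!\uparrow)\big]\ge e(P-y-w)/e(P)$.

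Combining Theorem~\ref{t:poset-disjoint-logconcave} with these two lower bounds yields
\[
\frac{e(P-x-y)^2}{e(P)^2} \ \ge \ \frac{e(P-x-v)}{e(P)} \,\cdot\, \frac{e(P-y-w)}{e(P)},
\]
and \eqref{eq:poset-disjoint-logconcave-var} follows after clearing denominators. There is no real obstacle here: the only substantive point is the structural remark that a unique cover of a minimal element has exactly one predecessor in $P$; the rest is bookkeeping. Philosophically, the corollary is interesting because the RHS replaces deletions of minimal elements by deletions of single covers, but via the unique-cover reduction this is engineered precisely so that Theorem~\ref{t:poset-disjoint-logconcave} applies cleanly.
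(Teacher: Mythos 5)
Your proof is correct and takes essentially the same route as the paper: apply Theorem~\ref{t:poset-disjoint-logconcave} with $A=\{x\}$, $B=\{y\}$, identify the left side exactly with $e(P-x-y)/e(P)$, and lower-bound each factor on the right by $\Pb[f(x)=1, f(v)=2] = e(P-x-v)/e(P)$ (resp.\ $e(P-y-w)/e(P)$) using the unique-cover hypothesis. The only difference is cosmetic: you spell out the structural reason why $\{z : z\prec v\}=\{x\}$ (hence why the equality $\Pb[f(x)=1, f(v)=2]=e(P-x-v)/e(P)$ holds), whereas the paper compresses this to an appeal to the defining property $(\lozenge)$ of unique covers.
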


\begin{proof}
In the notation of Theorem~\ref{t:poset-disjoint-logconcave},
let \ts $A=\{x\}$ \ts and \ts $B=\{y\}$.  Note that
$$
\Pb\big[1\in f(A), \ts 2 \in f(A\!\uparrow)\ts\big] \, = \, \Pb\big[f(x)=1, \ts f^{-1}(2) \succ x\ts\big]
\, \geq  \, \Pb\big[f(x)=1, \ts f(v)=2\ts\big] \, = \, \frac{e(P-x-v)}{e(P)}\..
$$
Here the inequality follows from the definition of~$v$ as an  element which satisfies~$(\lozenge)$.
By the same argument we have:
$$
\Pb\big[1\in f(B), \ts 2 \in f(B\!\uparrow)\big] \, \geq  \, \frac{e(P-y-w)}{e(P)}\..
$$
On the other hand, we also have:
$$
\Pb\big[1\in f(A), \ts 2 \in f(B)\big] \, = \, \frac{e(P-x-y)}{e(P)}\..
$$
Now \eqref{eq:poset-disjoint-logconcave-new} implies the result.
\end{proof}

\smallskip

We conclude this section with another four-element inequality:

\smallskip

\begin{cor}\label{c:poset-disjoint-three}
Let \ts $P=(X,\prec)$ \ts be a finite poset, and let \ts $x,y,z\in \min(P)$ \ts
be distinct minimal elements.   Suppose element \ts $u\in X$ \ts is a unique
cover of~$z$.  Then:
\begin{equation}\label{eq:poset-disjoint-three}
	  e(P-u-z) \. e(P-x-y) \ \leq \ 2 \. e(P-x-z) \. e(P-y-z).
\end{equation}
\end{cor}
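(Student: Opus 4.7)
The plan is to apply Theorem~\ref{t:poset-disjoint-half-correlation}, specifically inequality~\eqref{eq:Stanley-ext-12}, with the singleton choices $A=\{x\}$, $B=\{y\}$, $C=\{z\}$. This parallels the strategy used in the proof of Corollary~\ref{c:poset-disjoint-cor}, where Theorem~\ref{t:poset-disjoint-logconcave} was invoked on singletons and property $(\lozenge)$ of a unique cover was used to convert a probability involving an upper closure into a count.

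First, I would translate the four probabilities appearing in~\eqref{eq:Stanley-ext-12} into ratios of the form $e(\cdot)/e(P)$. For three of them this is an exact identity:
\begin{align*}
\Pb\big[1\in f(A),\ts 2 \in f(B)\big] &\, = \, e(P-x-y)/e(P), \\
\Pb\big[1\in f(A),\ts 2 \in f(C)\big] &\, = \, e(P-x-z)/e(P), \\
\Pb\big[1\in f(B),\ts 2 \in f(C)\big] &\, = \, e(P-y-z)/e(P).
\end{align*}
For the remaining factor I would observe, exactly as in the proof of Corollary~\ref{c:poset-disjoint-cor}, that
$$
\Pb\big[1\in f(C),\ts 2 \in f(C\!\uparrow)\big] \, = \, \Pb\big[f(z)=1,\ts f^{-1}(2)\succ z\big] \, \geq \, \Pb\big[f(z)=1,\ts f(u)=2\big] \, = \, \frac{e(P-u-z)}{e(P)},
$$
where the inequality uses property $(\lozenge)$: since $u$ covers only $z$, the element $u$ becomes minimal in $P-z$, so assigning $f(z)=1,\ts f(u)=2$ produces a valid linear extension, and $u\succ z$ places this event inside $\{f^{-1}(2)\succ z\}$.

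Plugging these four expressions into~\eqref{eq:Stanley-ext-12} and clearing the common factors of $e(P)$, I would obtain
$$
\frac{e(P-u-z)\,\cdot\, e(P-x-y)}{e(P-x-z)\,\cdot\, e(P-y-z)} \ \leq \ 2\ts,
$$
which is precisely~\eqref{eq:poset-disjoint-three}. Note that the direction of the $(\lozenge)$-bound is the one we need: bounding $\Pb[1\in f(C),2\in f(C\!\uparrow)]$ from below bounds the numerator of the left-hand side of~\eqref{eq:Stanley-ext-12} from below, so the theorem's upper bound of $2$ transfers to our desired ratio.

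There is essentially no obstacle here beyond bookkeeping, since Theorem~\ref{t:poset-disjoint-half-correlation} does all the heavy lifting; the only subtle point is making sure one uses the first inequality~\eqref{eq:Stanley-ext-12} (which holds for the given labelling of $A,B,C$) rather than~\eqref{eq:Stanley-ext-12-prime}, because the role of $C$ must be played by $\{z\}$—the element whose unique cover $u$ is available to convert the $C\!\uparrow$ probability into a count involving $P-u-z$.
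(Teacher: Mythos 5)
Your proof is correct and follows exactly the paper's own approach: the paper's proof says precisely to take $A=\{x\}$, $B=\{y\}$, $C=\{z\}$ in Theorem~\ref{t:poset-disjoint-half-correlation} and then reuse the unique-cover argument of Corollary~\ref{c:poset-disjoint-cor}. Your additional observation that one must use inequality~\eqref{eq:Stanley-ext-12} (which holds for the given labelling) rather than~\eqref{eq:Stanley-ext-12-prime}, and that the $(\lozenge)$-bound goes in the needed direction (a lower bound on the numerator yields an upper bound on the displayed ratio), is exactly the bookkeeping the paper leaves implicit.
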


\begin{proof}
Let \ts $A=\{x\}$, \ts $B=\{y\}$ \ts and \ts $C=\{z\}$.  The corollary now follows
from Theorem~\ref{t:poset-disjoint-half-correlation} and the argument in the proof
of the Corollary~\ref{c:poset-disjoint-cor}.
\end{proof}

\bigskip

\section{Enumerative applications}\label{s:enum}


\subsection{Standard Young tableaux} \label{ss:app-def}
Let \ts $\la=(\la_1,\ldots,\la_\ell)$ \ts be an integer partition of~$n$,
write \ts $\la \vdash n$.  Here \ts $\ell(\la)$ \ts denotes the number of parts.
A Young diagram is a set \. $Y_\la:=\big\{(i,j)\in \nn^2 \. : \. 1\le j \leq  \la_i,
\ts 1\le i \le \ell\big\}$. A \defn{standard Young tableau} \ts of shape $\la$ \ts
is a bijection \ts $f: Y_\la\to [n]$ \ts which increases in both directions.
Denote by \ts $\SYT(\la)$ \ts the set of standard Young tableaux of shape~$\la$.
The number \ts $|\SYT(\la)|$ \ts can be computed by the
\defng{hook-length formula}, see e.g.\ \cite[$\S$7.21]{EC}.

A \defn{conjugate partition} \ts $\la'=(\la_1',\la_2',\ldots)$ \ts is defined by
\ts $\la_i' = |\{i\.: \la_i\ge i\}$.  Note that Young diagrams \ts $Y_\la$ \ts
and \ts $Y_{\la'}$ \ts are dual with respect to the \ts $i=j$ \ts reflection.
Partition \ts $\la$ \ts is called \defn{self-conjugate} \ts  if \ts $\la=\la'$.
For partitions \ts $\la$ \ts and \ts $\mu$, define the \defn{sum} \.
$\la+\mu := (\la_1+\mu_1, \la_2+\mu_2, \ldots)$ \ts and the \defn{union} \.
$\la\cup \mu := (\la'+\mu')'$.

Two squares \ts $x\prec y$ \ts are called \defn{adjacent} \ts if \ts $x=(i,j)$ \ts and \ts
$y=(i,j+1)$ \ts or \ts $y=(i+1,j)$.
The \defn{corners} \ts of~$\la$ are defined as elements \. $(i,j)\in Y_\la$ \. such that
\. $(i+1,j), (i,j+1)\notin Y_\la$\ts.
Denote by \ts $\Cc(\la)$ \ts the set of corners of~$\la$.  Similarly, the
\defn{boundary squares} \ts of~$\la$ are defined as elements \. $(i,j)\in Y_\la$ \. such that either
\ts $(i+1,j)$ \ts or \ts $(i,j+1)$ \ts is not in \ts $Y_\la$\ts.  Denote by \ts $\Dc(\la)$ \ts
the set of boundary squares of~$\la$, and note that \. $\Cc(\la) \subseteq \Dc(\la)$.

Let \ts $\mu=(\mu_1,\mu_2,\ldots)$ \ts be a partition such that \.
$\mu_i \le \la_i$ \. for all \ts $0\le i \le \ell$.  The set \.
$Y_{\la/\mu} := Y_\la \sm Y_\mu$ \. is the
\defng{skew Young diagram} \ts  of (skew) shape \ts $\la/\mu$.  Standard Young
tableaux of skew shapes are defined similarly to the usual (straight) shapes.
The number \ts $|\SYT(\la/\mu)|$ \ts  of standard Young tableaux
of shape $\la/\mu$ \ts can be computed by the
\defng{Aitken--Feit determinant formula}, see e.g.~\cite[$\S$7.16]{EC}.
The \defn{corners} \ts of~$\la/\mu$ are defined as elements \. $(i,j)\in Y_{\la/\mu}$ \. such that
\. $(i+1,j), (i,j+1)\notin Y_{\la/\mu}$.

\medskip

\subsection{Removing the corners}\label{ss:app-Young}
We now apply the correlation inequalities to posets corresponding to skew Young diagrams.
As we do, we get both new and familiar inequalities (see below).

\smallskip

\begin{cor} \label{c:YT1}
Let $\la/\mu$ be a skew shape, let \ts $x,y\in \Cc(\la/\mu)$ \ts be
corners, and let \ts $v,w\in \Dc(\la)$ \ts be a boundary square adjacent
to~$x$ and~$y$, respectively. Then:
\begin{equation}\label{eq:Stanley-SYT}
|\SYT(\la/\mu-x-y)|^2 \ \ge \ |\SYT(\la/\mu-x-v)| \.\cdot \. |\SYT(\la/\mu-y-w)|\ts.
\end{equation}
In particular, if \ts $\la$ \ts and \ts $\mu$ \ts are self-conjugate,
\ts $x=(i,j)$ \ts and \ts $y=(j,i)$, then:
\begin{equation}\label{eq:Stanley-SYT-sc}
|\SYT(\la/\mu-x-y)| \, \ge \, |\SYT(\la/\mu-x-v)|\ts.
\end{equation}
\end{cor}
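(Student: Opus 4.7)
The plan is to translate \eqref{eq:Stanley-SYT} and \eqref{eq:Stanley-SYT-sc} into statements about linear extensions of a poset on the skew Young diagram, and then apply Corollary~\ref{c:poset-disjoint-cor}.

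Recall that $\SYT(\la/\mu)$ is in natural bijection with $\Ec(P_{\la/\mu})$, where $P_{\la/\mu}$ denotes the poset on $Y_{\la/\mu}$ with $(a,b) \preceq (c,d)$ iff $a \le c$ and $b \le d$. Since $e(P) = e(P^*)$, I would work with the dual poset $Q := P^*_{\la/\mu}$, in which the corners $\Cc(\la/\mu)$ are precisely the minimal elements. To prove \eqref{eq:Stanley-SYT}, I would apply Corollary~\ref{c:poset-disjoint-cor} to $Q$ with the distinct minimal elements $x, y \in \Cc(\la/\mu)$, and identify $v$ and $w$ as unique covers of $x$ and $y$ in $Q$.

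The key verification is that a boundary square $v \in \Dc(\la)$ adjacent to $x = (i,j)$ (and lying in $Y_{\la/\mu}$) is a unique cover of $x$ in $Q$; equivalently, $x$ covers $v$ in $P_{\la/\mu}$ and $x$ is the only cell in $Y_{\la/\mu}$ covering $v$. Since $x$ is a corner of $\la$, the cells $(i+1,j)$ and $(i,j+1)$ lie outside $Y_\la$, so the adjacency of $v$ and $x$ combined with $v \in Y_\la$ forces $v \in \{(i-1,j),\,(i,j-1)\}$. In either case, the boundary-square condition on $v$ (together with $x \in Y_\la$) forces the second potential cover of $v$ in $P_{\la/\mu}$---namely $(i-1,j+1)$ or $(i+1,j-1)$---to lie outside $Y_\la$, and therefore outside $Y_{\la/\mu}$. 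Thus $x$ is the unique cover of $v$ in $Q$; an analogous verification applies to $w$ and $y$. Under the standard identifications $e(Q - x - y) = |\SYT(\la/\mu - x - y)|$ and similarly for the other two terms (using that once $x$ is removed, $v$ becomes a corner of $\la/\mu - x$), Corollary~\ref{c:poset-disjoint-cor} yields \eqref{eq:Stanley-SYT}.

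For \eqref{eq:Stanley-SYT-sc}, I would exploit the conjugation symmetry. When $\la,\mu$ are self-conjugate, the involution $(a,b) \mapsto (b,a)$ is an automorphism of $P_{\la/\mu}$ swapping $x = (i,j)$ with $y = (j,i)$; choosing $w$ to be the image of $v$ under this involution produces a boundary square of $\la$ adjacent to $y$ satisfying $|\SYT(\la/\mu - y - w)| = |\SYT(\la/\mu - x - v)|$. Substituting into \eqref{eq:Stanley-SYT} gives $|\SYT(\la/\mu - x - y)|^2 \ge |\SYT(\la/\mu - x - v)|^2$, and taking square roots yields \eqref{eq:Stanley-SYT-sc}. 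The main technical point of the whole argument is the geometric unique-cover verification, but this reduces to the short case analysis above and is not the deep part of the proof.
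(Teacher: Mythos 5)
Your proof is correct and follows essentially the same route as the paper: pass to the dual poset $\cP^{\ast}_{\la/\mu}$ where corners are minimal, check that $v,w$ are unique covers, and invoke Corollary~\ref{c:poset-disjoint-cor}, then use the conjugation involution for the self-conjugate case. The only difference is that you spell out the unique-cover verification (which the paper merely asserts), and your case analysis for that step is correct.
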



\begin{figure}[hbt]
\begin{center}
	\includegraphics[height=3.2cm]{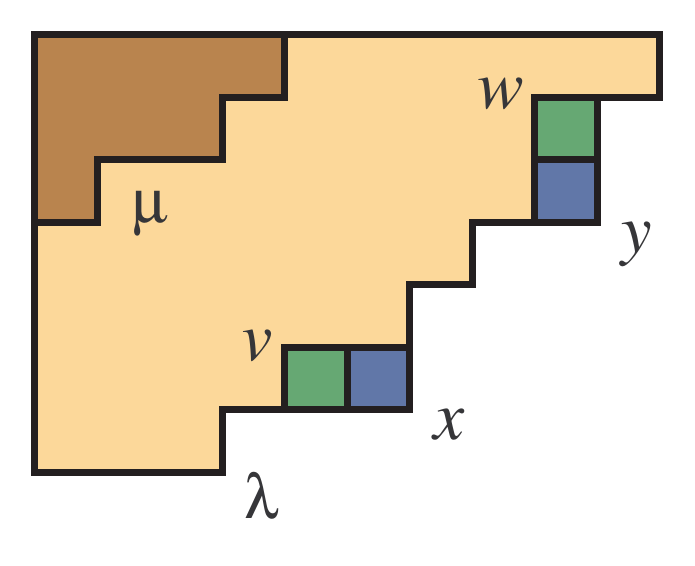}
\vskip-.3cm
\end{center}
\caption{Skew shape \ts $\la/\mu$, where \ts $\la=(10,9,9,7,6,6,3)$, \ts $\mu=(4,3,1)$,
	corners \ts $x,y\in \Cc(\la)$ and boundary squares \ts $v,w\in \Dc(\la)$.}
\label{f:YT}
\end{figure}


\begin{proof}
Let poset \ts $\cP_{\la/\mu}=(Y_{\la/\mu},\prec)$ \ts
be defined by \. $(i,j) \preccurlyeq (i',j')$ \. if \ts $i\le i'$ \ts
and \ts $j\le j'$.  
The set of linear extensions \ts $\Ec(\cP_{\la/\mu})$ \ts is in bijection with $\SYT(\la/\mu)$,
so \ts $e(\cP_{\la/\mu}) = |\SYT(\la/\mu)|$.
Note that  minimal elements in the dual poset \ts $\cP_\la^\ast$ \ts are the corners of~$\la$: \.
$\min(\cP_\la^\ast) = \Cc(\la)$. Similarly, for skew shapes
we have: \. $\min(\cP_{\la/\mu}^\ast) = \Cc(\la)\sm \Cc(\mu)$.

In notation of~$\S$\ref{ss:app-general}, note that \ts $v$ \ts and \ts $w$ \ts
are unique covers of \ts $x$ \ts and \ts $y$, respectively.  Now Corollary~\ref{c:poset-disjoint-cor}
gives \eqref{eq:Stanley-SYT}.
For the second part, let \ts $v=(p,q)$ \ts and take \ts $w:=(q,p)$.  Now~\eqref{eq:Stanley-SYT}
implies \eqref{eq:Stanley-SYT-sc}.
\end{proof}

\smallskip

\begin{cor} \label{c:YT-log}
Let \ts $\la=(\la_1,\ldots,\la_\ell)$ \ts be a partition.
We have the following log-concave inequality:
\begin{equation}\label{eq:Stanley-SYT-three}
\big|\SYT\big(\la/(a,1^b)\big)\big|^2  \ \ge \  \big|\SYT\big(\la/(a+1,1^{b-1})\big)\big| \.\cdot \.
\big|\SYT\big(\la/(a-1,1^{b+1})\big)\big|\ts,
\end{equation}
for all \ts $1< a < \la_1$ \ts and \ts $1< b < \ell$.
\end{cor}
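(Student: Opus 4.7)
The plan is to derive \eqref{eq:Stanley-SYT-three} by a single application of Corollary~\ref{c:poset-disjoint-cor} to the skew-shape poset $P := \cP_{\la/\mu_0}$, where the auxiliary ``core'' inner partition is $\mu_0 := (a-1,\ts 1^{b-1})$. The point is that the three hook shapes appearing in the statement all grow from $\mu_0$ by adding exactly two cells: adjoining $\{(1,a),(b+1,1)\}$ gives $(a,1^b)$, adjoining $\{(1,a),(1,a+1)\}$ gives $(a+1,1^{b-1})$, and adjoining $\{(b+1,1),(b+2,1)\}$ gives $(a-1,1^{b+1})$. Consequently the three SYT counts in \eqref{eq:Stanley-SYT-three} will match $e(P-x-y)$, $e(P-x-v)$, and $e(P-y-w)$ respectively, for
\[
x=(1,a), \quad y=(b+1,1), \quad v=(1,a+1), \quad w=(b+2,1).
\]

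Before invoking the corollary, I would dispose of the boundary case $b=\ell-1$: then $(a-1,1^{b+1})$ has $\ell+1$ nonzero parts, cannot be contained in $\la$, and so $|\SYT(\la/(a-1,1^{b+1}))|=0$, making the inequality trivial. In the remaining case $b+2 \leq \ell$, the hypotheses $1<a<\la_1$ and $1<b$ ensure that $\mu_0 \subseteq \la$ and that all four cells $x,y,v,w$ lie in $\la$. I would then verify the two hypotheses of Corollary~\ref{c:poset-disjoint-cor}. First, $x$ and $y$ are distinct minimal elements of $P$: the would-be northern/western neighbors $(1,a-1)$ and $(b,1)$ both belong to $\mu_0$, so neither lies in $\la/\mu_0$. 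Second, $v$ is a unique cover of $x$ in the sense of $(\lozenge)$ from~$\S$\ref{ss:app-general}, because the only cell adjacent to $v=(1,a+1)$ and strictly below it in $\la$ is $x=(1,a)$ itself (the cell $(0,a+1)$ does not exist). The same hook-geometry argument shows $w=(b+2,1)$ is a unique cover of $y=(b+1,1)$, since the only cell strictly below $w$ in $\la$ adjacent to it is $y$.

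With these verifications in hand, Corollary~\ref{c:poset-disjoint-cor} delivers
\[
e(P-x-y)^2 \ \geq \ e(P-x-v) \cdot e(P-y-w),
\]
which, upon the dictionary above, is exactly \eqref{eq:Stanley-SYT-three}. I do not anticipate a substantive obstacle: the entire argument hinges on spotting the common ``sub-hook'' $\mu_0 = (a-1,1^{b-1})$ from which the three partitions in \eqref{eq:Stanley-SYT-three} grow by two cells, plus the routine observation that the extra cells $(1,a+1)$ and $(b+2,1)$ sit at the extremal ends of row $1$ and column $1$, which forces them to be unique covers by the geometry of the Young diagram.
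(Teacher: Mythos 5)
Your proof is correct and takes essentially the same approach as the paper: apply Corollary~\ref{c:poset-disjoint-cor} to $P=\cP_{\la/\mu_0}$ with $\mu_0=(a-1,1^{b-1})$, $x=(1,a)$, $y=(b+1,1)$, $v=(1,a+1)$, $w=(b+2,1)$. The paper's printed proof writes $\mu=(a+1,1^{b+1})$, $y=(b,1)$, $w=(b+1,1)$, which appear to be misprints (those cells lie inside $\mu$, not in $Y_{\la/\mu}$); your indices are the correct ones, and your explicit treatment of the degenerate case $b=\ell-1$ is a detail the paper leaves implicit.
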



\begin{proof}
Following the argument in the proof above, let \ts $\cP_{\la/\mu}=(Y_{\la/\mu},\prec)$ \ts
where \ts $\mu = (a+1,1^{b+1})$, and let \ts $x=(1,a)$, \ts $y=(b,1)$, \ts $v=(1,a+1)$, \ts
$w=(b+1,1)$.   Now Corollary~\ref{c:poset-disjoint-cor} gives the result.
\end{proof}
\smallskip

\begin{cor} \label{c:YT2}
Let $\la/\mu$ be a skew shape, let \ts $x,y,z\in \Cc(\la/\mu)$ \ts be
corners, and let \ts $u\in \Dc(\la)$ \ts be a boundary square adjacent
to~$z$, respectively. Then:
\begin{equation}\label{eq:Stanley-SYT2-three}
\frac{|\SYT(\la/\mu-u-z)| \. \cdot \. |\SYT(\la/\mu-x-y)|}{
|\SYT(\la/\mu-x-z)| \.\cdot \. |\SYT(\la/\mu-y-z)|}  \ \le \ 2\ts.
\end{equation}
\end{cor}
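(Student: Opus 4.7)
The plan is to mimic the proof strategy of Corollary~\ref{c:YT1}, replacing the two-corner inequality (Corollary~\ref{c:poset-disjoint-cor}) by the three-corner inequality (Corollary~\ref{c:poset-disjoint-three}). Consider the poset $\cP_{\la/\mu}^\ast$ dual to the cell poset on $Y_{\la/\mu}$ ordered by $(i,j) \preccurlyeq (i',j')$ iff $i \le i'$ and $j \le j'$. Linear extensions of $\cP_{\la/\mu}^\ast$ are in bijection with $\SYT(\la/\mu)$, and removing any antichain $S$ of minimal elements from $\cP_{\la/\mu}^\ast$ corresponds to removing the same corners from $\la/\mu$, so $e(\cP_{\la/\mu}^\ast - S) = |\SYT(\la/\mu - S)|$.

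Next I would verify the hypotheses of Corollary~\ref{c:poset-disjoint-three}. Since $\min(\cP_{\la/\mu}^\ast) = \Cc(\la/\mu)$, the elements $x,y,z$ are distinct minimal elements of $\cP_{\la/\mu}^\ast$. It remains to check that $u$ is a unique cover of $z$ in $\cP_{\la/\mu}^\ast$, i.e.\ that in the original poset $\cP_{\la/\mu}$, the element $u$ is covered by $z$ and by no other element. Writing $z=(i,j)$ and (say) $u=(i,j-1)$ (the case $u=(i-1,j)$ is symmetric), the only possible competitor is $(i+1,j-1)$. The assumption $u\in\Dc(\la)$ combined with $(i,j)\in Y_\la$ forces $(i+1,j-1)\notin Y_\la$, hence also $(i+1,j-1)\notin Y_{\la/\mu}$, so $u$ is indeed a unique cover of $z$ in $\cP_{\la/\mu}^\ast$.

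Finally, applying Corollary~\ref{c:poset-disjoint-three} to the poset $\cP_{\la/\mu}^\ast$ with the triple $(x,y,z)$ and the unique cover $u$ of $z$ yields
\[
 e(\cP_{\la/\mu}^\ast - u - z) \cdot e(\cP_{\la/\mu}^\ast - x - y) \ \le\ 2\cdot e(\cP_{\la/\mu}^\ast - x - z)\cdot e(\cP_{\la/\mu}^\ast - y - z),
\]
which, after translating through the SYT bijection, is exactly \eqref{eq:Stanley-SYT2-three}. The argument is essentially mechanical once the parallel with Corollary~\ref{c:YT1} is identified; the only step requiring even a moment's care is the verification of the unique-cover condition for $u$, and here the main obstacle is simply being vigilant that $\Dc(\la)$ (rather than $\Dc(\la/\mu)$) is the right hypothesis to rule out the alternative cover $(i+1,j-1)$.
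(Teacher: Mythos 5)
Your proof is correct and follows precisely the route the paper has in mind: the paper's own proof of this corollary simply says ``follows from Corollary~\ref{c:poset-disjoint-three} along the same lines as the proofs above,'' and you have filled in exactly those lines. Passing to the dual poset $\cP_{\la/\mu}^\ast$, checking that $x,y,z$ are distinct minimal elements, verifying the unique-cover condition $(\lozenge)$ for $u$ above $z$ (where $u\in\Dc(\la)$ together with $z\in Y_\la$ rules out the only competing cover $(i+1,j-1)$), and then invoking Corollary~\ref{c:poset-disjoint-three} is exactly the intended argument, mirroring the proof of Corollary~\ref{c:YT1}.
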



The proof follows from Corollary~\ref{c:poset-disjoint-three} along the same lines
as the proofs above.  We omit the details.

\medskip

\subsection{Related inequalities}\label{ss:SYT-related}
Corollary~\ref{c:YT1} is closely related to two other inequalities.
The first is the \defn{Okounkov inequality} \ts conjectured
in \cite[p.~269]{Oko}, and proved by
Lam--Postnikov--Pylyavskyy \cite[Thm~4]{LPP07}.  In a special case,  
the Okounkov inequality gives:  
\begin{equation}\label{eq:Oko}
\big|\SYT\big(\tfrac{\la+\nu}2 \ts \big/ \ts \tfrac{\mu+\al}2 \big)\big|^2 \ \ge \ |\SYT(\la/\mu)| \.\cdot \. |\SYT(\nu/\al)|\ts,
\end{equation}
for all skew shapes \ts $\la/\mu$ \ts and \ts $\nu/\al$, such that  partitions \ts $\la+\nu$ \ts and \ts $\mu+\al$ \ts
have even parts, cf.~$\S$\ref{ss:finrem-Bjo} and~\cite{CP3}. 

In the notation of Corollary~\ref{c:YT1}, observe that for $v$ to the left of~$x$,
and for $w$ to the left of~$y$ (see Figure~\ref{f:YT}), the Okounkov inequality
coincides with our inequality \eqref{eq:Stanley-SYT}.  On the other hand,
even the inequality \eqref{eq:Stanley-SYT-sc} does not follow from this argument.

\smallskip

The second is the \defn{Fomin--Fulton--Li--Poon {\rm $($FFLP$)$} inequality} \ts conjectured in \cite[Conj.~2.7]{FFLP}, and  also proved in \cite[Thm~4]{LPP07}.  Again, in a special case,
this inequality gives:  
\begin{equation}\label{eq:FFLP}
\aligned
& {\big|\SYT\big( \sort_1(\la,\nu) \ts / \ts \sort_1(\mu,\al)\big)\big| \. \cdot \. \big|\SYT\big( \sort_2(\la,\nu) \ts / \ts \sort_2(\mu,\al)\big)\big|} \\
&  \hskip2.9cm \ge \ {|\SYT(\la/\mu)| \.\cdot \. |\SYT(\nu/\al)|\ts.}
\endaligned
\end{equation}
Here for two partitions \ts $\be$ \ts and $\ga$ \ts we define two other partitions \. $\sort_1(\be,\ga):= (\tau_1,\tau_3,\ldots)$ \. and
\. $\sort_2(\la,\mu):= (\tau_2,\tau_4,\ldots)$, where \. $(\tau_1,\tau_2,\ldots):=\be\cup \ga$.

In notation of Corollary~\ref{c:YT1}, observe that for $v$ above~$x$,
and for $w$ to the above~$y$, the FFLP inequality coincides with our
inequality \eqref{eq:Stanley-SYT}.  In this sense it is a conjugate dual
to the Okounkov inequality \eqref{eq:Oko}, cf.~\cite{LPP07}.  On the other
hand, for $v$ to the left of~$x$,
and for $w$  above~$y$, the LHS of \eqref{eq:FFLP} has partitions of unequal size:
\begin{equation}\label{eq:FFLP-corners} \quad
|\SYT(\la/\mu-y)| \. \cdot \. |\SYT(\la/\mu-x-y-v)| \ \ge \ |\SYT(\la/\mu-x-v)| \.\cdot \. |\SYT(\la/\mu-y-w)|\ts.
\end{equation}
In summary, the inequality \eqref{eq:FFLP-corners} follows from known inequalities 
in some cases (where it extends to inequalities for symmetric functions), and is 
new in other cases (where it does not).  

\medskip

\subsection{Hook walk}\label{ss:SYT-hook-walk}
Let \ts $f\in \SYT(\la)$ \ts be a random standard Young tableau
of shape \ts $\la\vdash n$. For a corner \ts $x\in \Cc(\la)$, denote by
$$
p(x) \, := \, \frac{|\SYT(\la-x)|}{|\SYT(\la)|}
$$
the probability distribution on \ts $\Cc(\la)$ \ts defined by the location
of~$n$ in~$f$.  The celebrated \defn{hook walk} \ts \cite{GNW79} shows how to
sample from \ts $p(x)$:

\smallskip

\qquad $\circ$ \ Start at a random square $y\in Y_\la$.

\qquad $\circ$ \ Move to a random square in the hook \ts $H(y)$.

\qquad $\circ$ \ Repeat until the walk reaches a corner $x\in \Cc(\la)$.

\smallskip

\noindent
Here \defn{the hook} \ts is defined as \.
$H(i,j) := \{(i,r) \. : \. j < r \le \la_i\} \cup \{(s,j) \. : \. i < s \le \la_j'\}$.
Let \ts $y\in \Cc(\la)$ \ts be a different corner.
One can think of \eqref{eq:poset-corr-tight-probab} as follows:
\begin{equation}\label{eq:HW}
  \frac{n}{n-1} \ \leq \ \frac{\Pb[f(y)=n-1 \.| \. f(x)=n]}{\Pb[f(y)=n]}  \ \leq \ 2.
\end{equation}
The lower bound in~\eqref{eq:HW} is now straightforward. Indeed, all hook walk
trajectories which arrive to $y$ in $(\la-x)$, also arrive in $\la$, and the
ratio is given by the ratio of sizes of diagrams in the starting location
of the walk.  The upper bound in \eqref{eq:HW} does not seem to follow 
directly from this probabilistic model.

%



\medskip

\subsection{Bruhat order} \label{ss:app-Bruhat}
Let \ts $\si\in S_n$ \ts and define the \defn{permutation poset} \ts $P_\si = ([n],\prec)$
\ts by letting
$$i \precc j \quad \Leftrightarrow \quad i\le j \ \ \text{and} \ \ \si(i)\le \si(j) \ts.
$$
It is easy to see that \ts $\Ec(P_\si)\subseteq S_n$ \ts is the lower
ideal \ts $[\id, \sigma]$ \ts of  \ts $\si$ \ts in the \defn{weak Bruhat order} \ts $\cB_n=(S_n,\vt)$,
see \cite{BW91,FW97}.  The set of minimal elements in \ts $\cP_\si$ \ts is the set of
\defn{minimal records} in~$\si$: \.
$$
\min(P_\si) \. = \. \big\{k\in [n] \, : \, \si(i)>\si(k) \ \ \text{for all} \ \ i<k\big\}.
$$

Fix two record minima \. $a,b \in \min(P_\si)$\ts.  We can then rewrite the correlation
inequality \eqref{eq:poset-corr-tight-probab} in terms of random permutations $\om \in S_n\ts$:
$$
\frac{n}{n-1} \ \le \ \frac{\Pb[\ts \om(a)=1, \ts \om(b)=2\. | \. \om \vte \si\ts]}{
\Pb[\ts \om(a)=1 \. | \.  \om \vte \si\ts] \. \cdot \.
\Pb[\ts \om(a)=2 \. | \.  \om \vte \si\ts]} \ \le \ 2\ts.
$$
As in the introduction, the lower bound is an equality for \ts $\si=(n,n-1,\ldots,1)$ \ts
and the upper bound is an equality for \ts $\si=(2,1,3,\ldots,n)$.


\medskip

\subsection{Zigzag posets} \label{ss:app-zigzag}
Let \ts $P_n:=(X,\prec)$ \ts be the poset on $n$ elements \ts
$X=\{x_1,y_1,x_2,y_2,x_3,\ldots\}$, with the only relations given by
\. $x_1\prec y_1 \succ x_2 \prec y_3 \succ x_3 \prec \ts \ldots$ \ts
Note that $P_n$ has height two and the set \ts $\Ec(P_n)$ \ts is in
bijection with \defn{alternating permutations} \ts
$\si(1) < \si(2) > \si(3) < \si(4) > \si(5) < \ldots$ \ts and the
standard Young tableaux of a \defng{zigzag shape} \cite{Sta3}.

It is well-known that \ts $e(P_n)=E_n$ \ts are the \defng{Euler numbers}, which satisfy
$$
\sum_{m=0}^{\infty} \. E_n \. \frac{t^{n}}{n!} \ = \ \tan(t) \. + \. \sec(t)\ts \quad
\text{and} \quad E_n \. \sim \, \frac{2^{n+2}\ts n!}{\pi^{n+1}} \ \ \. \text{as} \ \ n\to \infty,
$$
see e.g.\ \ts \cite[\href{http://oeis.org/A000111}{A000111}]{OEIS}, \ts
\cite[$\S$IV.6.1]{FS09} \ts and \ts \cite[$\S$1.6]{EC}.
We also have:
$$
\Pb[f(x_1)=k] \. = \. \frac{E_{n,k}}{E_k}\., \quad
\text{where} \quad
E_{n,k} \ := \ \sum_{i=0}^{\lfloor (k-1)/2\rfloor} \. (-1)^i \.  \binom{k}{2i+1} \. E_{n-2i-1}
$$
are the \defng{Entringer numbers}, see
\cite[\href{http://oeis.org/A008282}{A008282}]{OEIS}
and \cite{Sta3}.  Stanley Theorem~\ref{t:poset-Stanley} in this case gives
log-concavity of the Entringer numbers:
\begin{equation}\label{eq:Stanley-Entringer}
E_{n,k}^2 \, \ge \, E_{n,k-1} \.\cdot \. E_{n,k+1}\..
\end{equation}
This inequality can be viewed as a degree two polynomial inequality for the Euler numbers.
We refer to \cite{B+,G+} and \cite[$\S$1.38]{CP1} for more on \eqref{eq:Stanley-Entringer} and
its various generalizations.

\begin{ex}
Let \ts $n=2m+1$, and note that \ts $\min(P_n)=\{x_1,\ldots,x_{m+1}\}$.  Fix
 \ts $1\le k \le m+1$ \ts and consider a subset \ts $A:=\{x_1,\ldots,x_k\}$.
We have:
$$\Pb\big[1\in f(A)\big] \, = \, \frac{F_n(k)}{E_n}\., \quad
\Pb\big[1\notin f(A)\big] \, = \, \frac{F_n(m-k+1)}{E_n} \.,
\quad 
\Pb\big[1\notin f(A), 2\in f(A)\big] \, = \, \frac{G_n(k)}{E_n}\.,
$$
where
$$
\aligned
	F_n(k) \ & := \  \sum_{i=1}^{k} \. \binom{n-1}{2i-2} \. E_{2i-2} \. E_{n-2i+1} \qquad \text{and}  \\
    G_n(k) \ & := \   \sum_{i=1}^{k} \. \sum_{j=k+1}^{m+1} \.
    \binom{n-2}{2i-2,2j-2i-1} \.  E_{2i-2} \.   E_{2j-2i-1} \. E_{n-2j+1}\..
\endaligned
$$
Substituting these into \eqref{eq:Stanley-ext2} gives a new degree four
polynomial inequality for the Euler numbers:
\begin{equation}\label{eq:Euler-numbers-1}
F_n(k) \. \cdot \. F_n(m-k+1) \, \le \, E_n \. \cdot \. G_n(k).
\end{equation}
Note that for \ts $k, n-k=\om(1)$ \ts both sides have the same leading asymptotics, which makes
the inequality even more interesting.
\end{ex}

\begin{ex}
Let \ts $n=2m+1$ \ts and \ts $1\le k \le \ell \le m+1$.  Take
\ts $A=\{x_1,\ldots,x_k\}$ \ts as above.
We have:
$$\Pb\big[1,2\in f(A)\big] \, = \, \frac{H_n(k)}{E_n} \qquad \text{and} \qquad
\Pb\big[1,2\notin f(A)\big] \, = \, \frac{H_n(m-k+1)}{E_n}\.,
$$
where
$$
H_n(k) \ := \ 2 \. \sum_{1 \leq i <j \leq k} \.
\binom{n-2}{2i-2,2j-2i-1} \,  E_{2i-2} \.   E_{2j-2i-1} \. E_{n-2j+1}\..
$$
Substituting this and the inequalities from the previous example
into \eqref{eq:Stanley-ext3} with \. $A \gets X\sm A$ \.  gives a new degree six
polynomial inequality
for the Euler numbers:
\begin{equation}\label{eq:Euler-numbers-2}
G_n(k)^2 \ \geq \ H_n(k) \. \cdot \. H_n(m-k+1).
\end{equation}
\end{ex}

\bigskip



\section{Correlation inequalities from hyperbolic inequalities}\label{s:atlas}

In this short section we derive several correlation inequalities for hyperbolic matrices.
Although we are motivated by applications to combinatorial atlas \cite{CP2,CP1},
the presentation here is self-contained and uses nothing but linear algebra.

\smallskip

\subsection{Hyperbolic matrices}
Let \. $\bM = (\aM_{ij})$ \. be a symmetric \ts $d \times d$ \ts matrix with
entries \ts $\aM_{ij}\in \rr_+$.
The matrix  \. $\bM$ \. is \defn{hyperbolic} (satisfies \defn{hyperbolic property}), if
\begin{equation}\label{eq:Hyp}\tag{Hyp}
	\langle\xb, \bM \yb  \rangle^2 \ \geq \ \langle\xb, \bM \xb  \rangle \, \langle \yb, \bM \yb \rangle \quad \text{ for all  \ \ $\xb, \yb \in \Rb^{d}$ \ such that } \ \langle \yb, \bM \yb \rangle \geq 0.
\end{equation}
Note that \eqref{eq:Hyp} is equivalent to the matrix $\bM$ having at most one positive eigenvalue,
counting multiplicity (see e.g.\ \cite{CP2,SvH19}).

For every \. $\xb, \yb \in \Rb^d$\., we employ the following shorthand
\[  \bM_{\xb\!\yb} \  :=  \ \langle \xb, \bM \yb \rangle.  \]
Note that \. $\bM_{\xb \! \yb} = \bM_{\yb\!\xb}$ \. since \ts $\bM$ \ts is symmetric.

\smallskip

\begin{lemma}\label{lem:Pos}
	Let \. $\bMr$ \. be a nonnegative symmetric $d \times d$ matrix that satisfies \eqref{eq:Hyp},
and let \. $\xb,\yb,\zb \in \Rb^d_+$ \.  be nonnegative vectors.  Then either \.
$\bMr \. {\xb}= \bMr \. {\yb}=  \bMr \.{\zb} =\mathbf{0}$, or there exists \. $\vb \in \Rb^d_+$
such that \. $\langle \xb+\ep \vb, \bMr \. (\xb+\ep \vb) \rangle >0$ \. for all \ts $\ep >0$.
%
\end{lemma}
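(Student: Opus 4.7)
The plan is to expand $\langle \xb+\ep \vb, \bMr(\xb+\ep \vb) \rangle$ as a polynomial in $\ep$,
$$
\langle \xb, \bMr \xb\rangle \ + \ 2\ep \. \langle \xb, \bMr \vb\rangle \ + \ \ep^2 \. \langle \vb, \bMr \vb\rangle,
$$
and exploit the fact that each of these three coefficients is automatically nonnegative because $\bMr$ has nonnegative entries and $\xb,\vb \in \Rb^d_+$. Strict positivity of the whole expression for every $\ep>0$ therefore reduces to producing a nonnegative $\vb$ for which \emph{at least one} of the three coefficients is strictly positive.

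I would proceed by cases on whether $\bMr\xb = \mathbf{0}$. If $\bMr\xb \neq \mathbf{0}$, take $\vb := \bMr\xb$, which lies in $\Rb^d_+$; the middle coefficient becomes $2\.\|\bMr\xb\|^2 > 0$, so we are done. If instead $\bMr\xb = \mathbf{0}$, then by symmetry of $\bMr$ we have $\langle \xb, \bMr\vb\rangle = \langle \bMr\xb, \vb\rangle = 0$ for every $\vb$, so both the constant and linear coefficients vanish identically. The task reduces to finding $\vb \in \Rb^d_+$ with $\langle \vb, \bMr\vb\rangle > 0$. Here I use the assumed negation of the first alternative: at least one of $\bMr\yb, \bMr\zb$ is nonzero, which forces $\bMr$ itself to be a nonzero matrix, so some entry $\aM_{ij}$ is strictly positive. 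Then $\vb := \eb_i + \eb_j$ (or $\vb := \eb_i$ if $i=j$) gives $\langle \vb, \bMr\vb\rangle \geq \aM_{ii} + 2\aM_{ij} + \aM_{jj} > 0$.

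There is essentially no obstacle: the whole argument is elementary bookkeeping and, notably, does not invoke the hyperbolic property \eqref{eq:Hyp} at all. The role of $\yb$ and $\zb$ in the statement is purely to witness, via the negation of the first alternative, that $\bMr \neq 0$ in the degenerate case $\bMr\xb = \mathbf{0}$. If one prefers a uniform choice, taking $\vb := \mathbf{1}$ (the all-ones vector) handles both cases simultaneously: the three coefficients become $\langle \xb, \bMr\xb\rangle$, $2\sum_j (\bMr\xb)_j$, and $\sum_{i,j} \aM_{ij}$, and at least one of these is positive whenever $\bMr$ is nonzero.
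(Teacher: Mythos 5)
Your proof is correct, and it genuinely diverges from the paper's route. The paper's proof cases on the spectrum of $\bMr$: if $\bMr$ has a positive eigenvalue, it invokes (implicitly) a Perron--Frobenius-type fact that the top eigenvalue of a nonnegative symmetric matrix admits a nonnegative eigenvector $\vb$, and the quadratic term $\ep^2\lambda\langle\vb,\vb\rangle$ alone forces positivity; if instead all eigenvalues are nonpositive, then $\bMr$ is negative semidefinite, so $\langle\xb,\bMr\xb\rangle\le 0$, which combined with entrywise nonnegativity forces $\bMr\xb=\mathbf{0}$ (and likewise for $\yb,\zb$). Your argument sidesteps spectral theory entirely: casing instead on whether $\bMr\xb$ vanishes, you use $\vb=\bMr\xb$ to make the linear term positive, or, when $\bMr\xb=\mathbf{0}$ but $\bMr\ne 0$, you exhibit a positive quadratic term directly from a positive entry. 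Both proofs are valid; yours is more elementary and self-contained (no Perron--Frobenius input), and your closing remark that \eqref{eq:Hyp} is never invoked is accurate --- the paper's proof likewise does not use it, only the symmetry and entrywise nonnegativity. The uniform choice $\vb=\mathbf{1}$ is a nice tidying of the same idea.
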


\smallskip

\begin{proof}
	We split the proof into two cases.
	First, suppose that \ts $\bM$ \ts has a positive eigenvalue \ts $\lambda>0$ \ts
    with a corresponding  eigenvector \ts $\vb \in \Rb^d_+$\ts.
	Then we get
	\begin{align*}
		\langle \xb+\ep \vb, \bM \. (\xb+\ep \vb) \rangle  \ & = \ \langle \xb, \bM \xb \rangle \ + \ 2 \. \ep \.  \langle \xb, \bM \vb \rangle \. + \. \ep^2 \.  \langle \vb, \bM \vb \rangle \\ &\geq   \  \ep^2 \.  \langle \vb, \bM \vb \rangle
		\ = \ \ep^2 \. \lambda \. \langle \vb, \vb \rangle \ > \ 0,
	\end{align*}
as desired.
	
	Now suppose that all eigenvalues of \. $\bM$ \. are nonpositive.
	Then this implies,  \. $\bM_{\xb \! \xb}=\langle \xb, \bM \xb \rangle < 0$ \.  whenever \. $ \xb \notin \ker \bM$\..
	On the other hand, we have \. $\bM_{\xb \! \xb}=\langle \xb, \bM \xb \rangle \geq 0$ \. by the non-negativity of $\bM$ and $\xb$.
	Combining these two observations, we get  that  \. $\bM\xb=\mathbf{0}$\..
	Analogously, we have \. $\bM \yb= \bM \zb=\mathbf{0}$, which completes the proof.
\end{proof}

\smallskip

The following lemma is a useful consequence of \eqref{eq:Hyp}, and is inspired by the inequality in
\cite[Lemma~7.4.1]{Sch} for mixed volumes.\footnote{Our original proof was more complicated.
		The simplified proof below was suggested to us by Ramon van Handel (Nov.~2022, personal communication). }

\smallskip

\begin{lemma}\label{lem:atlas-corr-quart}	
	Let \. $\bMr$ \. be a nonnegative symmetric $d \times d$ matrix that satisfies \eqref{eq:Hyp},
and let \. $\xb,\yb,\zb \in \Rb^d_+$ \.  be nonnegative vectors.  Then:
	\begin{align}\label{eq:atlas-corr-quart}
		\big(\bMr_{\yb\!\zb} \, \bMr_{\xb \! \xb} - \bMr_{\xb \! \yb}  \, \bMr_{\xb \! \zb}  \big)^2 \ \leq \ \big(\bMr_{\xb \! \yb}^2 - \bMr_{\xb \! \xb} \, \bMr_{ \yb \! \yb}\big) \, \big(\bMr_{\xb \! \zb}^2 - \bMr_{\xb \! \xb} \, \bMr_{\zb \! \zb}\big).
	\end{align}
\end{lemma}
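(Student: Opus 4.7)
The plan is to recognize the inequality as a Cauchy--Schwarz-type bound for a cleverly chosen symmetric bilinear form, and then verify the form is positive semidefinite using \eqref{eq:Hyp}.

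Define the symmetric bilinear form on $\Rb^d$ by
\[
B(\ub,\vb) \, := \, \bMr_{\xb\!\ub}\.\bMr_{\xb\!\vb} \. - \. \bMr_{\xb\!\xb}\.\bMr_{\ub\!\vb}.
\]
Then the right-hand side of \eqref{eq:atlas-corr-quart} equals $B(\yb,\yb)\.B(\zb,\zb)$, while the left-hand side equals $B(\yb,\zb)^2$. Thus the claim is exactly the Cauchy--Schwarz inequality $B(\yb,\zb)^2 \le B(\yb,\yb)\.B(\zb,\zb)$, which will follow once we know $B(\wb,\wb)\ge 0$ for every $\wb$ in the span of $\yb$ and $\zb$.

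The key step is showing $B(\wb,\wb)\ge 0$ for every $\wb\in\Rb^d$. I would split into two cases. If $\bMr_{\wb\!\wb}\ge 0$, then applying \eqref{eq:Hyp} with the second vector being $\wb$ yields $\bMr_{\xb\!\wb}^2 \ge \bMr_{\xb\!\xb}\.\bMr_{\wb\!\wb}$, i.e.\ $B(\wb,\wb)\ge 0$. If instead $\bMr_{\wb\!\wb}<0$, then since $\bMr_{\xb\!\xb}\ge 0$ (as $\bMr$ has nonnegative entries and $\xb\in\Rb^d_+$), we get $B(\wb,\wb)=\bMr_{\xb\!\wb}^2-\bMr_{\xb\!\xb}\.\bMr_{\wb\!\wb}>0$. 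Note that no positivity of $\wb$ itself is required, which is crucial.

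To conclude, apply this to $\wb=\yb+t\zb$ for $t\in\Rb$ and expand:
\[
Q(t) \, := \, B(\yb+t\zb,\.\yb+t\zb) \, = \, B(\yb,\yb) \. + \. 2t\.B(\yb,\zb) \. + \. t^2\.B(\zb,\zb) \, \ge \, 0
\]
for all $t\in\Rb$. When $B(\zb,\zb)>0$, the quadratic $Q(t)$ is a nonnegative upward parabola, so its discriminant is nonpositive, giving precisely $B(\yb,\zb)^2\le B(\yb,\yb)\.B(\zb,\zb)$. When $B(\zb,\zb)=0$, nonnegativity of the linear function $Q(t)$ forces $B(\yb,\zb)=0$, and the inequality holds trivially. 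The degenerate subcase $\bMr_{\xb\!\xb}=0$ can be checked separately: both sides of \eqref{eq:atlas-corr-quart} collapse to $\bMr_{\xb\!\yb}^2\.\bMr_{\xb\!\zb}^2$, yielding equality.

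The only real obstacle is spotting the Cauchy--Schwarz reformulation; once the form $B$ is identified, the PSD verification is a short two-case argument using only nonnegativity of $\bMr_{\xb\!\xb}$ and one application of \eqref{eq:Hyp}, and the passage from PSD to Cauchy--Schwarz is the standard discriminant trick.
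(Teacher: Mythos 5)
Your proof is correct and follows essentially the same route as the paper's: you identify the symmetric bilinear form $B(\ub,\vb) = \bMr_{\xb\!\ub}\.\bMr_{\xb\!\vb} - \bMr_{\xb\!\xb}\.\bMr_{\ub\!\vb}$ (the negative of the paper's $\aQ$), show it is positive semidefinite on all of $\Rb^d$ using \eqref{eq:Hyp} together with $\bMr_{\xb\!\xb}\ge 0$, and conclude by Cauchy--Schwarz. Your two-case check of semidefiniteness is in fact slightly more careful than the paper's one-line assertion (which has the sign backwards in stating $\aQ$ is ``nonnegative'' rather than nonpositive), and you rightly observe that $\wb=\yb+t\zb$ need not lie in $\Rb^d_+$, which is exactly why the second case is needed.
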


\smallskip

\begin{proof}
	Fix \. $\xb \in \Rb_+^d$ \. and let \. $\aQ: \Rb^d \times \Rb^d \to \Rb$ \.
	be the bilinear form given by
	\[ \aQ(\yb,\zb)  \ := \   \bM_{\yb\!\zb} \, \bM_{\xb \! \xb} - \bM_{\xb \! \yb}  \, \bM_{\xb \! \zb}\..\]
	Then
	 \eqref{eq:Hyp} implies that
	\ts  $\aQ(\yb,\yb)$ \ts is a nonnegative quadratic form.
	It then follows from the Cauchy--Schwarz inequality that
	\[ \aQ(\yb,\zb)^2 \ \leq \ \aQ(\yb,\yb) \. \aQ(\zb,\zb)\ts.  \]
	This is equivalent to \eqref{eq:atlas-corr-quart}, which completes the proof.
\end{proof}

\smallskip

\subsection{The implications}
The following lemma is a simple consequence of
Lemma~\ref{lem:atlas-corr-quart}.

\smallskip

\begin{lemma}\label{lem:atlas-corr-tri}
	Let \. $\bMr$ \. be a nonnegative symmetric $d \times d$ matrix that satisfies \eqref{eq:Hyp}, and let \. $\xb,\yb,\zb \in \Rb^d_+$  \.be nonnegative vectors. Then:
	\begin{align}\label{eq:atlas-corr-tri}
		\bMr_{\yb \! \yb} \. \bMr_{\xb \! \zb}^2  \ + \
		\bMr_{\zb \! \zb} \. \bMr_{\xb \! \yb}^2  \ \leq \ 2 \ts \bMr_{\xb \! \yb} \. \bMr_{\xb \! \zb} \. \bMr_{\yb \! \zb}\..
	\end{align}
\end{lemma}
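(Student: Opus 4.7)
The plan is to derive Lemma~\ref{lem:atlas-corr-tri} as a direct algebraic manipulation of Lemma~\ref{lem:atlas-corr-quart}, together with the hyperbolic property \eqref{eq:Hyp} applied to the pair $(\yb,\zb)$, and a perturbation step supplied by Lemma~\ref{lem:Pos} to handle the degenerate case $\bMr_{\xb\!\xb}=0$.

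First, I would expand both sides of \eqref{eq:atlas-corr-quart}. The LHS equals $\bMr_{\yb\!\zb}^2 \bMr_{\xb\!\xb}^2 - 2 \bMr_{\xb\!\xb}\bMr_{\xb\!\yb}\bMr_{\xb\!\zb}\bMr_{\yb\!\zb} + \bMr_{\xb\!\yb}^2\bMr_{\xb\!\zb}^2$, while the RHS equals $\bMr_{\xb\!\yb}^2\bMr_{\xb\!\zb}^2 - \bMr_{\xb\!\xb}\bMr_{\xb\!\yb}^2\bMr_{\zb\!\zb} - \bMr_{\xb\!\xb}\bMr_{\yb\!\yb}\bMr_{\xb\!\zb}^2 + \bMr_{\xb\!\xb}^2\bMr_{\yb\!\yb}\bMr_{\zb\!\zb}$. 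The common term $\bMr_{\xb\!\yb}^2\bMr_{\xb\!\zb}^2$ cancels, and after rearranging, what remains is
\begin{equation*}
\bMr_{\xb\!\xb}\bigl(\bMr_{\yb\!\yb}\bMr_{\xb\!\zb}^2 + \bMr_{\zb\!\zb}\bMr_{\xb\!\yb}^2\bigr) \ \leq \ 2\bMr_{\xb\!\xb}\bMr_{\xb\!\yb}\bMr_{\xb\!\zb}\bMr_{\yb\!\zb} \. + \. \bMr_{\xb\!\xb}^2\bigl(\bMr_{\yb\!\yb}\bMr_{\zb\!\zb} - \bMr_{\yb\!\zb}^2\bigr).
\end{equation*}
Now I invoke \eqref{eq:Hyp} with the pair $(\yb,\zb)$: since $\bMr$ is nonnegative and $\zb \in \Rb_+^d$, we have $\bMr_{\zb\!\zb}\geq 0$, so \eqref{eq:Hyp} gives $\bMr_{\yb\!\zb}^2 \geq \bMr_{\yb\!\yb}\bMr_{\zb\!\zb}$. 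Thus the second summand on the right is nonpositive, and dropping it yields
\begin{equation*}
\bMr_{\xb\!\xb}\bigl(\bMr_{\yb\!\yb}\bMr_{\xb\!\zb}^2 + \bMr_{\zb\!\zb}\bMr_{\xb\!\yb}^2\bigr) \ \leq \ 2\bMr_{\xb\!\xb}\bMr_{\xb\!\yb}\bMr_{\xb\!\zb}\bMr_{\yb\!\zb}.
\end{equation*}
When $\bMr_{\xb\!\xb}>0$, dividing by $\bMr_{\xb\!\xb}$ gives \eqref{eq:atlas-corr-tri} immediately.

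The main (and only) subtlety is when $\bMr_{\xb\!\xb}=0$, where dividing is illegal. Here I would split on the dichotomy from Lemma~\ref{lem:Pos}. If $\bMr\xb = \bMr\yb = \bMr\zb = \mathbf{0}$, every term $\bMr_{\xb\!\yb}$, $\bMr_{\xb\!\zb}$, $\bMr_{\yb\!\yb}$, $\bMr_{\zb\!\zb}$ appearing in \eqref{eq:atlas-corr-tri} vanishes, so both sides are zero. Otherwise Lemma~\ref{lem:Pos} produces $\vb \in \Rb_+^d$ such that $\bMr_{\xb_\ep\xb_\ep}>0$ for all $\ep>0$, where $\xb_\ep := \xb + \ep \vb$; apply the already-proven case to the triple $(\xb_\ep,\yb,\zb) \in \Rb_+^d$ and let $\ep \to 0$ by continuity to recover \eqref{eq:atlas-corr-tri}. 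This perturbation step is the one place the proof uses Lemma~\ref{lem:Pos}, but it is routine; the genuine content of the lemma is just the cancellation/rearrangement of \eqref{eq:atlas-corr-quart} combined with the hyperbolic bound on $\bMr_{\yb\!\zb}^2$.
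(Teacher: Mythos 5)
Your proof is correct and follows essentially the same route as the paper's: expand Lemma~\ref{lem:atlas-corr-quart}, cancel $\bMr_{\xb\!\yb}^2\bMr_{\xb\!\zb}^2$, use \eqref{eq:Hyp} on $(\yb,\zb)$ to discard the $\bMr_{\xb\!\xb}^2(\bMr_{\yb\!\yb}\bMr_{\zb\!\zb}-\bMr_{\yb\!\zb}^2)$ term, divide by $\bMr_{\xb\!\xb}$, and handle $\bMr_{\xb\!\xb}=0$ via the Lemma~\ref{lem:Pos} dichotomy with a perturbation $\xb\gets\xb+\ep\vb$. The only cosmetic difference is that the paper dispatches the degenerate case up front (so it can assume $\bMr_{\xb\!\xb}>0$ before expanding), whereas you run the algebra first and defer the perturbation to the end; both orderings are fine.
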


\smallskip

\begin{proof}
	The lemma clearly follows if \. $\bM \. {\xb}= \bM \. {\yb}=  \bM \.{\zb} =\mathbf{0}$,
	so  assume that this is not the case.
	Then, by Lemma~\ref{lem:Pos},
	we can  substitute \. $\xb$ \. with \. $\xb+\ep \vb$ \.  and then takes the limit $\ep \to 0$ if necessary,
	 so we can additionally assume that \. $\bM_{\xb \! \xb} > 0$\..
	Expanding the squares in \eqref{eq:atlas-corr-quart}, we get:
\begin{equation*}
\aligned
	& \bM_{\yb \! \zb}^2 \. \bM_{\xb \! \xb}^2  \. - \. 2 \ts \bM_{\xb \! \xb} \. \bM_{\xb \! \yb} \. \bM_{\xb \! \zb} \. \bM_{\yb \! \zb}  \. + \. \bM_{\xb \! \yb}^2 \. \bM_{\xb \! \zb}^2 \\
	&\qquad \leq  \ \bM_{\xb \! \yb}^2 \. \bM_{\xb \! \zb}^2 \. + \.
		\bM_{\xb \! \xb}^2 \. \bM_{\yb \! \yb} \. \bM_{\zb \! \zb} \. - \. \bM_{\xb \! \xb} \. \bM_{\yb \! \yb} \. \bM_{\xb \! \zb}^2  \. - \. \bM_{\xb \! \xb} \. \bM_{\zb \! \zb} \. \bM_{\xb \! \yb}^2\..
\endaligned
\end{equation*}
Cancelling the term \. $\bM_{\xb \! \yb}^2 \. \bM_{\xb \! \zb}^2 $ \. from both sides,
	and then dividing both sides by \. $\bM_{\xb \! \xb}$\., we get:
	\begin{align}\label{eq:3det}
		\bM_{\yb \! \zb}^2 \. \bM_{\xb \! \xb}  \. - \. 2 \ts \bM_{\xb \! \yb} \. \bM_{\xb \! \zb} \. \bM_{\yb \! \zb}
		\ \leq \
		\bM_{\xb \! \xb} \. \bM_{\yb \! \yb} \. \bM_{\zb \! \zb} \. - \.  \bM_{\yb \! \yb} \. \bM_{\xb \! \zb}^2  \. - \.  \bM_{\zb \! \zb} \. \bM_{\xb \! \yb}^2\ts.
	\end{align}
This is equivalent to
	\begin{align*}
		\bM_{\yb \! \yb} \. \bM_{\xb \! \zb}^2  \. + \.  \bM_{\zb \! \zb} \. \bM_{\xb \! \yb}^2 \. + \.  \bM_{\xb \! \xb}\big( \bM_{\yb \! \zb}^2 \. - \. \bM_{\yb \! \yb} \, \bM_{\zb \! \zb}  \big)  \ \leq \ 2 \ts \bM_{\xb \! \yb} \. \bM_{\xb \! \zb} \. \bM_{\yb \! \zb}\..
	\end{align*}
By  \eqref{eq:Hyp}, we have \. $\bM_{\yb \! \zb}^2 \ts \geq \ts \bM_{\yb \! \yb} \,  \bM_{\zb \! \zb}\ts$.  This and the above inequality
imply the result.  \end{proof}

\smallskip

\begin{rem}\label{r:Shephard}
Note that \eqref{eq:atlas-corr-tri} implies \eqref{eq:Hyp} by substitution \ts $\zb\gets \xb$.
The inequality \eqref{eq:3det} can be rewritten symmetrically as:
\begin{equation}\label{eq:Shephard}
 \det \begin{bmatrix}
	\bM_{\xb \! \xb} & \bM_{\xb \! \yb} & \bM_{\xb \! \zb} \\
		\bM_{\xb \! \yb} & \bM_{\yb \! \yb} & \bM_{\yb \! \zb} \\
		\bM_{\xb \! \zb} & \bM_{\yb \! \zb} & \bM_{\zb \! \zb} \\
\end{bmatrix}  \ \geq \ 0\ts.
\end{equation}
It can be viewed as the counterpart of \defng{Shephard's inequality}
for mixed volumes, see e.g.~\cite{RvH}.
\end{rem}

%

\smallskip

\begin{lemma}\label{lem:atlas-corr-half}
		Let \. $\bMr$ \. be a nonnegative symmetric $d \times d$ matrix that satisfies \eqref{eq:Hyp}, and let \. $\xb,\yb,\zb \in \Rb^d_+$  \. be nonnegative vectors. Then:
		\begin{align}\label{eq:atlas-corr-half}
				\bMr_{\zb \! \zb} \. \bMr_{\xb \! \yb} \ \leq \ 2  \.  \bMr_{\xb \! \zb} \. \bMr_{\yb \! \zb}\..
		\end{align}
\end{lemma}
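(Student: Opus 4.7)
The plan is to derive this inequality directly from Lemma~\ref{lem:atlas-corr-tri}, which is already established in the preceding lemma. The key observation is that the target inequality \eqref{eq:atlas-corr-half} can be obtained simply by discarding one nonnegative term and then dividing.

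Concretely, I would start by invoking Lemma~\ref{lem:atlas-corr-tri} under the same hypotheses, which yields
\begin{equation*}
\bM_{\yb\!\yb}\,\bM_{\xb\!\zb}^{2} \. + \. \bM_{\zb\!\zb}\,\bM_{\xb\!\yb}^{2} \ \leq \ 2\,\bM_{\xb\!\yb}\,\bM_{\xb\!\zb}\,\bM_{\yb\!\zb}.
\end{equation*}
Since $\bM$ has nonnegative entries and $\yb,\zb \in \Rb^d_+$, both $\bM_{\yb\!\yb}$ and $\bM_{\xb\!\zb}^{2}$ are nonnegative, so discarding the first summand on the left gives
\begin{equation*}
\bM_{\zb\!\zb}\,\bM_{\xb\!\yb}^{2} \ \leq \ 2\,\bM_{\xb\!\yb}\,\bM_{\xb\!\zb}\,\bM_{\yb\!\zb}.
\end{equation*}

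Next, I would split into two cases according to whether $\bM_{\xb\!\yb}=0$. If $\bM_{\xb\!\yb}>0$, dividing both sides by $\bM_{\xb\!\yb}$ yields \eqref{eq:atlas-corr-half} directly. If $\bM_{\xb\!\yb}=0$, then the desired inequality reads $0\leq 2\,\bM_{\xb\!\zb}\,\bM_{\yb\!\zb}$, which holds trivially since $\bM$ is nonnegative and $\xb,\yb,\zb \in \Rb^d_+$.

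There is no real obstacle here; the lemma is essentially a one-line corollary once Lemma~\ref{lem:atlas-corr-tri} is in hand. The only subtlety worth flagging in the write-up is the degenerate case $\bM_{\xb\!\yb}=0$, which must be handled separately since we cannot divide through by it. In this sense, the weight of the argument lies entirely in the hyperbolicity-based passage from \eqref{eq:Hyp} through Lemmas~\ref{lem:atlas-corr-quart} and~\ref{lem:atlas-corr-tri}, and the present lemma is best viewed as an asymmetric repackaging of \eqref{eq:atlas-corr-tri}.
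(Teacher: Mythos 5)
Your proposal is correct and matches the paper's own proof essentially verbatim: both drop the nonnegative term $\bM_{\yb\!\yb}\ts\bM_{\xb\!\zb}^2$ from Lemma~\ref{lem:atlas-corr-tri}, handle the degenerate case $\bM_{\xb\!\yb}=0$ separately, and divide by $\bM_{\xb\!\yb}$ otherwise. The only cosmetic difference is that the paper dispatches the degenerate case first.
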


\smallskip
\begin{proof}
	Note that the lemma clearly holds if  \. $\bM_{\xb \! \yb} = 0$\.,
	so we can assume that \. $\bM_{\xb \! \yb} >  0$\..
	The lemma now follows by removing the term \. $\bM_{\yb \! \yb} \. \bM_{\xb \! \zb}^2$ from the LHS of \eqref{eq:atlas-corr-tri} and dividing both sides by  \. $\bM_{\xb \! \yb}$\..
\end{proof}

\smallskip

The upper bound in \eqref{eq:atlas-corr-half} can be improved in some cases.

\smallskip

\begin{lemma}\label{lem:atlas-corr-conditional}
Let \. $\bMr$ \. be a nonnegative symmetric $d \times d$ matrix that satisfies \eqref{eq:Hyp},
and let \. $\xb,\yb,\zb \in \Rb^d_+$ \. be nonnegative vectors.
Then at least two out of these three inequalities hold:
$$
\bMr_{\xb \! \xb} \. \bMr_{\yb \! \zb} \ \leq \ \bMr_{\xb \! \yb} \. \bMr_{\xb \! \zb} \ , \quad
\bMr_{\yb \! \yb} \. \bMr_{\xb \! \zb} \ \leq \ \bMr_{\xb \! \yb} \. \bMr_{\yb \! \zb} \quad \text{or}
\quad \bMr_{\zb \! \zb} \. \bMr_{\xb \! \yb} \ \leq \ \bMr_{\xb \! \zb} \. \bMr_{\yb \! \zb} \..
$$
\end{lemma}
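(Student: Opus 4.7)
The plan is a short proof by contradiction, exploiting the full $S_3$-symmetry of the statement under permutations of $(\xb,\yb,\zb)$. For each of the three vectors, I would introduce a ``defect'' quantity whose nonnegativity encodes the corresponding inequality --- namely
\begin{align*}
a_{\xb} &\. := \. \bMr_{\xb \! \yb}\bMr_{\xb \! \zb} \. - \. \bMr_{\xb \! \xb}\bMr_{\yb \! \zb}, \\
a_{\yb} &\. := \. \bMr_{\xb \! \yb}\bMr_{\yb \! \zb} \. - \. \bMr_{\yb \! \yb}\bMr_{\xb \! \zb}, \\
a_{\zb} &\. := \. \bMr_{\xb \! \zb}\bMr_{\yb \! \zb} \. - \. \bMr_{\zb \! \zb}\bMr_{\xb \! \yb},
\end{align*}
so that the three alleged inequalities read $a_{\xb} \ge 0$, $a_{\yb} \ge 0$, $a_{\zb} \ge 0$ respectively. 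The claim that at least two of them hold is equivalent to the claim that no two of $a_{\xb}, a_{\yb}, a_{\zb}$ are simultaneously strictly negative, and by the symmetry under permutations of $(\xb,\yb,\zb)$ it suffices to rule out the case $a_{\xb} < 0$ and $a_{\yb} < 0$.

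The first step, after assuming $a_{\xb}<0$ and $a_{\yb}<0$ for contradiction, is to extract positivity: the relation $a_{\xb}<0$ rewrites as $\bMr_{\xb \! \xb}\bMr_{\yb \! \zb} > \bMr_{\xb \! \yb}\bMr_{\xb \! \zb} \ge 0$, and using nonnegativity of all entries this automatically forces $\bMr_{\xb \! \xb}>0$ and $\bMr_{\yb \! \zb}>0$; likewise $a_{\yb}<0$ forces $\bMr_{\yb \! \yb}>0$ and $\bMr_{\xb \! \zb}>0$. This automatic positivity is what makes the approach clean, and is precisely what lets us sidestep any perturbation trick in the style of Lemma~\ref{lem:Pos}.

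The finishing step is to multiply the two strict inequalities and divide through by the now-positive quantity $\bMr_{\xb \! \zb}\bMr_{\yb \! \zb}$, obtaining
\[
\bMr_{\xb \! \xb} \. \bMr_{\yb \! \yb} \ > \ \bMr_{\xb \! \yb}^2.
\]
Since $\bMr_{\yb \! \yb} \ge 0$, the hyperbolic inequality \eqref{eq:Hyp} applied to the pair $(\xb,\yb)$ gives $\bMr_{\xb \! \yb}^2 \ge \bMr_{\xb \! \xb}\bMr_{\yb \! \yb}$, which contradicts the displayed strict inequality. I do not anticipate any substantive obstacle; the only real subtlety is ensuring that degenerate cases where some of the inner products vanish are not left uncovered, and this is handled once and for all by the positivity-extraction step above.
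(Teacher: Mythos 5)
Your proof is correct and is essentially the paper's own argument: assume two of the inequalities fail, multiply the two resulting strict inequalities, cancel to obtain $\bMr_{\xb\!\xb}\bMr_{\yb\!\yb} > \bMr_{\xb\!\yb}^2$, and contradict \eqref{eq:Hyp}. The only difference is that you make explicit the positivity of $\bMr_{\xb\!\zb}\bMr_{\yb\!\zb}$ needed for the cancellation, a minor point the paper leaves implicit.
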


\smallskip

\begin{proof}
	Suppose to the contrary that at least two of these inequalities are false.
	Without loss of generality we assume the two inequalities are
	\[ 	\bM_{\xb \! \xb} \. \bM_{\yb \! \zb} \  > \ \bM_{\xb \! \yb} \. \bM_{\xb \! \zb} \qquad \text{ and } \qquad  \bM_{\yb \! \yb} \. \bM_{\xb \! \zb}  \ > \ \bM_{\xb \! \yb} \. \bM_{\yb \! \zb}\.. \]
	Taking the product of these two inequalities gives \. $\bM_{\xb \! \xb} \.  \bM_{\yb \! \yb} \. > \. \bM_{\xb\!\yb}^2.$
This contradicts \eqref{eq:Hyp} and completes the proof.
\end{proof}

\smallskip

\begin{rem}
In conditions of Lemma~\ref{lem:atlas-corr-conditional}, there are cases such that
exactly two out of these three inequalities hold.  For example, the lower
bound in \eqref{eq:poset-corr-tight} is an example in which  \.
$\bM_{\zb \! \zb} \. \bM_{\xb \! \yb} \.  > \. \bM_{\xb \! \zb} \. \bM_{\yb \! \zb}$\.,
with  the choice of  \ts $\xb,\yb,\zb$ \ts as in the proof of
Theorem~\ref{t:poset-corr-deletion-strong}.
%
\end{rem}

\bigskip

\section{Proof of correlation inequalities}\label{s:proof}

In this section we prove the inequalities in the introduction.
The proofs follow the approach in~\cite{CP1}, and are based on the observation
that certain matrices associated with posets satisfy \eqref{eq:Hyp}.

\subsection{The setup}\label{ss:proof-setup}
Fix a poset \ts $P=(X,\prec)$ \ts with \ts $|X|=n$ \ts elements,
and fix an element \ts $\pa \in X$.
For every \ts $k \geq 0$ \ts and \ts $S \subseteq \Ec(P)$,  we write
$$
\aN_k (S)  \, := \,    \big|\{ f \in S \. : \. f(\pa)=k \} \big|\ts .
$$
To simplify the notation, we  write
\[
    \aN_k\big(f(x)=1\big) \quad \text{ to mean } \quad  \aN_k\big(\{ f \in \Ec(P) \. : \. f(x)=1 \} \big).
\]
When the underlying poset $P$ is potentially ambiguous, we will write \. $\aN^{\<P\>}_{k}$ \.
instead of \. $\aN_{k}$\ts.

\smallskip

Let \ts $Z_{\up}$ \ts and \ts $Z_{\down}$ \ts be two distinct copies of $X- \{\pa\}$,
let \ts $Z:=Z_{\up} \, \cup \,  Z_{\down}$\ts, let \ts $d=|Z|=2(n-1)$, and  let \ts $k\in \{2,\ldots,n-1\}$.
We denote by \ts $\bM:= \bM(P,\pa,k)$ \ts
 the symmetric $d \times d$  matrix given by
 \begin{itemize}
 	\item[$\circ$] \, $\aM_{x \ts y} \ := \ \aN_{k+1}\big( f(x)= 1, \ts f(y) =2\big)$  \ if \  $x,y\in\min(Z_{\down})$\ts, \ts $x\ne y$,
 	\vspace*{3 pt}
 	\item[$\circ$] \, $\aM_{x \ts y} \ := \ \aN_{k-1}\big( f(x)= n, f(y) =n-1\big)$  \ if \  $x,y\in\max(Z_{\up})$\ts, \ts $x\ne y$,
 	  	\vspace*{3 pt}
 	\item[$\circ$] \, $\aM_{x \ts y} \ := \ \aN_{k}\big( f(x)= 1, f(y) =n\big)$  \ if \  $x\in\min(Z_{\down})$\ts, $y\in\max(Z_{\up})$\ts,
 	  	\vspace*{3 pt}
 	\item[$\circ$] \, $\aM_{x \ts x} \ := \ \aN_{k+1}\big( f(x)= 1\big) \. - \. \aN_{k+1}\big( f(x)= 2\big)$  \ if \  $x\in\min(Z_{\down})$\ts,
 	   	  	\vspace*{3 pt}
 	\item[$\circ$] \, $\aM_{x \ts x} \ := \ \aN_{k-1}\big( f(x)= n\big) \. - \. \aN_{k-1}\big( f(x)= n-1\big)$  \ if \  $x\in\max(Z_{\up})$\ts,
 	   	 \vspace*{3 pt}
 	\item[$\circ$] \,  $\aM_{x \ts y} \ := \ 0$ \ otherwise.
 	   	 \vspace*{3 pt}
 \end{itemize}

 \smallskip

\begin{prop}[{\cite[Prop~14.9]{CP1}}]\label{prop:M}
The matrix \. $\bMr$ \ts defined above satisfies \eqref{eq:Hyp}.
\end{prop}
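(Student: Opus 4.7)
The plan is to establish hyperbolicity of $\bM = \bM(P,\pa,k)$ by induction on $n=|X|$, following the combinatorial atlas framework developed in \cite{CP1}. The core combinatorial identity underlying the induction is that for a minimal element $x \in \min(P)$, setting $f(x)=1$ is equivalent to deleting $x$: we have $\aN_{k+1}^{\<P\>}\big(f(x) = 1, \,\cdots\big) \, = \, \aN_{k}^{\<P-x\>}(\cdots)$, and dually for maximal $y \in \max(P)$ and $f(y)=n$. Thus each entry $\aM_{xy}$ can be expressed in terms of an entry of a matrix $\bM(P', \pa, k')$ for a smaller poset $P'$ obtained by deleting one or two elements of $X$, opening the door to an inductive argument.

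For the inductive step, I would proceed as follows. First, recall that hyperbolicity is equivalent to the existence of a nonnegative vector $\vb$ with $\langle \vb, \bM \vb \rangle > 0$ such that the quadratic form $\xb \mapsto \langle \xb, \bM \xb \rangle$ is negative semi-definite on the $\bM$-orthogonal complement of $\vb$ (cf.\ Lemma~\ref{lem:Pos}). A natural candidate for $\vb$ is a Perron-type nonnegative vector adapted to the block structure $Z_\down \sqcup Z_\up$; one then verifies that $\bM\vb$ admits a clean combinatorial interpretation in terms of the smaller matrices. Second, for any test vector $\xb$ in this orthogonal complement, I would attempt to write $\xb = \sum_i \bT_i^{\ast}\xb_i$, where the maps $\bT_i$ intertwine $\bM$ with smaller matrices $\bM_i' = \bM(P_i', \pa, k_i')$, and derive an identity of the shape $\langle \xb, \bM \xb \rangle \,=\, \sum_i c_i \langle \xb_i, \bM_i' \xb_i \rangle \,+\, (\text{controllable remainder})$. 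The non-positivity of the right-hand side would then follow from the inductive hypothesis applied to each $\bM_i'$, modulo handling the remainder terms (which typically vanish on the chosen hyperplane).

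The principal obstacle, and the reason this proof occupies a substantial part of \cite{CP1}, is that the most direct candidate matrices (e.g.\ indexed by $X-\{\pa\}$ alone, without the doubled structure $Z_\down \sqcup Z_\up$) do \emph{not} satisfy \eqref{eq:Hyp}. The doubling device together with the specific cross-block entries $\aM_{xy} = \aN_k(f(x)=1,\,f(y)=n)$ are precisely the ingredients needed to make the recursion close: the ``bottom'' and ``top'' of a linear extension interact through the value at $\pa$, and the matrix must encode this interaction symmetrically for hyperbolicity to propagate up the induction. Verifying that the augmented matrix satisfies the recursion cleanly, handling boundary cases where $x$ or $y$ fails to lie in $\min(P)$ or $\max(P)$ (so that the naive identity above must be corrected), and running a suitable perturbation argument to reduce to an irreducible Perron--Frobenius situation, together form the technical bulk of the proof.
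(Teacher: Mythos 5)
The paper does not prove this proposition; it explicitly invokes \cite[Prop.~14.9]{CP1} as a black box (see the discussion in $\S$\ref{ss:intro-structure}), so there is no in-paper argument to compare your sketch against. Taken on its own terms, your sketch is a reasonable high-level account of the combinatorial atlas strategy from \cite{CP1}: the deletion identity $\aN_{k+1}^{\<P\>}(f(x)=1,\ldots) = \aN_{k}^{\<P-x\>}(\ldots)$ for $x\in\min(P)$ is indeed the engine of the recursion, the ``at most one positive eigenvalue'' reformulation of \eqref{eq:Hyp} is correct and is what Lemma~\ref{lem:Pos} exploits, and the observation that the doubled index set $Z_\down\sqcup Z_\up$ together with the cross-block entries $\aN_{k}(f(x)=1,\,f(y)=n)$ is exactly what makes the recursion close is a genuine structural insight about why the matrix has the shape it does.

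That said, the sketch is a placeholder for, rather than a summary of, the actual argument. The sentence ``derive an identity of the shape $\langle\xb,\bM\xb\rangle=\sum_i c_i\langle\xb_i,\bM_i'\xb_i\rangle+(\text{controllable remainder})$'' is precisely where the difficulty lives, and in \cite{CP1} hyperbolicity is not propagated by any such direct decomposition of the quadratic form. Instead one builds an atlas (a directed acyclic graph whose vertices carry nonnegative symmetric matrices and distinguished vectors, and whose edges carry linear linking maps satisfying a compatibility condition), verifies a \emph{local} hyperbolicity condition at each vertex, and then invokes a global inheritance theorem; that theorem in turn requires an irreducibility hypothesis that fails for general posets, so a separate perturbation-and-limit argument is needed, and the base of the induction must be handled by hand. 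Your Perron-vector heuristic gestures at the right thing (the inheritance theorem does hinge on a distinguished nonnegative vector of positive $\bM$-norm at each vertex), but the orthogonal-complement decomposition you propose is not how the induction actually runs. In short: the ingredients and the motivation for the doubling are correctly identified, but the inductive step as written omits the atlas machinery that constitutes the substance of the proof in \cite{CP1}.
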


\smallskip

We will also use the following combinatorial properties of the matrix $\bM$.
 By a direct calculation, the diagonal entry \ts $\aM_{x \ts x}$ \. for \. $x\in \min(Z_{\down})$, is equal to
\begin{equation}\label{eq:M-diag-comb}
\begin{split}
	\aM_{x  \ts x} \ &= \  \aN_{k+1}\big( f(x)= 1\big) \. - \. \aN_{k+1}\big( f(x)= 2, \ts x \. || \. f^{-1}(1) \big)  \\
	\ &= \  \aN_{k+1}\big( f(x)= 1\big) \. - \. \aN_{k+1}\big( f(x)= 1, \ts x \. || \. f^{-1}(2) \big)  \\
	\ &= \ \aN_{k+1}\big(f(x)=1, \ts x \prec f^{-1}(2) \big),
\end{split}
\end{equation}
where \. $x \,  || \, y$ \.  means that $x$ is incomparable to $y$ in $P$.
By a similar argument, for $x\in \max(Z_{\up})$\ts, we have:
\begin{align}
	\aM_{x \ts x} \ = \   \aN_{k-1}\big(f(x)=n, \ts x \succ f^{-1}(n-1) \big).
\end{align}

\smallskip

\begin{lemma}
	If \. $x\in \min(Z_{\down})$, then:
	 \begin{equation}\label{eq:M-sum-min}
		\sum_{y \ts\in \ts Z_{\down}}  \aMr_{x \ts y} \ = \ \aNr_{k+1}\big(f(x)=1\big) \qquad \text{ and } \qquad \sum_{y \ts\in \ts Z_{\up}}  \aMr_{x \ts y} \ = \ \aNr_{k}\big(f(x)=1\big).
	\end{equation}
	Similarly, if \. $x\in \max(Z_{\up})$, then:
\begin{equation}\label{eq:M-sum-max}
	\sum_{y \ts\in \ts Z_{\down}}  \aMr_{x \ts y} \ = \ \aNr_{k}\big(f(x)=n\big) \qquad \text{ and } \qquad \sum_{y \ts\in \ts Z_{\up}}  \aMr_{x \ts y} \ = \ \aNr_{k-1}\big(f(x)=n\big).
\end{equation}
\end{lemma}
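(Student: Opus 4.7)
The plan is to establish the two identities in \eqref{eq:M-sum-min} by a direct bookkeeping argument on the nonzero entries of $\bM$; the identities in \eqref{eq:M-sum-max} then follow by applying the same argument to the dual poset $P^\ast$ under the order-reversing substitution $f \mapsto n+1-f$, which interchanges $\min$ with $\max$, the positions $1 \leftrightarrow n$, and $2 \leftrightarrow n-1$.

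For the first identity in \eqref{eq:M-sum-min}, I would start by using the definition of $\bM$ to observe that, for $x \in \min(Z_{\down})$, the only nonzero entries of row $x$ within the $Z_{\down}$-block are the diagonal $\aM_{xx}$ and the off-diagonals $\aM_{xy}$ with $y \in \min(Z_{\down}) \setminus \{x\}$. Invoking the third line of \eqref{eq:M-diag-comb} to rewrite $\aM_{xx} = \aN_{k+1}(f(x)=1,\, x \prec f^{-1}(2))$, the sum becomes
$$\aN_{k+1}\big(f(x)=1,\, x \prec f^{-1}(2)\big) \, + \, \sum_{y \in \min(Z_{\down}) \setminus \{x\}} \aN_{k+1}\big(f(x)=1,\, f(y)=2\big).$$
I would then partition the linear extensions $f$ enumerated by $\aN_{k+1}(f(x)=1)$ according to the identity of $y^\star := f^{-1}(2)$. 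Since $f(\pa) = k+1 \ge 3$, we have $y^\star \neq \pa$. Every element strictly below $y^\star$ in $P$ must have $f$-value less than $2$, and the unique such position is occupied by $x$; hence the predecessor set of $y^\star$ in $P$ is either empty or equal to $\{x\}$. In the first case $y^\star \in \min(P) \setminus \{x,\pa\} = \min(Z_{\down}) \setminus \{x\}$, matching the sum; in the second case $x \prec y^\star$, matching the diagonal term. This yields the first identity.

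For the second identity, the only nonzero entries of row $x$ within the $Z_{\up}$-block correspond to $y \in \max(Z_{\up})$, so the sum collapses to $\sum_{y \in \max(Z_{\up})} \aN_k(f(x)=1,\, f(y)=n)$. I would then partition the extensions counted by $\aN_k(f(x)=1)$ according to $z^\star := f^{-1}(n)$. Since $z^\star$ can have no successor in $P$, it is maximal; and $z^\star \neq \pa$ (because $k < n$) and $z^\star \neq x$ (because $1 \neq n$), so $z^\star \in \max(P) \setminus \{\pa\} = \max(Z_{\up})$, producing the claimed identity.

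The only subtlety worth flagging is the potential ambiguity when one underlying element of $X \setminus \{\pa\}$ sits in both $\min(Z_{\down})$ and $\max(Z_{\up})$: such a term would require $f(x) = 1$ and $f(x) = n$ simultaneously, so it contributes $0$ and does not affect the decompositions. Apart from this small edge case, I anticipate no serious obstacle; the entire argument is a direct unfolding of the definition of $\bM$ combined with the fact that any linear extension with $f(x)=1$ uniquely determines the character of $f^{-1}(2)$ (resp.\ $f^{-1}(n)$).
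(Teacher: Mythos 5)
Your proof is correct and carries out exactly the ``direct calculation'' that the paper asserts (Lemma after \eqref{eq:M-diag-comb}, proved with ``The details are straightforward'') but does not spell out. You unfold the definition of $\bM$ to see which entries in row $x$ survive, rewrite the diagonal via \eqref{eq:M-diag-comb}, and then partition the linear extensions counted by $\aN_{k+1}(f(x)=1)$ (resp. $\aN_k(f(x)=1)$) according to $f^{-1}(2)$ (resp. $f^{-1}(n)$); the dual case follows by applying the same argument with $f \mapsto n+1-f$. This is precisely the intended verification, and the observations you make along the way -- that $f(\pa)=k+1\ge 3$ rules out $f^{-1}(2)=\pa$, that the predecessor set of $f^{-1}(2)$ is contained in $\{x\}$ so the two cases of the partition are exhaustive, and that terms with $y\in\min(Z_\down)\setminus\min(P)$ vanish because $\pa\prec y$ would force $f(\pa)<2$ -- are exactly the points one needs to check. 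The ``subtlety'' you flag at the end (an isolated underlying element appearing in both $\min(Z_\down)$ and $\max(Z_\up)$) is in fact a non-issue, since $Z_\down$ and $Z_\up$ are disjoint copies so there is no ambiguity in the definition of $\bM$, but it costs nothing to note that the corresponding cross-terms vanish.
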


\begin{proof} \ts
	This follows from a direct calculation. The details are straightforward.
\end{proof}

\medskip

\subsection{Correlations for deletion operations}\label{ss:proof-poset-corr-deletion}
Let \ts $P=(X,\prec)$ \ts
be a poset with $n$ elements.  Fix \ts $k \geq 1$ \ts and \ts $\pa\in X$.
As in the introduction, denote by \. $e_k(P):=\aN_{k}^{\<P\>}$ \. the
number of linear extensions \ts $f\in \Ec(P)$ \ts such that \. $f(\pa)=k$.
We start with the following correlation inequality
extending the upper bound in Theorem~\ref{t:poset-corr-deletion}.

\smallskip

\begin{thm}\label{t:poset-corr-deletion-strong}
Let \ts $P=(X,\prec)$ \ts be a poset with \ts $|X|=n>2$ \ts elements.
Fix an element \ts $\pa\in X$ \ts and integer  \ts $1 \leq k\leq n-2$.
Then, for every distinct minimal elements \ts $x,y\in \min(X-\pa)$, we have:
\begin{equation}\label{eq:poset-corr-tight-strong}
e_k(P) \, e_k(P-x-y)  \ \leq \ 2 \. e_k(P-x) \, e_k(P-y).
\end{equation}
\end{thm}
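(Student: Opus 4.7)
The plan is to derive \eqref{eq:poset-corr-tight-strong} from the hyperbolic framework of Section~\ref{s:atlas} applied to the atlas matrix $\bM := \bM(P, \pa, k+1)$ introduced in $\S$\ref{ss:proof-setup}. Since $k+1 \in \{2, \ldots, n-1\}$ under the hypothesis $1 \le k \le n-2$, this matrix is well-defined and, by Proposition~\ref{prop:M}, satisfies \eqref{eq:Hyp}. The key inequality will be Lemma~\ref{lem:atlas-corr-half}, namely $\bM_{\zb \! \zb} \bM_{\xb \! \yb} \le 2 \bM_{\xb \! \zb} \bM_{\yb \! \zb}$ for any nonnegative $\xb, \yb, \zb \in \Rb^d$; I would choose test vectors so that this becomes exactly the desired inequality.

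Specifically, I would take $\xb := \eb_x$ and $\yb := \eb_y$, the standard basis vectors corresponding to $x$ and $y$ inside the $Z_\down$-copy of $X - \pa$, and $\zb := \mathbf{1}_{Z_\up}$, the characteristic vector of $Z_\up$. Three of the four matrix entries then fall out from the definition of $\bM$ and the standard bijections that remove minimal elements: $\bM_{\xb \! \yb} = \aM_{xy} = \aN_{k+2}(f(x)=1, f(y)=2) = e_k(P - x - y)$ by deleting the two smallest values of $f$ and shifting $f(\pa)$ down by two; and, using formula~\eqref{eq:M-sum-min} with $k$ replaced by $k+1$, $\bM_{\xb \! \zb} = \aN_{k+1}(f(x) = 1) = e_k(P-x)$ and symmetrically $\bM_{\yb \! \zb} = e_k(P - y)$.

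The remaining identity $\bM_{\zb \! \zb} = e_k(P)$ is the main technical step. Expanding yields
\[
\bM_{\zb \! \zb} \, = \, \sum_{u \in \max(Z_\up)} \aM_{uu} \, + \, \sum_{u \ne v \in \max(Z_\up)} \aM_{uv},
\]
and the content is a cancellation: the negative part $\sum_u \aN_k(f(u)=n-1)$ of the diagonal entries and the off-diagonal sum $\sum_{u \ne v} \aN_k(f(u)=n, f(v)=n-1)$ should both equal $\aN_k\big(f^{-1}(n-1) \in \max(P-\pa)\big)$, so they cancel and leave $\sum_u \aN_k(f(u)=n) = e_k(P)$. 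The hypothesis $k \le n-2$ enters here to guarantee $f^{-1}(n-1) \ne \pa$, so that the conditions ``max in $P - \pa$'' and ``max in $P$'' coincide at the top two positions. With these four identifications in hand, Lemma~\ref{lem:atlas-corr-half} yields \eqref{eq:poset-corr-tight-strong} immediately, so the hardest step of the argument is this bookkeeping for $\bM_{\zb \! \zb}$.
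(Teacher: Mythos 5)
Your proposal is correct and follows essentially the same route as the paper: apply Lemma~\ref{lem:atlas-corr-half} to the atlas matrix with test vectors given by the characteristic vectors of $\{x\}$, $\{y\}$ in $Z_\down$ and of $Z_\up$. The only differences are cosmetic: the paper works with $\bM(P,\pa,k)$, derives the inequality in terms of $e_{k-1}(\cdot)$, and shifts $k \gets k+1$ at the end, whereas you shift upfront by taking $\bM(P,\pa,k+1)$; and the paper evaluates $\bM_{\zb\zb}$ by citing formula~\eqref{eq:M-sum-max} directly, whereas you re-derive that identity by hand via the cancellation between the negative diagonal contributions $\sum_u \aN_k(f(u)=n-1)$ and the off-diagonal sum $\sum_{u\ne v}\aN_k(f(u)=n,f(v)=n-1)$ (both equal to the count of $f$ with $f(\pa)=k$ and $f^{-1}(n-1)\in\max(P-\pa)$) --- which is exactly what the cited lemma encodes.
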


\smallskip

\begin{proof}
Let \. $\xb,\yb \in \Rb^{d}$ \. be the characteristic vectors of \.
$x,y \in \min(Z_{\down})$\ts. Similarly, let \. $\zb \in \Rb^{d}$ \.
be the characteristic vector of \ts $Z_{\up}$\ts.
Let \. $\bM:=\bM(P,\pa,k)$ \. be the matrix in Proposition~\ref{prop:M}.
Note that
	\[
    \bM_{\xb\!\yb} \ = \ \aN_{k+1}\big(f(x)=1, \ts f(y)=2 \big) \ = \  e_{k-1}(P-x-y)
    \]	
and
	\begin{align*}
		\bM_{\xb \! \zb} \ = \   \sum_{z \ts \in \ts Z_{\up}} \aM_{x \ts z} \ =_{\eqref{eq:M-sum-min}} \ \aN_{k}\big(f(x)=1\big) \ = \  e_{k-1}(P-x).
	\end{align*}
We have \. $\bM_{\yb\!\zb} \. = \. e_{k-1}(P-y)$ \. by the same reasoning.
	Finally, note that
	\[  \bM_{\zb\!\zb} \ = \ \sum_{v \ts \in \ts Z_{\up}} \. \sum_{w \ts \in \ts Z_{\up}} \. \aM_{v \ts w} \ =_{\eqref{eq:M-sum-max}} \ e_{k-1}(P).  \]
		Substituting these equations into \eqref{eq:atlas-corr-half}, gives
\[  		e_{k-1}(P) \. e_{k-1}(P-x-y)  \ \leq \ 2 \. e_{k-1}(P- x) \. e_{k-1}(P-y),
\]
		Letting \. $k \gets k+1$ \. implies the result.
\end{proof}

\begin{proof}[Proof of Theorem~\ref{t:poset-corr-deletion}]
Let \. $P':=P+\{\pa\}$ \. be the parallel sum of the poset $P$ and the single element~$\{\pa\}$.
Note that
\begin{alignat*}{2}
 e_{k-1}(P') \ & = \ e(P)\ts, \qquad  & e_{k-1}(P'-x-y)  \ & = \  e(P-x-y)\ts, \\
e_{k-1}(P'-x)  \ & = \  e(P-x)\ts,  \qquad  &e_{k-1}(P'-y)  \ & = \  e(P-y)\ts.
\end{alignat*}
The theorem now follows by applying Theorem~\ref{t:poset-corr-deletion-strong} to the poset~$P'$.
\end{proof}


\medskip

\subsection{Covariances for random linear extensions}\label{ss:proof-poset-covariances}
We are now ready to prove Theorem~\ref{t:poset-covariances-multiple}.

\smallskip

\begin{proof}[Proof of Theorem~\ref{t:poset-covariances-multiple}]
We create a new poset \. $P':=(X',\prec')$ \. as follows.
Let the ground set \. $X':=X\cup\{x,y\}$ \ts be obtained by adding two new elements \. $x,y$.
Let the partial order \. $\prec'$ \. be the closure of the partial order \. $\prec$ \.
and  the extra relations
\begin{equation}\label{eq:poset-covariances-relations}
		x \. \prec' \. v \quad \text{for every } \ v \in A,  \qquad  y \. \prec' \. w \quad \text{for every } \ w \in B.
\end{equation}
	Note that $x$ and $y$ are minimal elements of $P'$.
	
Clearly, \. $e(P'-x-y)  =  e(P)$.
Note that, for every linear extension \ts $f\in \Ec(P)=\Ec(P'-x-y)$,
there are exactly \. $f_{\min}(B)$ \. ways to add $y$ to form a linear extension of~$P'-x$.
This implies that
		\[ e(P'-x) \ = \  \sum_{i\geq 1} \. i \.\cdot \. \aN^{\<P\>}\big(f_{\min}(B)=i\big) \ = \  \Eb\big[ f_{\min}(B)\big] \.\cdot \.  e(P). \]
	By the same reasoning, we have
	\[e(P'-y)  \ = \ \Eb\big[ f_{\min}(A)\big] \.\cdot \. e(P).   \]
		Finally, note that for every linear extension \ts $f\in \Ec(P)=\Ec(P'-x-y)$, there are exactly \. $f_{\min}(A) \. f_{\min}(B) \. + \. \min\{f_{\min}(A), \ts f_{\min}(B)\}$ \. ways to add $x$ and $y$ to form a linear extension of~$P'$.
		This implies that
	\begin{align*}
		e(P') \ &= \  \sum_{i\geq 1}\. \sum_{j\geq 1} \. \big(i\ts j\. + \. \min\{i,j\}\big) \.\cdot \. \aN^{\<P\>}\big(f_{\min}(A)=i, \ts f_{\min}(B)=j\big) \\
		\ &= \ \bigg( \Eb\big[f_{\min}(A) \ts f_{\min}(B)\big] \ + \ \Eb\big[f_{\min}(A \cup B)\big]  \bigg) \. \cdot \. e(P).
	\end{align*}
	Combining all these equations, we get
	\[  \frac{e(P') \.\cdot \. e(P'-x-y)}{e(P'-x) \.\cdot \. e(P'-y)} \ = \
	\frac{\Eb\big[f_{\min}(A) \ts f_{\min}(B) \big] \. + \. \Eb\big[f_{\min}(A\cup B)\big]}{\Eb\big[f_{\min}(A)\big] \ts \Eb\big[f_{\min}(B)\big]} \..
\]
	The result now follows from the upper bound in Theorem~\ref{t:poset-corr-deletion} applied
to the poset~$P'$.
\end{proof}

\medskip

\subsection{Stanley type inequalities for subsets}\label{ss:proof-poset-Stanley-subset}
As in~$\S$\ref{ss:proof-poset-corr-deletion}, we start with the following result
implying Corollary~\ref{c:Stanley-poset-two}.

\smallskip

\begin{lemma}\label{lem:poset-Stanley-subset-cdf-Nform}
Let \ts $P=(X,\prec)$ \ts be a poset on \ts $|X| = n \ge 3$ \ts elements,
let $\pa\in X$, and let \ts $A \subseteq X-a$ \ts be
a nonempty subset.  Then:
	\begin{align*}
     \aNr_k\big(1\notin f(A), \ts 2\in f(A)\big)^2  \ \geq \ \aNr_k\big(1\in f(A), \ts 2 \in f(A\!\uparrow)\big)  \. \cdot \.
		\aNr_k(1,2\notin f(A)),
	\end{align*}
for every \ts $3 \leq k \leq n$.
\end{lemma}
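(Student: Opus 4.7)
The plan is to apply the hyperbolic inequality \eqref{eq:Hyp} to the matrix $\bM := \bM(P,\pa,k-1)$ from Proposition~\ref{prop:M}, which is well defined since $k \in \{3,\ldots,n\}$ forces $k - 1 \in \{2,\ldots,n-1\}$. Set $B := A \cap \min(P)$ and $C := (\min(P) - A) \cap (X - \pa)$, and let $\vb, \wb \in \Rb^d$ be the characteristic vectors of $B$ and $C$ respectively, viewed as subsets of the $Z_{\down}$ component (and zero on $Z_{\up}$). Note that $B \cap C = \nothing$ and $B \cup C \subseteq \min(P) - \{\pa\}$.

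The main step is the combinatorial identification of the three quadratic forms. Recall that in any linear extension $f$ of $P$, the element $f^{-1}(2)$ is either minimal in $P$ and incomparable to $f^{-1}(1)$, or else it uniquely covers $f^{-1}(1)$. Combining this dichotomy with~\eqref{eq:M-diag-comb} and the off-diagonal description of the $Z_{\down}$-block of $\bM$, a case analysis on whether $f^{-1}(2)$ is minimal yields
\begin{align*}
\bM_{\vb\vb} \ &= \ \aN_k\big(1 \in f(A), \ts 2 \in f(A\!\uparrow)\big), \\
\bM_{\wb\wb} \ &= \ \aN_k\big(1, 2 \notin f(A)\big) \. + \. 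Y, \\
\bM_{\vb\wb} \ &= \ \aN_k\big(1 \notin f(A), \ts 2 \in f(A)\big) \. - \. Y,
\end{align*}
where $Y := \aN_k\big(1 \notin f(A), \ts 2 \in f(A), \ts f^{-1}(2) \notin \min(P)\big)$ is the ``slippage'' count of extensions in which $f^{-1}(2) \in A$ is non-minimal and uniquely covers $f^{-1}(1) \in C$.

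Since all entries of $\bM$ restricted to $Z_{\down} \times Z_{\down}$ are nonnegative we have $\bM_{\wb\wb} \geq 0$, so \eqref{eq:Hyp} gives $\bM_{\vb\wb}^{\ts 2} \geq \bM_{\vb\vb} \. \bM_{\wb\wb}$. Writing $\alpha := \aN_k(1 \notin f(A), 2 \in f(A))$, $\beta := \aN_k(1 \in f(A), 2 \in f(A\!\uparrow))$, and $\gamma := \aN_k(1, 2 \notin f(A))$, this becomes $(\alpha - Y)^{2} \geq \beta(\gamma + Y)$, which rearranges to
\[
\alpha^{2} \. - \. \beta\ts\gamma \ \geq \ Y\.(2\alpha \. + \. \beta \. - \. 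Y).
\]
Since $Y$ counts a subevent of what $\alpha$ counts, $0 \leq Y \leq \alpha$, so $2\alpha + \beta - Y \geq \alpha + \beta \geq 0$ and the right-hand side is nonnegative. Thus $\alpha^{2} \geq \beta\ts\gamma$, as required.

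The main obstacle is the combinatorial bookkeeping for the three forms. The diagonal entries $\aM_{xx}$ do not record whether $f^{-1}(2) \in A$, and consequently $\bM_{\wb\wb}$ overcounts $\aN_k(1, 2 \notin f(A))$ by precisely the contribution from non-minimal $f^{-1}(2) \in A$ uniquely covering $f^{-1}(1) \in C$. Fortunately, the same contribution is exactly the amount by which $\bM_{\vb\wb}$ undercounts $\aN_k(1 \notin f(A), 2 \in f(A))$, so the correction $Y$ appears symmetrically with opposite signs. This coincidence is what makes the algebraic step clean; the bound $Y \leq \alpha$ is then essentially automatic.
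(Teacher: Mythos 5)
Your proposal is correct and follows essentially the same route as the paper: both apply \eqref{eq:Hyp} to the matrix $\bM(P,\pa,\cdot)$ from Proposition~\ref{prop:M}, using characteristic vectors supported on the $Z_{\down}$ block that encode $A$ and its complement among the minimal elements, and both combinatorially identify the three quadratic forms before rearranging. The only (cosmetic) difference is bookkeeping: the paper takes $\xb,\yb$ to be the characteristic vectors of $A$ and $X-A-\pa$ and argues with one-sided inequalities (its equations \eqref{eq:alfa2} and \eqref{eq:alfa3} read $\bM_{\yb\!\yb}\geq\aN_{k+1}(1,2\notin f(A))$ and $\bM_{\xb\!\yb}\leq\aN_{k+1}(1\notin f(A),2\in f(A))$, which together with $\bM_{\xb\!\yb}\geq 0$ compose directly with \eqref{eq:Hyp}), whereas you restrict the vectors to the minimal parts $B$ and $C$, make all three identifications exact by isolating the common slippage term $Y$ (the excess in $\bM_{\wb\!\wb}$ is precisely the deficit in $\bM_{\vb\!\wb}$, as you note), and then show the resulting quadratic remainder $Y(2\alpha+\beta-Y)$ is nonnegative via $0\le Y\le\alpha$. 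Since the matrix vanishes off $\min(Z_{\down})$, the quadratic forms in both versions coincide, so the arguments are interchangeable; your version is a little longer but makes the source of the correction transparent, while the paper's one-sided bounds sidestep the need for the extra inequality $Y\le\alpha$.
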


\smallskip

\begin{proof}
Let \. $\xb \in \Rb^{d}$ \. be the characteristic vector of \. $A  \subseteq  Z_{\down}$\., and
let \. $\yb \in \Rb^{d}$ \. be the characteristic vector of \. $(X-A-a) \subseteq  Z_{\down}$\..
	Let \. $\bM:=\bM(P,\pa,k)$ \. be the matrix in Proposition~\ref{prop:M}.
	Let \. $A' \ts := \ts A \cap \min(P)$\..
	Then it follows from \eqref{eq:M-diag-comb} that:
	\begin{align*}
		\bM_{\xb \! \xb} \ &= \ \sum_{x \ts \in \ts A' } \. \Big[ \aN_{k+1}\big(f(x)=1, \ts x \prec f^{-1}(2)\big) \, + \, \sum_{\substack{y \ts \in \ts A' , \. y \. || \. x}}  \aN_{k+1}\big(f(x)=1, \ts f(y)=2\big) \Big].
	\end{align*}
	Since the terms in the sums above  are only nonzero if $x, y \in \min(P)$,
	it then follows that
	\[	\bM_{\xb \! \xb}		\ = \ \sum_{x \ts \in \ts A } \. \Big[ \aN_{k+1}\big(f(x)=1, \ts x \prec f^{-1}(2)\big) \, + \, \sum_{\substack{y \ts \in \ts A, \. y \. || \. x}}  \aN_{k+1}\big(f(x)=1, \ts f(y)=2\big) \Big].\]
	This is then equal to
	\begin{equation}\label{eq:alfa1}
	\aligned	
        \bM_{\xb \! \xb}
       \    = \  \aN_{k+1}\big( 1\in f(A), \ts 2 \in f(A\!\uparrow) \big),
    \endaligned
    \end{equation}
    which follows   from the definition  of
    the upper closure \ts $A\!\uparrow$.


	By the same argument that proves \eqref{eq:alfa1}, we have:
	\begin{equation}\label{eq:alfa2}
		\begin{split}
				 \bM_{\yb \! \yb}
				 \ &= \ \sum_{x \ts \notin \ts A } \. \Big[ \aN_{k+1}\big(f(x)=1, \ts x \prec f^{-1}(2)\big) \, + \, \sum_{\substack{y \ts \notin \ts A, \. y \. || \. x}}  \aN_{k+1}\big(f(x)=1, \ts f(y)=2\big) \Big].\\
				  &\geq  \,  \aN_{k+1} \big(1,2\notin f(A)\big).
		\end{split}
	\end{equation}	

Now, let 	       \. $B':= \min(P)\sm A$\..
Then
	\begin{align*}
		\bM_{\xb\!\yb} \, = \, \bM_{\yb\!\xb} \, = \,  \sum_{y \in B'} \. \sum_{x \in A'} \. \aN_{k+1}\big(f(y)=1, \. f(x)=2\big) \ = \  \aN_{k+1}\big(1\in f(B'), \ts 2\in f(A')\big).
	\end{align*}
	By the definition of $B'$, we then get
	\begin{equation}\label{eq:alfa3}
		\bM_{\xb\!\yb} \  \ = \  \aN_{k+1}\big(1\notin f(A), \ts 2\in f(A')\big) \ \leq  \ \aN_{k+1}\big(1\notin f(A), \ts 2\in f(A)\big).
	\end{equation}
	Combining \eqref{eq:alfa1}, \eqref{eq:alfa2}, \eqref{eq:alfa3} into \eqref{eq:Hyp},
and substituting \ts $k \gets k-1$, we obtain the result.
\end{proof}

\smallskip

\begin{proof}[Proof of the first part of Theorem~\ref{t:poset-Stanley-subset}]
Let \. $P':=P+\{\pa\}$ \. be the parallel sum of the poset $P$ and the single element~$\{\pa\}$.
%
	Note that \. $\aN^{\<P'\>}_{k} = \aN^{\<P\>}$\..
	Applying Lemma~\ref{lem:poset-Stanley-subset-cdf-Nform} to the poset $P'$,
	we get
		\begin{align}\label{eq:Stanley-Lemma-cconsequence}
	\aN \big(1\notin f(A), \ts 2 \in f(A)\big)^2 \ \geq \ 	\aN \big(1\in f(A), \ts 2 \in f(A\!\uparrow) \big)  \. \cdot \.
		\aN\big(1,2\notin f(A)\big)\..
	\end{align}
	Now let  \. $A':=  A \ts \cap \ts \min(P)$ \. and \. $B':= \min(P)\sm A$\.. Note that
\begin{align}\label{eq:Stanley-Lemma-cconsequence-calc}\aligned
\aN \big(1\in f(A), \ts 2 \in f(A\!\uparrow) \big) \, & =  \, \aN\big(1\in f(A) \big) \.
 - \. \aN \big(1\in f(A), \ts 2 \in f(B') \big) \\
 &  =  \, \aN\big(1\in f(A) \big) \.
 - \. \aN \big(1\in f(B'), \ts 2 \in f(A') \big) \\
&  \geq  \, \aN\big(1\in f(A) \big) \.
 - \. \aN \big(1\notin f(A), \ts 2 \in f(A) \big).
\endaligned
 \end{align}
Here the first equality is by  the definition of the upper closure $A\!\uparrow$,
the second equality is obtained by swapping \. $1 \lra 2$ \. in~$f$, and the third inequality is by the same argument that proves \eqref{eq:alfa3}.

Substituting \eqref{eq:Stanley-Lemma-cconsequence-calc} into~\eqref{eq:Stanley-Lemma-cconsequence} and
dividing both sides of the inequality by \. $e(P)^2$, we get a probabilistic inequality:
		\begin{equation}\label{eq:beta1}
	\Pb \big(1\notin f(A), \ts 2 \in f(A)\big)^2 \ \geq \ 	\big[ \Pb \big(1\in f(A) \big)  - \Pb \big(1\notin f(A), \ts 2 \in f(A) \big) \big]  \.\cdot \.
		\Pb\big(1,2\notin f(A)\big)\..
	\end{equation}
Let \. $\al := \Pb\big(1\notin f(A)\big)$, \. $\be := \Pb\big(1,2\notin f(A)\big)$.
 	Note that
 	\[  \Pb \big(1\in f(A) \big) \, =  \, 1-\al, \qquad \Pb \big(1\notin f(A), \ts 2 \in f(A) \big) \, = \,  \al\ts -\ts \be. \]
	Substituting this into \eqref{eq:beta1}, we get
	\[  (\al-\be)^2 \, \geq \, (1-2\al+\be) \. \be\.,  \]
	which is equivalent to \. $\al^2 \geq \be$\..  This gives \eqref{eq:Stanley-subset1}, as desired.
\end{proof}

\smallskip

To finish the proof of  Theorem~\ref{t:poset-Stanley-subset} we need the following lemma.

\smallskip

\begin{lemma}\label{lem:poset-Stanley-subset-yinyang-Nform}
Let \ts $P=(X,\prec)$ \ts be a poset on \ts $|X| = n \ge 3$ \ts elements,
let $\pa\in X$, and let \ts $A \subseteq X-a$ \ts be
a nonempty subset.  Then:
		\begin{align*}
	\aNr_k\big(1\in f(A)\big)^2 \ \geq \ 	\aNr_k \big(1\in f(A), \ts 2\in f(A\!\uparrow)  \big)  \.
		\cdot \. e_k(P),
	\end{align*}
for every \ts $3 \leq k \leq n$.
\end{lemma}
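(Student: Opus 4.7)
The plan is to mimic the proof of Lemma~\ref{lem:poset-Stanley-subset-cdf-Nform}, applying the hyperbolic property \eqref{eq:Hyp} to the matrix \. $\bM := \bM(P,\pa,k-1)$ \. from Proposition~\ref{prop:M}, but with a cleverer choice of the second vector. Specifically, let \. $\xb \in \Rb^d$ \. be the characteristic vector of \. $A \subseteq Z_{\down}$\., and let \. $\zb \in \Rb^d$ \. be the characteristic vector of the entire down copy \. $Z_{\down}$\.. Both vectors lie in \. $\Rb^d_+$\.. The hypothesis \. $3 \le k \le n$ \. is exactly what is needed so that \. $\bM(P,\pa,k-1)$ \. is defined.

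The bilinear form \. $\bM_{\xb\!\xb}$ \. is computed precisely as in the proof of Lemma~\ref{lem:poset-Stanley-subset-cdf-Nform} (with the index shifted), yielding
\[
\bM_{\xb\!\xb} \ = \ \aN_k\big(1 \in f(A),\ 2 \in f(A\!\uparrow)\big).
\]
For the cross term, rows indexed by \. $x \in A \sm \min(P)$ \. are identically zero, so only \. $x \in A \cap \min(P)$ \. contribute. Invoking the first identity in \eqref{eq:M-sum-min}, we have \. $\sum_{y \in Z_{\down}} \aM_{x y} = \aN_k(f(x)=1)$ \. for each such \. $x$\., and since the events \. $\{f(x)=1\}$ \. for distinct minimal elements are mutually exclusive, and since \. $1 \in f(A)$ \. forces \. $f^{-1}(1) \in A \cap \min(P)$\.,
\[
\bM_{\xb\!\zb} \ = \ \sum_{x \ts\in\ts A \cap \min(P)} \aN_k\big(f(x)=1\big) \ = \ \aN_k\big(1 \in f(A)\big).
\]
By the same accounting applied to all of \. $\min(P)$\.,
\[
\bM_{\zb\!\zb} \ = \ \sum_{v \ts\in\ts \min(P)} \aN_k\big(f(v)=1\big) \ = \ e_k(P),
\]
since every linear extension assigns the value \. $1$ \. to exactly one minimal element.

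Since \. $\bM_{\zb\!\zb} = e_k(P) \ge 0$\., the hyperbolic property \eqref{eq:Hyp} applies to the pair \. $(\xb,\zb)$ \. and yields \. $\bM_{\xb\!\zb}^2 \ge \bM_{\xb\!\xb}\.\bM_{\zb\!\zb}$\., which is exactly the claimed inequality. There is no real obstacle here beyond identifying the correct second vector: the key insight (which distinguishes this lemma from Lemma~\ref{lem:poset-Stanley-subset-cdf-Nform}, where \. $\yb$ \. was the characteristic vector of \. $X - A - \pa$\.) is that enlarging the support to all of \. $Z_{\down}$ \. converts the row-sums into \. $e_k(P)$\., producing the total count on the right-hand side of the desired inequality.
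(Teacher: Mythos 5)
Your proof is correct and is essentially identical to the paper's: the paper also takes the second vector to be the characteristic vector of all of $Z_{\down}$ (they call it $\yb$ rather than $\zb$), computes $\bM_{\xb\!\xb}$, $\bM_{\xb\!\yb}$, $\bM_{\yb\!\yb}$ exactly as you do, and applies \eqref{eq:Hyp}. The only cosmetic difference is that you work with $\bM(P,\pa,k-1)$ from the start, whereas the paper works with $\bM(P,\pa,k)$ and performs the substitution $k\gets k-1$ at the very end.
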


\begin{proof}
Let \. $\xb \in \Rb_+^{d}$ \. be the characteristic vector of \. $A  \subseteq  Z_{\down}$\., and
let \. $\yb \in \Rb_+^{d}$ \. be the characteristic vector of \. $Z_{\down}$\..
Finally, let \. $\bM:=\bM(P,\pa,k)$ \. be the matrix in Proposition~\ref{prop:M}.
It then follows from the same argument as in \eqref{eq:alfa1} that
		\begin{equation*}
		\bM_{\xb\! \xb} \ = \ \aN_{k+1}\big(1\in f(A), \ts 2 \in f(A\!\uparrow)\big).
	\end{equation*}
	Note also that
		\[  \bM_{\yb\!\yb} \ = \ \sum_{v \ts \in \ts Z_{\down}} \. \sum_{w \ts \in \ts  Z_{\down}} \. \aM_{v \ts w} \ =_{\eqref{eq:M-sum-min}} \ e_{k+1}(P).  \]
Now let  \. $A' \ts := \ts A \ts \cap  \ts \min(P)$\., and note that
	\begin{align*}
		\bM_{\xb\!\yb} \ =_{\eqref{eq:M-sum-min}} \ \sum_{x \in A'} \.  \. \aN_{k+1}\big( f(x)=1\big)
 \ = \   \aN_{k+1}\big(1\in f(A')\big) \ = \ \aN_{k+1}\big(1\in f(A)\big).
	\end{align*}
		Combining these three equations with \eqref{eq:Hyp},
and substituting \ts $k \gets k-1$ \ts implies the result.
\end{proof}

\smallskip

\begin{proof}[Proof of the second part of Theorem~\ref{t:poset-Stanley-subset}]
As before, let \. $P':=P+\{\pa\}$ \. be the parallel sum of the poset $P$ and the single element~$\{\pa\}$,
and note that \. $\aN^{\<P'\>}_{k} = \aN^{\<P\>}$.
Applying Lemma~\ref{lem:poset-Stanley-subset-yinyang-Nform} to the poset $P'$, we get
	\begin{align*}
		\aN\big(1\in f(A)\big)^2 \ \geq \ \aN \big(1\in f(A), \ts 2\in f(A\!\uparrow) \big) \.\cdot  \.
		e(P)\..
	\end{align*}
The inequality~\eqref{eq:Stanley-Lemma-cconsequence-calc} gives
	\begin{align*}
 \aN\big(1\in f(A)\big)^2 \ \geq \ 	\Big[ \aN \big(1\in f(A)\big)  - \aN \big(1\notin f(A), \ts 2\in f(A) \big) \Big] \.\cdot  \.
		e(P)\..
	\end{align*}
Dividing both sides of the inequality by $e(P)^2$, we get a probabilistic version:
				\begin{align*}
	\Pb\big(1\in f(A)\big)^2 \ \geq \  \Pb \big(1\in f(A) \big)  - \Pb \big(1\notin f(A), \ts 2 \in f(A) \big).
	\end{align*}
This inequality is equivalent to \eqref{eq:Stanley-subset2}, as desired.
\end{proof}

\medskip

\subsection{Multiple subsets}
\label{ss:proof-poset-disjoint}
Let \ts $P=(X,\prec)$ \ts be a poset on \ts $|X| = n$ \ts elements and fix \ts
$\pa\in X$.  From this point on, let \. $A_1,A_2,A_3\ssu \min(P-a)$ \. be
disjoint subsets of minimal elements.
Denote by \. $\xb_i \in \Rb^{d}$, $1\le i\le 3$,  the characteristic vectors of \.
$A_i  \subseteq  Z_{\down}$\..
%
Let \. $\bM:=\bM(P,\pa,k)$ \. be the matrix in Proposition~\ref{prop:M}.
\smallskip

\begin{lemma}\label{lem:poset-disjoint-comb-interpret}
For all \. $1\le i,j\le 3$, we have:
	\begin{equation}\label{eq:poset-disjoint-comb-interpret}
\bMr_{\xb_i\!\xb_j} \ = \ \left\{ \aligned
& \, \aNr_{k+1} \big(1\in f(A_i), \ts 2 \in f(A_i\!\uparrow) \big) \quad \text{if} \ \ i=j, \\
& \, \aNr_{k+1} \big(1\in f(A_i), \. 2\in f(A_j)  \big) \quad \text{otherwise}.	
\endaligned \right.
\end{equation}
\end{lemma}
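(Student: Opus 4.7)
The plan is a direct computation, bilinearly expanding $\bMr_{\xb_i \xb_j}$ as a sum of matrix entries and then identifying the result combinatorially using the definition of $\bMr$ in $\S$\ref{ss:proof-setup} and the formula~\eqref{eq:M-diag-comb} for the diagonal entries. No nontrivial inequality is needed; the content is purely bookkeeping.

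First I would treat the easy case $i \ne j$. Since $A_i, A_j \subseteq \min(P-a)$ are disjoint, all pairs $(x,y) \in A_i \times A_j$ consist of distinct minimal elements, so only the off-diagonal rule for $\bMr$ applies and
\[
\bMr_{\xb_i \xb_j} \, = \, \sum_{x \in A_i} \sum_{y \in A_j} \aMr_{x\ts y} \, = \, \sum_{x \in A_i} \sum_{y \in A_j} \aNr_{k+1}\big( f(x)=1, \ts f(y)=2\big).
\]
Because $A_i \cap A_j = \varnothing$, the events $\{f(x)=1, f(y)=2\}$ for different pairs $(x,y) \in A_i \times A_j$ are disjoint, so the double sum collapses to $\aNr_{k+1}\big(1\in f(A_i), 2\in f(A_j)\big)$, as required.

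The case $i = j$ is the main step. I would separate diagonal from off-diagonal contributions:
\[
\bMr_{\xb_i \xb_i} \, = \, \sum_{x \in A_i} \aMr_{x\ts x} \, + \, \sum_{\substack{x,y \in A_i \\ x\ne y}} \aNr_{k+1}\big(f(x)=1,\ts f(y)=2\big),
\]
and then apply the identity $\aMr_{xx} = \aNr_{k+1}\big(f(x)=1,\ts x \prec f^{-1}(2)\big)$ from~\eqref{eq:M-diag-comb}. The key observation is this: fix $x \in A_i$ with $f(x)=1$ and let $v := f^{-1}(2)$. Since $x \in \min(P)$, the element $v$ is forced to satisfy either $v \in \min(P) - x$ (so $v \,||\, x$) or $v \succ x$ with $x$ being the unique strict predecessor of $v$. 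Hence the condition $v \in A_i\!\uparrow$ splits into two disjoint cases: $v \in A_i - x$ (contributing the off-diagonal terms) or $v \succ x$ (contributing exactly $\aMr_{xx}$). Summing over $x \in A_i$ and using that each linear extension counted by $\aNr_{k+1}\big(1\in f(A_i), 2 \in f(A_i\!\uparrow)\big)$ determines a unique such $x$, I obtain
\[
\bMr_{\xb_i \xb_i} \, = \, \aNr_{k+1}\big(1\in f(A_i), \ts 2 \in f(A_i\!\uparrow)\big).
\]

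The only subtle point, and the place that needs care, is ensuring that the decomposition of the event $\{v \in A_i\!\uparrow\}$ is exhaustive and disjoint under the hypotheses $f(x)=1$ and $x \in \min(P)$. This was already exploited in the derivation of~\eqref{eq:alfa1}, so the same bookkeeping carries over verbatim; I would reference that passage to avoid repeating the argument. Everything else is a routine reindexing.
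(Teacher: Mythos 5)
Your proof is correct and takes essentially the same route as the paper: both expand $\bMr_{\xb_i\!\xb_j}$ bilinearly, collapse the off-diagonal case directly, and reduce the diagonal case to the combinatorial identity already established as~\eqref{eq:alfa1}, which the paper simply cites and you re-derive before noting the reference. The only phrase worth tightening is ``since $x \in \min(P)$'': the hypothesis gives $x \in A_i \subseteq \min(P-\pa)$, not $x \in \min(P)$, but those extra $x$ contribute zero since no linear extension with $f(\pa)=k+1\ge 3$ can have $f(x)=1$ -- exactly the observation made explicitly in the proof of~\eqref{eq:alfa1}.
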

%

\smallskip

\begin{proof}  Using \eqref{eq:alfa1} with \. $\xb \gets \xb_i$ \.  and \. $A \gets A_i$ \. we have:
$$
\bM_{\xb_i\!\xb_i} \, = \,  \aN_{k+1} \big(1\in f(A_i), \ts 2 \in f(A_i\!\uparrow) \big).
$$
Similarly, for \ts $i\ne j$, we have:
	\begin{align*}
		\bM_{\xb_i \! \xb_j} \ = \  \sum_{x \in A_i} \. \sum_{y \in A_j}\aN_{k+1}\big(f(x)=1, f(y)=2\big) \ = \ \aN_{k+1} \big(1\in f(A_i), \. 2\in f(A_j)  \big),
	\end{align*}
	which completes the proof.
\end{proof}

\smallskip

\begin{proof}[Proof of Theorem~\ref{t:poset-disjoint-logconcave}]
	By \eqref{eq:Hyp}, we have:
	\[ \bM_{\xb_1\!\xb_2}^2 \ \geq \ \bM_{\xb_1\!\xb_1} \. \bM_{\xb_2\!\xb_2}\.. \]
	Using \eqref{eq:poset-disjoint-comb-interpret} in this inequality, we get:
	\begin{align*}
		\aN_{k+1} \big(1\in f(A_1), \ts 2\in f(A_2)  \big)^2 \ \geq \ \aN_{k+1} \big(1\in f(A_1), \ts 2 \in f(A_1\!\uparrow) \big) \cdot
		\aN_{k+1} \big(1\in f(A_2), \ts 2 \in f(A_2\!\uparrow) \big)\..
	\end{align*}
As before, let \. $P':=P+\{\pa\}$ \. be the parallel sum of the poset $P$ and the single element~$\{\pa\}$,
and note that \. $\aN^{\<P'\>}_{k+1} = \aN^{\<P\>}$.
This and the substitution \. $A_1 \gets A$\., \. $A_2 \gets B$\.,  give
		\begin{equation*}
		\aN \big(1\in f(A), \ts 2\in f(B)  \big)^2 \, \geq \, \aN \big(1\in f(A), \ts 2 \in f(A\!\uparrow) \big) \cdot
		\aN \big(1\in f(B), \ts 2 \in f(B\!\uparrow) \big),
	\end{equation*}
as desired.
\end{proof}

\smallskip

\begin{proof}[Proof of Theorem~\ref{t:poset-disjoint-half-correlation}]
	Substituting \. $\xb \gets \xb_1$\., \. \. $\yb \gets \xb_2$\., \. $\zb \gets \xb_3$ \. and \. $A_1 \gets A$, \.
	\. $A_2 \gets B$, \. $A_3 \gets C$,
	into Lemma~\ref{lem:atlas-corr-half} and Lemma~\ref{lem:poset-disjoint-comb-interpret}, we get
$$
\aligned
& \aN_{k+1} \big(1\in f(C), \ts 2 \in f(C\!\uparrow)\big) \. \cdot \. \aN_{k+1} \big(1\in f(A), \ts 2\in f(B)\big) \\
& \hskip.5cm \ \leq \  2 \. \aN_{k+1} \big(1\in f(A), \ts 2\in f(C)\big) \.\cdot \. \aN_{k+1} \big(1\in f(B), \ts 2\in f(C)\big)\ts.
\endaligned
$$
As before, let \. $P':=P+\{\pa\}$ \. be the parallel sum of the poset $P$ and the single element~$\{\pa\}$,
and repeat the substitution argument \. $\aN_{k+1} \gets \aN$ \. as above.  The first inequality
\eqref{eq:Stanley-ext-12} now follows by dividing both sides of the equation by  \ts $e(P)^2$.
	
The second inequality \eqref{eq:Stanley-ext-12-prime} follows from a similar argument applied to
Lemma~\ref{lem:atlas-corr-conditional}.  We omit the details for brevity.
\end{proof}

\bigskip

\section{Log-concavity implications}\label{s:log-proof}

\subsection{Proof of Corollary~\ref{c:poset-Stanley-cdf}}\label{ss:cor-proof-two-lemmas}
Both inequalities in the corollary follow immediately from the following
two general log-concavity results.  The proofs are straightforward and
included here for completeness.\footnote{For example, a continuous
version of Lemma~\ref{l:poset-Stanley-one} is well-known in the Economics
literature, see \cite[Thm~1]{BB}. }

\smallskip

\begin{lemma}\label{l:poset-Stanley-one}
Let \ts $\{p_1\ts, \ldots \ts, p_n\}$ \ts be a log-concave sequence of
real numbers \ts $p_k \ge 0$.  Let \. $s_k :=  p_k+p_{k+1} + \ldots + p_n$\ts,
for all \. $1 \leq k \leq n$. Then \. $\{s_1,\ldots,s_n\}$ \. is also log-concave.
\end{lemma}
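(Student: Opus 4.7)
The plan is to reduce the desired inequality $s_k^2 \geq s_{k-1}\ts s_{k+1}$ (for $2 \leq k \leq n-1$) to an inequality that can be verified termwise from the log-concavity of $\{p_i\}$. I would start by expanding both products as double sums: $s_k^2 = \sum_{i,j \geq k} p_i p_j$ and $s_{k-1}\ts s_{k+1} = \sum_{i \geq k-1,\ j \geq k+1} p_i p_j$. After peeling off the $j=k$ slice from the first sum and the $i=k-1$ slice from the second, all common terms $\sum_{i \geq k,\ j \geq k+1} p_i p_j$ cancel, yielding the key identity
\[ s_k^2 \, - \, s_{k-1}\ts s_{k+1} \ = \ p_k\ts s_k \. - \. p_{k-1}\ts s_{k+1}. \]

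Next, I would reindex the right-hand side by shifting the summation variable in $p_{k-1}\ts s_{k+1} = \sum_{j \geq k} p_{k-1}\ts p_{j+1}$, giving
\[ p_k\ts s_k \. - \. p_{k-1}\ts s_{k+1} \ = \ \sum_{j \geq k} \big(\ts p_k\ts p_j \. - \. p_{k-1}\ts p_{j+1}\ts\big). \]
Thus it suffices to show \ts $p_k\ts p_j \geq p_{k-1}\ts p_{j+1}$ \ts for every \ts $j \geq k$. This is a standard consequence of the hypothesis: log-concavity says $p_i^2 \geq p_{i-1}\ts p_{i+1}$, which on the support of the sequence means that the consecutive ratios $p_{i+1}/p_i$ are non-increasing in~$i$. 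In particular $p_k/p_{k-1} \geq p_{j+1}/p_j$ for all $j \geq k-1$ on the support, and this rearranges to the desired termwise inequality.

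The only minor care is with vanishing entries. If $p_{k-1}=0$ the termwise inequality is trivial, and if $p_{k-1}>0$ but $p_k=0$, then $p_i^2 \geq p_{i-1}\ts p_{i+1}$ forces $p_j=0$ for all $j \geq k$, so both sides vanish; in all remaining cases one is strictly on the positive support and the ratio argument applies verbatim. I do not anticipate any substantive obstacle: the computation is elementary, and the main conceptual point is simply to recognize the telescoping identity that rewrites $s_k^2 - s_{k-1}\ts s_{k+1}$ as a single nonnegative sum.
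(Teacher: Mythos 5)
Your proof is correct and follows essentially the same path as the paper's. Both reduce $s_k^2 - s_{k-1}\ts s_{k+1}$ to the single cancellation identity $p_k\ts s_k - p_{k-1}\ts s_{k+1}$ (the paper phrases this as $\sum_{i=k}^{n-1} p_{k-1}\ts p_{i+1} \le \sum_{i=k}^{n} p_k\ts p_i$) and then apply the termwise bound $p_{k-1}\ts p_{i+1} \le p_k\ts p_i$ for $i \ge k$; you merely spell out the ratio argument and the degenerate zero cases which the paper leaves implicit.
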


\begin{proof}	After expanding and simplifying the terms, the log-concavity \ts $s_{k-1}\ts s_{k+1} \le s_k^2$ \ts
is equivalent to
	\[ \sum_{i=k}^{n-1} \. p_{k-1} \. p_{i+1}  \,  \leq  \, \sum_{i=k}^n \. p_k \. p_{i}.\]
	By the log-concavity of \ts $\{p_i\}$, we have \ts $p_{k-1} \. p_{i+1} \. \leq  \. p_k \. p_i$ \. for every \ts $i \geq k$.
	Thus the difference between the RHS and LHS of the inequality above is at least $p_{k} \. p_n\ge 0$.
\end{proof}

\smallskip

\begin{lemma}
Let \ts $\{p_1\ts, \ts \ldots \ts, p_n\}$ \ts be a log-concave sequence of
real numbers \ts $p_i \ge 0$. 	Then:
	\[ p_1 \. (p_2 + \ldots +p_{n-1} + p_n) \  \leq  \ p_2 \. (p_1+ \ldots +p_{n-1} +p_n ).\]
\end{lemma}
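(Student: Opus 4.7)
The plan is to deduce the inequality from the single pointwise bound
\[
p_1 \. p_{i+1} \ \leq \ p_2 \. p_i \qquad \text{for all \ } 1 \leq i \leq n-1,
\]
and then sum. Once this is in hand, summing over $i$ from $1$ to $n-1$ yields
\[
p_1 \. (p_2 + p_3 + \ldots + p_n) \ \leq \ p_2 \. (p_1 + p_2 + \ldots + p_{n-1}) \ \leq \ p_2 \. (p_1 + p_2 + \ldots + p_n),
\]
where the last step simply drops the nonnegative term $p_2 \. p_n$. This is exactly the conclusion of the lemma.

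To establish the pointwise bound, I would dispose of the degenerate cases first. If $p_1 = 0$ the whole left-hand side is zero and there is nothing to prove. If $p_1 > 0$ and $p_2 = 0$, then the log-concavity relation $p_{i-1} \. p_{i+1} \leq p_i^2$ forces $p_i = 0$ for every $i \geq 2$ by induction on $i$, and again the left-hand side is zero. So we may assume $p_1, p_2 > 0$, and by the same propagation argument the support of $\{p_i\}$ is an initial segment $\{1, 2, \ldots, m\}$ on which every $p_i$ is strictly positive.

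On this support, log-concavity is equivalent to the statement that the ratio $r_i := p_{i+1}/p_i$ is non-increasing in $i$, so $r_i \leq r_1 = p_2/p_1$ for every $i \geq 1$ in the support; clearing denominators gives $p_1 \. p_{i+1} \leq p_2 \. p_i$. For indices outside the support, both sides vanish and the inequality is trivial. The entire argument is elementary; the only real care required is the bookkeeping of the degenerate cases above, so there is no genuine obstacle.
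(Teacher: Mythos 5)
Your overall strategy (establish the pointwise bound $p_1\,p_{i+1}\le p_2\,p_i$ and sum) is exactly the paper's proof, which applies the same bound in the form $p_1 p_i \le p_2 p_{i-1}$ and observes that the difference of the two sides is at least $p_2 p_n\ge 0$. Where your writeup departs is the degenerate-case discussion, and that discussion contains a genuine error. You assert that if $p_1>0$ and $p_2=0$ then log-concavity forces $p_i=0$ for all $i\ge 2$ by induction, and more generally that the support of a nonnegative log-concave sequence is an initial segment. Neither is true: once two consecutive terms vanish, the constraint $p_{i-1}p_{i+1}\le p_i^2$ becomes vacuous and controls nothing downstream. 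For instance $(1,0,0,1)$ satisfies $p_{i-1}p_{i+1}\le p_i^2$ for $i=2,3$, so it is log-concave in the sense you are using, yet its support is $\{1,4\}$; your inductive step to show $p_4=0$ would have to use $p_2 p_4\le p_3^2$, which reads $0\le 0$ and yields nothing.

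In fact, read literally, the lemma is false under this convention: the sequence $(2,1,0,0,2)$ satisfies $p_{i-1}p_{i+1}\le p_i^2$ for $i=2,3,4$, yet $p_1(p_2+\cdots+p_5)=6>5=p_2(p_1+\cdots+p_5)$. The paper's one-line justification has exactly the same gap, so you have not misread the intended argument; you have simply tried, unsuccessfully, to patch a hole that the paper leaves unacknowledged. The standard fix is to build into the hypothesis that the sequence has no internal zeros (equivalently, connected support). That holds automatically in the application, where $p_k=\Pb[f(x)=k]$ is the distribution of the value of $x$ in a random linear extension, and under that hypothesis your ratio-monotonicity argument and the resulting pointwise bound are correct. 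So the idea is right, the summation is right, and you were right to worry about zeros; but the propagation argument does not work, and the clean resolution is to assume connected support rather than to try to derive it from log-concavity alone.
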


\begin{proof}
By the log-concavity of \ts $\{p_i\}$, we have \ts  \. $p_{1} \. p_{i} \. \leq  \. p_2 \. p_{i-1}$ \. for every \ts $i \geq 3$.
Thus, the difference between the RHS and LHS of the inequality in the lemma is at least \. $p_{2} \. p_n\ge 0$.
\end{proof}

\subsection{Proof of Proposition~\ref{p:log-Petrov}}\label{ss:cor-proof-log-lemma}
We prove the
proposition by analogy with the lemmas above, as a
polynomial inequality.\footnote{Our original proof of the lemma was more complicated.
The proof below was suggested to us by F\"edor~Petrov (Oct.~2022, personal communication). }

\smallskip

\begin{lemma}  \label{l:log-Petrov}
Let \ts $\{p_1\ts, \ts \ldots \ts, p_n\}$ \ts be a log-concave sequence of
real numbers \ts $p_i \ge 0$. Then:
$$
\big(p_1  \ts  + \ts  p_2 \ts  + \ts \ldots \ts + \ts p_n\big)
\big(1^2 p_1 \ts + \ts 2^2 p_2 \ts  + \ts \ldots \ts + \ts n^2 p_n \big) \. \le \. 2\ts
\big(1 \ts p_1 \ts  + \ts  2 \ts p_2 \ts + \ts \ldots \ts + \ts n \ts p_n \big)^2.
$$
\end{lemma}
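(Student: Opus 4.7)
The strategy is to restate the inequality in terms of the tail sums \. $U_k := \sum_{i \ge k} p_i$, and then derive a submultiplicative bound on $\{U_k\}$ from log-concavity. By Abel summation,
\[
 \sum_i p_i \, = \, U_1, \qquad  \sum_i i \ts p_i \, = \, \sum_{k \ge 1} U_k \, =: \, M_1\ts, \qquad  \sum_i i^2 p_i \, = \, \sum_{k \ge 1} (2k-1) \ts U_k\ts,
\]
so the desired inequality becomes \. $U_1 \sum_{k \ge 1}(2k-1) U_k \le 2\ts M_1^2$. Since $2k-1 \le 2k$, it is enough to prove the cleaner bound
\[
 U_1 \sum_{k \ge 1} k \ts U_k \ \le \ M_1^2\ts.
\]

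The next step is to invoke Lemma~\ref{l:poset-Stanley-one} to conclude that $\{U_k\}$ is itself log-concave. Assuming $U_1 > 0$ (else the inequality is trivial) and restricting to the support of $\{U_k\}$, the sequence \. $H_k := \log(U_1/U_k)$ \. is convex, non-decreasing, and satisfies $H_1 = 0$. Convexity together with $H_1 = 0$ yields the superadditivity \. $H_{k+l-1} \ge H_k + H_l$ \. for all $k, l \ge 1$: indeed, $H_{k+l-1}$ is the sum of $k+l-2$ non-decreasing consecutive differences $H_{j+1}-H_j$, which dominates the sum of the first $k-1$ plus the first $l-1$ such differences. Re-exponentiating gives the discrete submultiplicative bound
\[
 U_1 \. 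U_{k+l-1} \ \le \ U_k \. U_l \qquad \text{for all } k, l \ge 1\ts.
\]

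It remains to combine these ingredients. Writing \. $k = \big|\{(s, u) \in \Zb_{\ge 1}^2 \. : \. s + u = k + 1\}\big|$ \. and swapping the order of summation, we get
\[
 U_1 \sum_{k \ge 1} k \ts U_k \ = \ \sum_{s, u \ge 1} U_1 \ts U_{s + u - 1} \ \le \ \sum_{s, u \ge 1} U_s \ts U_u \ = \ \Big( \sum_{k \ge 1} U_k \Big)^{\!2} \ = \ M_1^2\ts,
\]
as required. The main conceptual step is the submultiplicative property of $\{U_k\}$, which is the discrete analogue of the classical fact that a decreasing log-concave function $G:[0,\infty)\to [0,1]$ with $G(0)=1$ satisfies \. $G(s+u)\le G(s)\ts G(u)$. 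Everything else is routine index manipulation and Abel summation.
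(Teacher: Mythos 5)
Your proof is correct, and it takes a genuinely different route from the paper's. The paper works directly with the polynomial identity: it writes $2\big(\sum_i i\ts p_i\big)^2 - \big(\sum_i p_i\big)\big(\sum_i i^2 p_i\big)$ as a sum $\sum_{k} \al_{k+1}$ over antidiagonals $i+j=k+1$, observes that within each $\al_{k+1}$ the coefficients of $p_a p_{k+1-a}$ go from negative to positive as $a$ grows while their total is positive, and then uses the log-concavity chain $p_1 p_k \le p_2 p_{k-1} \le \cdots$ to conclude that the positive terms dominate. You instead pass to the tail sums $U_k$, invoke Lemma~\ref{l:poset-Stanley-one} to transfer log-concavity from $\{p_i\}$ to $\{U_k\}$, derive the submultiplicative estimate $U_1\ts U_{k+l-1} \le U_k\ts U_l$ from the convexity of $H_k = \log(U_1/U_k)$, and then close with the clean double-sum manipulation $U_1 \sum_k k\ts U_k = \sum_{s,u} U_1 U_{s+u-1} \le \sum_{s,u} U_s U_u = M_1^2$. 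Your route isolates a structural property (the discrete analogue of $G(s+u) \le G(s)G(u)$ for a decreasing log-concave survival function), which is arguably more conceptual and reuses a lemma the paper already has; moreover the intermediate inequality $U_1 \sum_k k\ts U_k \le M_1^2$ you actually prove is slightly stronger than needed and is itself tight for the geometric distribution. The paper's proof is more elementary in that it needs no auxiliary lemma and works purely by coefficient bookkeeping, at the cost of being less transparent about \emph{why} the inequality holds. The small steps you gloss over (extending the submultiplicative bound outside the support of $\{U_k\}$, and the $(k-1)+(l-1)=k+l-2$ comparison of nondecreasing increments) are all sound.
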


\begin{proof}
Write the difference of the RHS and LHS as the sum \. $\al_2 + \al_3 + \ldots + \al_{2n}\ts$
over terms \ts $p_ip_j$ \ts with a fixed sum \ts $i+j$.  We have:
$$
\al_{k+1} \. = \. p_1 \ts p_k \ts \big(4k-1-k^2\big) \. + \.  p_2 \ts p_{k-1} \ts
\big(8(k-1)-2^2-(k-1)^2\big) \. + \. \ldots
$$
Observe that the coefficients in \ts $\al_{k+1}$ \ts start negative and become positive.
The total sum of the coefficients in \ts $\al_{k+1}$ \ts is equal to \.
$\frac{1}{3} \ts k(k+1)(k+2) \. - \. \frac{1}{6} \ts k(k+1)(2k+1) >0$.  By the log-concavity of \ts $\{p_i\}$, we have \ts $p_1p_k \le p_2p_{k-1} \le \ldots \ts$ We conclude that the
positive terms in the sum \ts $\al_{k+1}$ \ts dominate the negative terms, which implies
the result.
\end{proof}

\bigskip

\section{Final remarks} \label{s:finrem}

\subsection{}\label{ss:finrem-Stanley}
The geometric approach via the Alexandrov--Fenchel inequality was introduced
by Stanley \cite{Sta} and further explored in \cite{KS,RvH}.  More recently,
this approach was employed to study the equality conditions \cite{CP-AFequality,MS,SvH,vHYZ}.
The vanishing conditions in Stanley inequality (Theorem~\ref{t:poset-Stanley})
have purely combinatorial proofs, see \cite[Lem.~15.2]{SvH}.  See also generalizations
in \cite[Thm~8.5]{CPP2}, \cite[Thm~1.12]{CPP} and \cite[Thm~5.3]{MS}.  The same holds
for the uniqueness conditions \cite[Thm~7.5]{CPP}.

It remains open whether Stanley inequality \eqref{eq:Stanley-thm} has a combinatorial proof.
Formally, it is not known whether the difference of the RHS and the LHS is in~$\SP$, 
although a negative result of this type was recently obtained in \cite{CP-AFequality} 
for the generalized Stanley's inequality under standard complexity assumptions. 
This problem is similarly open for our new correlation inequalities, since the 
combinatorial atlas
technology is inherently ineffective.  This is in contrast with other poset
inequalities, see~\cite{CPP}.  We refer to the lengthy discussion of
these problems in \cite[$\S$6]{Pak-what}.

\subsection{}\label{ss:finrem-easy}
One can ask if the atlas technology we use largely as a black box in this paper
is really necessary to derive our results?  We certainly believe this to be the
case, since there are so few other tools.  For now, let us offer a word of caution
to the reader accepting the challenge:  while our inequalities can certainly appear
intuitively obvious at least in special cases, this impression is largely deceptive
in the generality of all finite posets.

For example, the proof of the upper bound of~\eqref{eq:poset-corr-tight} given in
Example~\ref{ex:poset-corr-upper-two} for the case of exactly two minimal elements,
is elementary indeed. Thus one might be tempted to extend this proof to general posets.
We hope you succeed, but keep in mind that the only proof we have is as a corollary
of a difficult Theorem~\ref{t:poset-corr-deletion-strong}.

This ``intuitively obvious'' phenomenon is both quite old and completely
understandable.  For example, \defn{Winkler's inequality} \ts \cite[Thm~1]{Win-ave} \ts
certainly appears that way:
\begin{equation}\label{eq:Win}
\Eb\big[f(x)\big] \, \le \, \Eb\big[f(x) \. \big| \. f(x) > f(y)\big] \quad
\text{for every incomparable} \ \ x, y \in X\ts.
\end{equation}
To this day, the only known proof of~\eqref{eq:Win} is the original proof
which uses the XYZ inequality.

\subsection{}\label{ss:finrem-1323}
The famous \. \defng{$\frac{1}{3}$--$\frac{2}{3}$ \. Conjecture} \. states that for every
finite poset \ts $P=(X,\prec)$ \ts there are elements \ts $x,y\in X$, such that
$$
\frac{1}{3} \ \le  \ \Pb\bigl[f(x)<f(y)\bigr] \ \le  \ \frac{2}{3}\,.
$$
This conjecture was stated independently by Kislitsyn (1968) and Fredman (1875). 
and studied in a long series of papers, see e.g.\ \cite[\S13]{CP23-survey} 
for a detailed overview of the literature.

As we mentioned in the introduction, a version of the conjecture with weaker
constants was first proved in \cite{KS} using the extension of Stanley
inequality~\eqref{eq:Stanley}.  See also 
\cite{BFT} for the currently best known bound.

\subsection{}\label{ss:finrem-CPC}
Let \. $P=(X, \prec)$ \. be a finite poset.  Fix distinct elements \. $x,y,z\in X$.
For \ts $k,\ell \geq 1$, denote
\[ \cF(k,\ell) \ := \ \{f \in \Ec(P) \. : \. f(y)-f(x)=k, \.  f(z)-f(y)=\ell \}, \]
and let \. $\aF(k,\ell) \. := \. \big|\cF(k,\ell)\big|$.
The \defng{cross-product conjecture} \ts (CPC)
by Brightwell--Felsner--Trotter \cite[Thm~3.2]{BFT} \ts states that
\begin{equation}\label{eq:CPC} \tag{CPC}
			 \aF(k,\ell) \.\cdot\. \aF(k+1,\ell+1) \ \leq \ \aF(k+1,\ell) \.\cdot\. \aF(k,\ell+1),
	\end{equation}
for every \ts $k,\ell \geq 1$.  The \ts $k=\ell=1$ \ts case was proved in
\cite[Thm~3.2]{BFT}, and the case of width two posets was proved in \cite{CPP1}.
This remains one of the most challenging open problems in the area.  
A constant ratio bound was recently given in~\cite{CPP-quant}.  
As Brightwell--Felsner--Trotter lamented, ``something more powerful seems to be needed''
to prove the general form of~\eqref{eq:CPC}.

\subsection{}\label{ss:finrem-Fish}
Let \ts $P=(X,\prec)$ \ts be a poset, and let \ts $A, B\ssu X$ \ts be upper ideals.
We denote by \. $e(A)$ \. the number of linear extensions of the poset \ts $(A,\prec)$.
 \defn{Fishburn's inequality} \cite{Fish1} states:
\begin{equation}\label{eq:poset-Fish}
	  \frac{|A \cup B|! \. \cdot \. |A \cap B|!}{|A|! \. \cdot \.  |B|!} \
\leq \ \frac{e(A \cup B) \.\cdot\. e(A\cap B)}{e(A) \.\cdot\. e(B)}\,.
\end{equation}
Note that this is a generalization of the lower bound in
\eqref{eq:poset-corr-tight}, obtained by taking \ts $A:=X-x$ \ts and \ts $B:=X-y$.
In a joint work with Panova, we recently rederive this special case in a
remark \cite[$\S$9.8]{CPP}.  Both the original proof in \cite[$\S$2]{Fish1}
and our proof use the FKG inequality.  We also note that Fishburn's inequality 
\eqref{eq:poset-Fish} is a special case of an inequality of Shepp~\cite[Thm~2]{She},
see e.g. \cite[Lemma~10]{Bri} for this derivation.

\subsection{}\label{ss:finrem-Bjo}
In the context of $\S$\ref{ss:app-Young}, Fishburn's inequality
for straight shapes was studied by Bj\"{o}rner \cite[$\S$6]{Bjo11},
as the following correlation inequality:
\begin{equation}\label{eq:Bjor}
|\SYT(\mu)| \. \cdot \. |\SYT(\nu)| \, \le \,
|\SYT(\mu\vee \nu)| \. \cdot \. |\SYT(\mu\wedge \nu)|\.,
\end{equation}
%
%
where $\vee$ and $\wedge$ refer to the union and intersection of the corresponding Young diagrams.

It was also pointed out it in \cite[$\S$7.4]{Pak-what}, that \eqref{eq:Bjor} is
also a corollary of the very general \defn{Lam--Pylyavskyy inequality}
\cite[Thm~4.5]{LP07}
which applies to skew shapes:
\begin{equation}\label{eq:Bjo-LPP}
|\SYT(\mu/\al)| \. \cdot \. |\SYT(\nu/\be)| \, \le \,
\big|\SYT\big(\mu\vee \nu \. / \.\al\vee \be\big)\big| \. \cdot \. \big|\SYT\big(\mu\wedge \nu \. / \. \al\wedge \be\big)\big|\ts.
\end{equation}
%
This result was reproved and further extended in \cite{LPP07} by an algebraic argument.
In a followup paper \cite{CP3}, we show how  \eqref{eq:Bjo-LPP} and its generalizations
can be proved via the Ahlswede--Daykin inequality~\cite{AD}.

\subsection{}\label{ss:finrem-HW}
In the context of $\S$\ref{ss:SYT-hook-walk}, recall that there is no analogue of the
hook walk for skew Young diagrams, or in fact any direct combinatorial way to sample
from \ts $\SYT(\la/\mu)$, see e.g.\ \cite[$\S$10.3]{MPP1}.
It would be interesting to find a direct combinatorial proof \eqref{eq:poset-corr-tight}
in this case.

\subsection{}\label{ss:finrem-reverse}
In conditions of Theorem~\ref{t:poset-corr-deletion}, when \ts $x\in \min(P)$ \ts
and \ts $y\in \max(P)$, the lower bound in \eqref{eq:poset-corr-tight} reverses
direction:
\begin{equation}\label{eq:poset-corr-reverse}
	 \frac{e(P) \.\cdot \. e(P-x-y)}{e(P-x) \.\cdot \. e(P-y)}  \ \leq \ \frac{n}{n-1}\.,
\end{equation}
see \cite[$\S$9.8]{CPP}.  It would be interesting to find a lower bound for the
LHS of~\eqref{eq:poset-corr-reverse} similar to the RHS of \eqref{eq:poset-corr-tight}.
For example, let \ts $P=C_{n-2}+C_2$ \ts be a parallel sum (disjoint union) of two chains,
and let \ts $C_2=\{x,y \. : \. x \prec y\}$. Then the LHS
of~\eqref{eq:poset-corr-reverse} is equal to \. $\frac{n}{2(n-1)}$\..
We conjecture that this lower bound is tight:

\begin{conj}\label{conj:corr-reverse}  Let \ts $x\in \min(P)$ \ts
and \ts $y\in \max(P)$.  We have:
\begin{equation*}
	 \frac{e(P) \.\cdot \. e(P-x-y)}{e(P-x) \.\cdot \. e(P-y)}  \ \geq \ \frac{1}{2}\..
\end{equation*}
\end{conj}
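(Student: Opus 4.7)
My plan is to adopt the combinatorial atlas framework used in the proof of Theorem~\ref{t:poset-corr-deletion}. Let $P' := P + \{\pa\}$ be the parallel sum with a fresh element $\pa$, and fix any $k \in \{2,\ldots,n\}$. Form the matrix $\bM := \bM(P',\pa,k)$ from Proposition~\ref{prop:M}, and take $\xb$ to be the characteristic vector of $\{x\} \subseteq Z_{\down}$ (valid since $x \in \min(P) \subseteq \min(P')$), $\yb$ the characteristic vector of $\{y\} \subseteq Z_{\up}$ (valid since $y \in \max(P) \subseteq \max(P')$), and $\zb$ the all-ones vector on $Z$. Using \eqref{eq:M-sum-min}, \eqref{eq:M-sum-max} and the identification $\aN^{\langle P'\rangle}_j = \aN^{\langle P\rangle}$ (valid for every $j$ because $\pa$ is parallel-adjoined), a short computation yields
\[
\bM_{\xb\!\yb} = e(P{-}x{-}y), \quad \bM_{\xb\!\zb} = 2 \. e(P{-}x), \quad \bM_{\yb\!\zb} = 2 \. e(P{-}y), \quad \bM_{\zb\!\zb} = 4 \. e(P).
\]
Under this translation, Conjecture~\ref{conj:corr-reverse} becomes equivalent to the matrix inequality $\bM_{\zb\!\zb} \. \bM_{\xb\!\yb} \geq \tfrac{1}{2} \. \bM_{\xb\!\zb} \. \bM_{\yb\!\zb}$.

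The main obstacle is that Lemma~\ref{lem:atlas-corr-half} gives precisely the opposite inequality $\bM_{\zb\!\zb}\bM_{\xb\!\yb} \leq 2 \. \bM_{\xb\!\zb}\bM_{\yb\!\zb}$, which only translates to a weakened version of the known upper bound \eqref{eq:poset-corr-reverse}. Hyperbolicity \eqref{eq:Hyp} by itself is insufficient: for abstract nonnegative symmetric matrices with a single positive eigenvalue, the ratio $\bM_{\xb\!\yb} \bM_{\zb\!\zb} / (\bM_{\xb\!\zb}\bM_{\yb\!\zb})$ can be made arbitrarily small. The atlas inequalities of Section~\ref{s:atlas} are symmetric in their treatment of $\xb, \yb, \zb$ and therefore naturally deliver Cauchy--Schwarz-type bounds in one direction only, so a genuinely new ingredient specific to $\bM(P',\pa,k)$ is required.

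The most promising route is to combine the Shephard-type determinantal inequality \eqref{eq:Shephard} with explicit lower bounds on the diagonal entries $\bM_{\xb\!\xb}$ and $\bM_{\yb\!\yb}$, which by \eqref{eq:M-diag-comb} count linear extensions in which the element at position~$2$ covers~$x$, respectively in which the element at position~$n-1$ is covered by~$y$. If one can establish a bound of the form $\bM_{\xb\!\xb} \. \bM_{\yb\!\yb} \geq c \. \bM_{\xb\!\yb}^2$ with $c$ bounded away from~$0$ independently of~$P$, then expanding \eqref{eq:Shephard} pins down $\bM_{\zb\!\zb} \bM_{\xb\!\yb}$ from below; a tractable test case is when $x$ has a unique cover and $y$ is uniquely covered, where the diagonals simplify as in Corollary~\ref{c:poset-disjoint-cor}. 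Failing this, one can attempt a direct combinatorial injection $\Phi: \Ec(P{-}x) \times \Ec(P{-}y) \hookrightarrow \{0,1\} \times \Ec(P) \times \Ec(P{-}x{-}y)$ that sends $(f_1,f_2)$ to $(b,g,h)$ with $g := x \cdot f_1$ (prepending~$x$) and $h$ obtained by deleting~$x$ from~$f_2$, where the binary bit encodes the position of~$x$ lost inside~$f_2$; making this globally consistent appears to require a subtle swap argument. The asymptotic tightness $n/(2(n-1))$ at $P = C_{n-2}+C_2$ strongly suggests that any successful proof must exploit the combinatorial structure of pairs $(x,y)$ jointly trapped inside a short chain of~$P$.
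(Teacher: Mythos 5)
The statement you were asked to prove is not a theorem in the paper: it is Conjecture~\ref{conj:corr-reverse}, which the authors explicitly flag as open (in~\S\ref{ss:finrem-conj} they write that it is ``very speculative'' and that ``it would be interesting to check [it] computationally''). There is no proof in the paper to compare against, and your proposal---to its credit---does not actually claim to contain one either. Your translation into the atlas language is correct: taking $P' = P + \{\pa\}$, $\bM = \bM(P',\pa,k)$, and the indicated vectors $\xb,\yb,\zb$, a routine use of \eqref{eq:M-sum-min}, \eqref{eq:M-sum-max}, and the fact that $\pa$ is adjoined in parallel indeed gives $\bM_{\xb\!\yb}=e(P{-}x{-}y)$, $\bM_{\xb\!\zb}=2e(P{-}x)$, $\bM_{\yb\!\zb}=2e(P{-}y)$, $\bM_{\zb\!\zb}=4e(P)$, so the conjecture is equivalent to the lower bound $\bM_{\zb\!\zb}\bM_{\xb\!\yb}\geq\tfrac12\bM_{\xb\!\zb}\bM_{\yb\!\zb}$. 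You have also correctly identified the central obstruction: Lemma~\ref{lem:atlas-corr-half} and the other hyperbolicity consequences in Section~\ref{s:atlas} all push the inequality the wrong way, and \eqref{eq:Hyp} by itself places no lower bound on that ratio, since for abstract hyperbolic matrices it can be made arbitrarily small. So the atlas black box, as packaged in this paper, cannot close the gap; something structure-specific to $\bM(P',\pa,k)$ is needed.

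That said, be clear with yourself that this remains a gap, not a proof. Neither of your two suggested remedies is carried out: the diagonal-bound route would require establishing $\bM_{\xb\!\xb}\bM_{\yb\!\yb}\geq c\,\bM_{\xb\!\yb}^2$ uniformly in~$P$, and you give no argument that such a universal $c>0$ exists (note that $\bM_{\xb\!\xb}$ can vanish, e.g.\ when $x$ has no cover at all in $P$ because $x$ is the unique minimal element whose removal leaves position~$2$ forced---so any such bound would have to handle degenerate diagonals carefully). The injection route is likewise only sketched, and you yourself note that making it ``globally consistent appears to require a subtle swap argument.'' In short: your diagnosis of why the existing machinery fails is accurate and matches the authors' own assessment that this is an open problem, but you have not supplied the missing ingredient, and the conjecture remains unproven both in the paper and in your proposal.
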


\subsection{}\label{ss:finrem-equality}
The combinatorial atlas technology does prove the equality conditions
in some cases, such as for Stanley inequality \eqref{eq:Stanley-thm},
see \cite[$\S$1.18]{CP1}.  While this paper does not explore the equality
conditions for any of our correlation inequalities, we believe this is an
important direction to pursue.  Finding equality conditions for both
inequalities in Theorem~\ref{t:poset-corr-deletion} would be especially interesting,
as would be the equality conditions for Corollary~\ref{c:Stanley-poset-two} and for
Theorem~\ref{t:poset-disjoint-logconcave}.

\subsection{}\label{ss:finrem-conj}
Conjectures~\ref{conj:poset-Stanley-subset} and~\ref{conj:corr-reverse} 
are very speculative.  It would be interesting to check them computationally
for sufficiently large posets.

\vskip.6cm

\subsection*{Acknowledgements}
We are grateful to Pasha Galashin, Nikita Gladkov, Alejandro Morales, Greta Panova,
F\"edor Petrov, Michael Saks, Yair Shenfeld  and Ramon van Handel for helpful 
discussions and remarks on the subject.  We are especially thankful to Max Aires 
and Jeff Kahn for showing us Example~\ref{ex:poset-AK}.  The first author was 
partially supported by the Simons Foundation.  Both authors were partially 
supported by the~NSF.

\vskip.9cm


\bigskip

\end{document}